\newcommand{\tabcaption}{\def\@captype{table}\caption}
\newtheorem{lem}{Lemma}[section]
\newtheorem{thm}{Theorem}[section]
\newtheorem{propo}{Proposition}[section]
\newtheorem{rem}{Remark}[section]
\numberwithin{equation}{section}
\title{Spatial second-order positive and asymptotic preserving filtered $P_N$ schemes for nonlinear radiative transfer equations}
\author[a,b]{Xiaojing Xu}
\author[c]{Song Jiang}
\author[c,d]{Wenjun Sun 
}
\affil[a]{\scriptsize School of Mathematics and Information Science, Guangxi University, Nanning 530004, Guangxi,
China}
\affil[b]{\scriptsize School of Mathematics and Physics,
Southwest University of Science and Technology, Mianyang 621010, Sichuan, China
 }
\affil[c]{\scriptsize Institute of Applied Physics and Computational Mathematics, Fenghao East Road 2, Beijing 100094, China }
\affil[d]{\scriptsize Center for Applied Physics and Technology, College of Engineering, Peking University, Beijing 100871, China }
\begin{document}
\setlength{\arraycolsep}{0.5mm}


\date{}
\maketitle

\vskip -10mm
{\small\qquad  E-mail : xuxiaojing0603@126.com \ \  jiang@iapcm.ac.cn \ \  sun$\_$wenjun@iapcm.ac.cn }

\begin{abstract}
A spatial second-order scheme for the nonlinear radiative transfer equations is introduced in this paper. The discretization scheme is based on the
filtered spherical harmonics ($FP_N$) method for the angular variable and the unified gas kinetic scheme (UGKS) framework
for the spatial and temporal variables respectively. In order to keep the scheme positive and second-order accuracy,
firstly, we use the implicit Monte Carlo (IMC) linearization method \cite{IMC-1971} in the construction of the UGKS
numerical boundary fluxes. This is an essential point in the construction.
 Then, by carefully analyzing the constructed second-order fluxes involved in the macro-micro decomposition, which is induced
 by the $FP_N$ angular discretization, we establish the sufficient conditions that guarantee the
 positivity of the radiative energy density and material temperature. Finally, we employ linear scaling limiters for the angular variable
 in the $P_N$ reconstruction and for the spatial variable in the piecewise linear slopes reconstruction respectively,
 which are shown to be realizable and reasonable to enforce the sufficient conditions holding. Thus, the desired scheme,
 called the $PPFP_N$-based UGKS, is obtained.
Furthermore, we can show that in the regime $\epsilon\ll 1$ and the regime $\epsilon=O(1)$, the second-order fluxes can be simplified.
And, a simplified spatial second-order scheme, called the $PPFP_N$-based SUGKS, is thus presented, which possesses all the
properties of the non-simplified one. Inheriting the merit of UGKS, the proposed schemes are asymptotic preserving.
By employing the $FP_N$ method for the angular variable, the proposed schemes are almost free of ray effects.
Moreover, the above-mentioned way of imposing the positivity would not destroy both AP and second-order accuracy properties.
To our best knowledge, this is the first time that spatial second-order, positive, asymptotic preserving and almost free of ray effects
schemes are constructed for the nonlinear radiative transfer equations without operator splitting.
Therefore, this paper improves our previous work on the first-order scheme \cite{Xu-Jiang-Sun-2021}
which could not be directly extended to high order, while keeping the solution positive.
Various numerical experiments are included to validate the properties of the proposed schemes.

\vskip 0.2cm {\bf Keywords:}
Nonlinear gray radiative transfer equations, unified gas kinetic scheme,
 spatial second-order accuracy, positive preserving,
asymptotic preserving, ray effects

{\bf MSC2010:} 65-02, 65M08, 65M12, 35Q79, 85A02, 80M35

\end{abstract}

\section{Introduction}
The numerical simulation of the radiative transfer equations has a wide spectrum of applications in science and technology, such as
in astrophysics and inertial confinement fusion research. From the view point of numerical simulation, the radiative transfer equations
result in several difficulties, for example, the integro-differential form and high-dimensionality (time, spatial and angular variables).
In particular, the multi-scale features characterized by wide-ranging optical thickness of background
materials induce in numerical challenges.
Indeed, the optical thickness of background materials has a great impact on the behavior of radiation transfer.
For a material with low opacity, the radiation propagates in a transparent way, while for a material with high opacity, the
 radiation behaves like a diffusion process.
 In order to resolve the kinetic-scale-based radiative transfer equations in numerical simulation,
the spatial mesh size in many numerical methods usually should be comparable to the photon's mean-free path, which is very small in the optically thick regions, leading to huge computational costs.

The other numerical challenges include preserving positivity and eliminating ray-effects. Namely,
the positivity of the exact solution to the radiation transfer equations (i.e., the radiation intensity and the material temperature)
should be preserved numerically. The appearance of negative numerical solution may cause the breakdown or instability of the numerical simulation.
The ray effects caused by the usual discrete ordinate ($S_N$) method would destroy the symmetry when it is used to solve problems involving
isolated sources within optically thin media.

In the recent years, the numerical simulation of the radiative transfer equations has been developed rapidly.
An efficient approach to cope with the above-mentioned difficulties is the multi-scale numerical scheme
that is asymptotic preserving (AP), positive preserving (PP) and free of or less susceptible to ray effects,
whose construction  remains a challenging issue even now.
In fact,
the  schemes  with  AP property alone have been studied extensively in the literature \cite{ S.Jin-AP-a-class-2010, S.Jin-AP-a-review-2010,   E.W. Larsen-1987,   A.W. Larsen-1989,  L. Mieussens-linear kinetic-2013, T.Xiong-AP-2022}, including micro-macro decomposition, even-odd decomposition, implicit-explicit (IMEX) method
and so on.
There are also numerical schemes only focusing on the positive preserving property, such as in
 \cite{positive-shceme-1994, Laiu-positive-FPN-2016, Ling-Dan-PP-DG-2018, positive-scheme-1997, Da-Ming-Yuan-PP-DG-2016},
 the positive solution can be obtained through using a linear scaling limiter, or a flux limiter, or adopting special function
spaces, etc. On the other hand, in order to mitigate the ray effects of the $S_N$ method, alternative angular discretization methods,
such as the angular finite element method\cite{angular-FEM-1,angular-FEM-2}, wavelets method\cite{wavelet-method}, angular adaptivity  method\cite{adaptive-2015, angular-adaptive}, spherical harmonic ($P_N$) method\cite{PN-M.Benassi-1984} have been proposed.
Besides, the numerical schemes developed in \cite{PP-AP-scheme-2020, Jingwei-Hu-2019, Xu-Jiang-Sun-2022} are both asymptotic preserving
and positive preserving, while the schemes in \cite{Laiu-positivity-limiter-FPN-2019, Laiu-positive-FPN-2016} are positive and ray effects
free as well.

The original unified gas kinetic scheme (UGKS) for continuum and rarefied flows in \cite{Xu Kun-UGKS-2010}, which has been adopted to
 the linear radiation equation \cite{L. Mieussens-linear kinetic-2013} and then extended to the nonlinear (frequency-dependent)
 radiative transfer equations
 (\cite{Sun-Jiang-Xu-2015,Sun-Jiang-Xu-2015-frequency-dependent, Sun-Jiang-Xu-2018-implicit-frequency-dependent,
 Sun-Jiang-Xu-2017-equilibriun-and-non,Sun-Jiang-Xu-2017-unstructed mesh}),
 is proved to be asymptotic preserving. Numerical results, however, show that it is not positive preserving.
Moreover, the aforementioned UGKS may suffer from ray effects, as it is based on the $S_N$ method for the angular variable discretization.
In order to mitigate the ray effects, we have extended the angular $S_N$ method to the angular finite element and the filtered spherical
harmonics ($FP_N$) approximation in our previous works \cite{Xu-Sun-Jiang-2020, Xu-Jiang-Sun-2021}.
Due to the rotational invariance of the $FP_N$ method, the scheme in \cite{Xu-Jiang-Sun-2021} is almost free of ray effects.
Remind that although the filtering technique \cite{McClarren-Hauck-FPN-2010-2, Radice-FPN(new-framework)-2013} can suppress spurious
oscillations in the $P_N$ approximation, the negative solution of the $P_N$ approximation \cite{Pn-negative} may still occur.
Thus, we have studied the PP property in \cite{Xu-Jiang-Sun-2021} as well. Different from the positive $FP_N$ schemes in
\cite{Laiu-positivity-limiter-FPN-2019, Laiu-positive-FPN-2016} where the PP property was achieved by imposing a positivity limiter,
or by a positive moment closure that needs solving an additional expensive optimization problem, the PP property
in \cite{Xu-Jiang-Sun-2021} is obtained by giving the sufficient conditions to guarantee the positivity of the radiative energy
density and material temperature, while the sufficient conditions are enforced through a much cheap linear scaling limiter.
As far as we know, the scheme in \cite{Xu-Jiang-Sun-2021} is almost the only one that possesses all the
mentioned properties for the nonlinear radiative transfer equations; but unfortunately, the accuracy in the spatial variable
is only of first order. We remark that the scheme in \cite{Laiu-AP-PP-FPN-2019} also possesses all the above
desired properties, but is for the linear radiative transfer equation. To obtain the PP property
with spatial second-order accuracy, the authors in \cite{Xu-Jiang-Sun-2022} constructed both PP and AP-UGKS schemes for
{\it linear} radiation transfer equation only, based on the framework of UGKS and the angular $S_N$ method,
thus suffering from the ray effects.

In this paper, we shall extend our  previous PP and AP $FP_N$-based UGKS for the nonlinear gray radiative transfer equations \cite{Xu-Jiang-Sun-2021} from spatial first-order accuracy to the second-order. Compared with the linear radiative transfer equation
in \cite{Xu-Jiang-Sun-2022}, for the case of the nonlinear gray radiative transfer equations, in order to achieve all the desired properties,
particularly the positive preserving property, the extension is not trivial, and some additional techniques have to be introduced,
for example, firstly, the linearized method of the implicit Monte Carlo (IMC) \cite{IMC-1971} is used here to construct
the numerical boundary fluxes. Then, due to the requirement of piecewise linear reconstruction to get the spatial second-order accuracy,
which brings technical difficulties in finding the sufficient conditions to preserve the positivity of the numerical solutions.
Such difficulties have to be carefully dealt with. Finally, to make the positivity preserving conditions hold, we apply
an additional linear scaling limiter to modify the reconstructed slopes,
while for the first-order scheme in \cite{Xu-Jiang-Sun-2021}, such a limiter is not needed.

As in \cite{Xu-Jiang-Sun-2022}, by a detailed analysis of the numerical boundary fluxes,
a simplification method of the fluxes in regimes $\epsilon\ll 1$ and $\epsilon=O(1)$
is presented to minimize the computational costs but without sacrificing the second-order accuracy.
To our best knowledge, there seems no scheme for the nonlinear radiative transfer equations without operator splitting in the literature
that is of spatial second-order accuracy, positive and asymptotic preserving, and almost free of ray effects.

The remainder of this paper is organized as follows. In Section 2 we introduce the nonlinear gray radiative transfer equations, while
in Section 3 we present the construction of a spatial second-order $FP_N$-based UGKS. Through a detailed analysis in Section 4,
we find the sufficient conditions to preserve the positivity of the $FP_N$-based UGKS and impose these conditions by using linear scaling
limiters to obtain the positive UGKS. We then analyze the asymptotic preserving property of this positive scheme, and consequently,
a positive and asymptotic preserving filtered spherical harmonics method ($PPFP_N$-based UGKS) is hence proposed in this paper.
In Section 5, we show that in the regime $\epsilon\ll 1$ and the regime $\epsilon=O(1)$, the boundary fluxes can be simplified.
Therefore, a simplified $PPFP_N$-based UGKS ($PPFP_N$-based SUGKS) is obtained in these regimes.
Section 6 presents a number of numerical tests to validate the current schemes. Finally, a conclusion is given in Section 7.

\section{Model problems}

We consider the following nonlinear radiative transfer equations, which are given in the scaled form:
\begin{equation}\label{Gray Radiative transfer equaitons}
\left\{\begin{array}{ll}
\displaystyle{ \frac{\epsilon^2}{c}\frac{\partial I}{\partial t}+
\epsilon {\bm \Omega}\cdot\nabla_{\bf r} I=\sigma\Big( \frac{1}{4\pi}ac T^4-I\Big),}\\[3mm]
\displaystyle{ \epsilon^2C_{\nu}\frac{\partial T}{\partial t}
=\sigma\Big( \int_{S^2} I d {\bm \Omega}-acT^4\Big).}
\end{array}\right.
\end{equation}
The equations describe the radiative transfer and the energy exchange between radiation and background materials.
Here ${\bm r}$ denotes the scaled space variable $(x,y,z)$ in ${\mathbb R}^3$,
$t$ is the scaled time variable, ${\bm \Omega}$ is the angular variable $(\xi,\eta,\zeta)$ in $S^2$
(the unit sphere in ${\mathbb R}^3$). Functions $I({\bm r},{\bm \Omega},t)$ and $T({\bm r},t)$
are the radiation intensity and material temperature, respectively.
$\sigma({\bm r}, T)$ is the scaled opacity, the radiation constant $a$ and speed of light $c$ also are scaled,
$\epsilon>0$ is the Knudsen number, and $C_{\nu}({\bm r},t)$ is the scaled heat capacity.

 Equations \eqref{Gray Radiative transfer equaitons} are a relaxation model for the radiation intensity
to the local thermodynamic equilibrium, in which the emission source is a Planckian at local material temperature.
It has been shown in \cite{E.W. Larsen-Asymptotic analysis-1983} that as the parameter $\epsilon\rightarrow 0$, away from
boundaries and initial layers, the intensity $I$ approaches to a Planckian at local temperature $T^{0}$,
with $T^{0}$ satisfies a nonlinear diffusion equation, i.e.,
\begin{equation}\label{nonlinear diffusion limiting equation}
 \qquad C_{\nu}\frac{\partial}{\partial t}  T^{(0)} + a \frac{\partial}{\partial t}(T^{(0)})^4 = \nabla_{\bf r} \cdot \frac{ac}{3\sigma}\nabla_{\bf r} (T^{(0)})^4,\qquad
 I^{(0)}=\frac{1}{4\pi}ac(T^{(0)})^4.
\end{equation}

In the following, we denote $\phi=acT^4 $, and rewrite \eqref{Gray Radiative transfer equaitons} as
 \begin{equation}\label{Rewritten Gray Radiative transfer equaitons}
\left\{\begin{array}{ll}
\displaystyle{ \frac{\epsilon^2}{c}\frac{\partial I}{\partial t}+
\epsilon {\bm \Omega}\cdot\nabla_{\bf r} I=\sigma\Big( \frac{1}{4\pi}\phi-I\Big),}\\[3mm]
\displaystyle{ \epsilon^2 \frac{\partial \phi}{\partial t}=\beta\sigma(\int_{S^2} I d {\bm \Omega} - \phi),}
\end{array}\right.
\end{equation}
where $\beta\equiv \beta (x,t)= 4acT^3/C_{\nu}$.

\section{A spatial second-order  $FP_N$-based UGKS }
\label{section: A FPN-based UGKS}

 In this section,  we first revisit the filtered spherical harmonics ($FP_N$) discretization for the angular variable,
 and unified gas kinetic scheme (UGKS) for the spatial and temporal variables in \cite{Xu-Jiang-Sun-2021}. And then,
 the construction of the spatial second-order fluxes is introduced in details, which is the crucial step to get the final positive
 preserving scheme. As a consequence, a spatial second-order $FP_N$-based UGKS is constructed for the equations
 \eqref{Rewritten Gray Radiative transfer equaitons}.

\subsection{Discretization of the angular, spatial and temporal variables}

Denote  by  $\psi_{\ell}^m(\bm \Omega)$ ($\ell\geq 0$, $-\ell\leq m \leq \ell $) the real normalized spherical harmonic
function of degree $\ell$ and order $m$ that satisfies
 ${\bm\langle}\psi_{\ell}^m\psi_{\ell'}^{m'}  {\bm\rangle}=\delta_{\ell,\ell'}\delta_{m,m'},$
with ${\bm\langle} \cdot {\bm\rangle}=\int_{S^2} \cdot d{\bm \Omega}$ and $\delta_{\ell,\ell'}$ the Kronecker delta  function.
For any nonnegative integer $N$, let
$$\vec{\bm \psi}=(\psi^0_0,~ \breve{\vec{{\bm \psi}}}' )',  \qquad \qquad
\vec{\bm I}=(I^0_{0},~\breve{\vec{\bm I}}')',$$
with $\breve{\vec{{\bm \psi}}}= ( \psi_{1}^{-1},\psi_{1}^{0}, \psi_{1}^{1},...,\psi_{N}^{N})'$  the  spherical harmonic vector
and
$I^0_{0}={\bm\langle}\psi^0_0I {\bm\rangle}$, $\breve{\vec{\bm I}}={\bm\langle}\breve{\vec{{\bm \psi}}}I {\bm\rangle}$  the corresponding
moments, here the upper script $'$ is the transpose operation.
Moreover, denoting $\rho=\int_{S^2} I d {\bm \Omega}$ and noting that $I^0_0 = \frac{1}{2\sqrt{\pi}} \rho$, one has
 \begin{equation}\label{decomposition-of-I}
 \vec{\bm I} =(\frac{1}{2\sqrt{\pi}} \rho, ~\breve{\vec{\bm I}}' )'.
 \end{equation}

With the  $FP_N$ angular discretization and macro-micro decomposition  as in \cite{Xu-Jiang-Sun-2021}, we can obtain
\begin{equation}\label{macro equation}
\left\{\begin{array}{ll}
\displaystyle{\frac{\epsilon^2}{c}\frac{\partial \rho}{\partial t}+
\epsilon \nabla_{\bf r}\cdot \langle {\bm \Omega} I\rangle=\sigma(\phi-\rho),}\\[3mm]
\displaystyle{\epsilon^2 \frac{\partial \phi}{\partial t}=\beta\sigma(\rho- \phi), }
\end{array}\right.
\end{equation}
and
\begin{equation}\label{micro equation}
\displaystyle{ \frac{\epsilon^2}{c}\frac{\partial \breve{\vec{\bm I}}}{\partial t}+
\epsilon \nabla_{\bf r}\cdot {\bm\langle} \breve{\vec{\bm \psi}}{\bm \Omega} I{\bm\rangle} = - \sigma \breve{\vec{\bm I}} - \frac{\epsilon^2}{c} \sigma_f \breve{{\bf F}}\breve{\vec{\bm I}},}
\end{equation}
which are referred to as the macro and micro equations, respectively.
Here
$\nabla_{\bf r}\cdot {\bm\langle} \vec{\bm \psi}{\bm \Omega} I{\bm\rangle}
= \partial_x {\bm\langle} \vec{\bm \psi} \xi I {\bm\rangle}
+ \partial_y {\bm\langle} \vec{\bm \psi} \eta I {\bm\rangle}
+ \partial_z {\bm\langle} \vec{\bm \psi} \zeta I {\bm\rangle}.$
$\sigma_f\geq0$ is a filter parameter,
$\breve{{\bf F}}$ is a diagonal filtering matrix with elements $\breve{{\bf F}}_{(\ell,m),(\ell,m)} = -\text{ln}(f(\frac{\ell}{N+1}))$,
and $f:{\mathbb R}^+\rightarrow [0,1]$ is a filter function with $f(0)=1$.

To discretize the spatial and temporal variables of \eqref{macro equation}-\eqref{micro equation}, similar to that in
\cite{Xu-Jiang-Sun-2021}, only the two-dimensional Cartesian spatial case is considered. Therefore, one has
${\bf r}=(x,y)$, ${\bm \Omega}=(\xi,\eta)$ and
$\nabla_{\bf r}\cdot {\bm\langle} \vec{\bm \psi}{\bm \Omega} I{\bm\rangle}
= \partial_x {\bm\langle} \vec{\bm \psi} \xi I {\bm\rangle}
+ \partial_y {\bm\langle} \vec{\bm \psi} \eta I {\bm\rangle}$
in the 2-D setting. And also a
uniform partition of the space and time variables is considered for simplicity. Denote by $\Delta x, \Delta y, \Delta t$
the mesh sizes and by $x_i, y_j, t_n$ ($i,j,n\in \mathbb{Z}$) the corresponding mesh nodes.
 Let $(i,j)$ be the cell $\{(x,y): x_{i-1/2}< x < x_{i+1/2}, ~  y_{j-1/2} < y < y_{j+1/2}\}$
with $x_{i\pm1/2}=(x_i+x_{i\pm1})/2$, $y_{j\pm1/2}=(y_j+y_{j\pm1})/2$ being the cell interfaces of $(i,j)$.
  For any function $\varphi(x,y,t)$, denote
  $$ \varphi_{i,j}^n
  =\frac{1}{\Delta x \Delta y}\int_{y_{j-\frac{1}{2}}}^{y_{j+\frac{1}{2}}}
  \int_{x_{i-\frac{1}{2}}}^{x_{i+\frac{1}{2}}} \varphi(x,y,t_n)dx dy. $$
Then, from \cite{Xu-Jiang-Sun-2021}, one has the  following conservative finite volume scheme:
\begin{equation}\label{discrete macro equation}
\left\{\begin{array}{ll}
\rho^{n+1}_{i,j} =\rho^{n}_{i,j} +\frac{\Delta t}{\Delta x} \left( \Phi^{n+1}_{i-1/2,j}-\Phi^{n+1}_{i+1/2,j} \right) +\frac{\Delta t}{\Delta y}
\left(\Upsilon^{n+1}_{i,j-1/2}-\Upsilon^{n+1}_{i,j+1/2} \right) \\[3mm]
\qquad\qquad  +\frac{\sigma_{i,j}^{n+1} c \Delta t}{\epsilon^2}( \phi^{n+1}_{i,j}-\rho^{n+1}_{i,j}) ,\\[4mm]
\displaystyle{\phi_{i,j}^{n+1}=\phi_{i,j}^{n}+ \frac{\beta^{n+1}_{i,j}\sigma^{n+1}_{i,j}\Delta t}{\epsilon^2}}
\left( \displaystyle{\rho^{n+1}_{i,j}-\phi^{n+1}_{i,j} }\right),
\end{array}\right.
\end{equation}
and
\begin{eqnarray}\label{discrete micro equation}\nonumber
\breve{\vec{\bm I}}^{n+1}_{i,j}  &=& \breve{\vec{\bm I}}^{n}_{i,j} +\frac{\Delta t}{\Delta x}
\left( \vec{\bm G}_{i-1/2,j}-\vec{\bm G}_{i+1/2,j} \right) + \frac{\Delta t}{\Delta y} \left( \vec{\bm H}_{i,j-1/2}-\vec{\bm H}_{i,j+1/2} \right) \\
& & - \frac{\sigma_{i,j}^{n+1}c \Delta t}{\epsilon^2} \breve{\vec{\bm I}}^{n+1}_{i,j}
-\Delta t \sigma_f \breve{{\bf F}} \breve{\vec{\bm I}}^{n+1}_{i,j},
\end{eqnarray}
where the interface fluxes are given  by
{\small
\begin{flalign}\label{definition of fluxes}
\hspace{-1mm}
\begin{split}
\Phi^{n+1}_{i-1/2,j}=
\frac{c}{\epsilon \Delta t}
\int_{t_n}^{t_{n+1}}\hspace{-1.5mm}{\bm\langle}\xi I(x_{i-\frac{1}{2}},y_j,{\bm \Omega},t) {\bm\rangle} dt, ~~
\vec{\bm G}_{i-1/2,j}=
\frac{c}{\epsilon \Delta t}
\int_{t_n}^{t_{n+1}} \hspace{-1.5mm}{\bm\langle}\xi \breve{\vec{\bm \psi}} I(x_{i-\frac{1}{2}},y_j,{\bm \Omega},t) {\bm\rangle} dt,\\
\Upsilon^{n+1}_{i,j-1/2}=
\frac{c}{\epsilon \Delta t}
\int_{t_n}^{t_{n+1}}\hspace{-1.5mm}{\bm\langle}\eta I(x_{i},y_{j-\frac{1}{2}},{\bm \Omega},t) {\bm\rangle} dt, ~~
\vec{\bm H}_{i,j-1/2}=
\frac{c}{\epsilon \Delta t}
\int_{t_n}^{t_{n+1}}\hspace{-1.5mm}{\bm\langle}\eta \breve{\vec{\bm \psi}} I(x_{i},y_{j-\frac{1}{2}},{\bm \Omega},t) {\bm\rangle} dt,
\end{split}
\end{flalign}}
and will be determined in the next subsection so as to update the system
\eqref{discrete macro equation}-\eqref{discrete micro equation}.

\subsection{Method for the fluxes' construction}
\label{section of Constructing interface fluxes}
This subsection is devoted to constructing the interface fluxes $\Phi^{n+1}_{i-1/2,j}$, $\Upsilon^{n+1}_{i-1/2,j}$ and
 $\vec{\bm G}_{i-1/2,j}$, $\vec{\bm H}_{i,j-1/2}$ in \eqref{definition of fluxes}.
 In order to get a positive-preserving scheme, the construction of the fluxes here is slightly different from that
 in \cite{Xu-Jiang-Sun-2021}.

 As done in the IMC scheme \cite{IMC-1971}, we discretize the time variable of the second equation
 of \eqref{Rewritten Gray Radiative transfer equaitons} firstly, and obtain
\begin{equation}\nonumber
\epsilon^2\frac{\phi^{n+1}-\phi^n }{\Delta t }
=\beta^{n+1} \sigma^{n+1}\Big( \rho^{n+1} - \phi^{n+1}\Big),
\end{equation}
which implies
\begin{eqnarray}\label{presen of phi in flux constrution}
\nonumber
\phi^{n+1}&=&\frac{\epsilon^2}{ \epsilon^2+\Delta t(\beta\sigma)^{n+1}}\phi^n + \frac{\Delta t(\beta\sigma)^{n+1} }{\epsilon^2+\Delta t(\beta\sigma)^{n+1}}\rho^{n+1}\\
&\equiv& \kappa^{n+1}\phi^n + (1-\kappa^{n+1})\rho^{n+1},
\end{eqnarray}
where $\kappa=\frac{\epsilon^2}{\epsilon^2 +\Delta t\beta\sigma}.$
Replacing $\phi$ by $\phi^{n+1}$ in the first equation of \eqref{Rewritten Gray Radiative transfer equaitons}, we
substitute \eqref{presen of phi in flux constrution} into it to arrive at
\begin{equation}\label{new equationn for flux}
\frac{\epsilon^2}{c}\frac{\partial I}{\partial t}+
\epsilon {\bm \Omega}\cdot\nabla_{\bf r} I=\sigma\Big( \frac{1}{4\pi}\left(\kappa^{n+1}\phi^n + (1-\kappa^{n+1})\rho^{n+1}\right)-I\Big).
\end{equation}
In order to get the $x$-direction numerical fluxes $\Phi^{n+1}_{i-1/2,j}$ and $\vec{\bm G}_{i-1/2,j}$, we
 consider the solution of the following initial value problem at cell interface $x=x_{i-\frac{1}{2}}, y=y_j$:
 \begin{equation}\label{inital value problem on boundary}
\left\{\begin{array}{ll}
\displaystyle{\frac{\epsilon}{c}\frac{\partial I}{\partial t}+
\xi \frac{\partial I}{\partial x}=\frac{\sigma^{n+1}_{i-1/2,j}}{\epsilon}\Big( \frac{1}{4\pi}\left(\kappa^{n+1}\phi^n + (1-\kappa_{i-1/2,j}^{n+1})\rho^{n+1}\right)-I\Big),}\\[2mm]
\displaystyle{I(x,y_j,{\bm \Omega},t)|_{t=t_n}=I^0(x,y_j,{\bm \Omega}),}
\end{array}\right.
\end{equation}
where $\sigma^{n+1}_{i-1/2,j}, \kappa_{i-1/2,j}^{n+1}$ are supposed to be approximate constants of $\sigma, \kappa$ around $t_{n+1}$ at the corresponding cell interface.
Then,  the time-dependent solution of \eqref{inital value problem on boundary} can be written as
\begin{eqnarray}\nonumber\label{cell initial value problem solution I}
&& I(x_{i-\frac{1}{2}},y_j,{\bm \Omega},t)\\
&& =e^{-\nu^{n+1}_{i-1/2,j}(t-t_n)}I^0\left(x_{i-1/2}-\frac{c\xi}{\epsilon}(t-t_n), y_j, {\bm \Omega} \right)\\\nonumber
&& \; +\frac{\nu^{n+1}_{i-1/2,j}}{4\pi}
\int_{t_n}^t e^{-\nu^{n+1}_{i-1/2,j}(t-s)}
\left(\kappa^{n+1}\phi^n + (1-\kappa_{i-1/2,j}^{n+1})\rho^{n+1}\right)\left(x_{i-1/2}-\frac{c\xi}{\epsilon}(t-s),y_j\right) d s,
\end{eqnarray}
where $\nu=c\sigma/\epsilon^2$.

Now, in order to fully determine the solution in \eqref{cell initial value problem solution I}, it remains to give approximations of the initial data $I^0(x,y_j,{\bm \Omega})$,  $(\kappa^{n+1}\phi^n)(x,y_j)$ and $\rho^{n+1}(x,y_j)$
 around the cell interface $(x_{i-1/2},y_j)$.
In UGKS, they are obtained through polynomial reconstructions. Here, we use the following piecewise linear reconstructions:
\begin{equation}\label{I 0 reconstrution}
I^0(x,y_j,{\bm \Omega})=\vec{\bm \psi}\cdot (\vec{\bm I}_{i,j}^{n}+\delta_x \vec{\bm I}_{i,j}^{n}(x-x_i)), \qquad
~ x\in( x_{i-1/2}, x_{i+1/2} ),
\end{equation}
\begin{equation}\label{phi reconstruction}
(k^{n+1}\phi^n)(x,y_j)
=(\kappa^{n+1}\phi^n)_{i-1/2,j} + \delta_x(\kappa^{n+1}\phi^n)_{i-1/2,j}(x-x_{i-1/2}), \quad
~ x\in( x_{i-1}, x_{i} ),
\end{equation}
\begin{equation}\label{rho reconstrution}
\rho^{n+1}(x,y_j)
=\rho^{n+1}_{i-1/2,j} +\delta_x\rho_{i-1/2,j}^{n+1}(x-x_{i-1/2}), \qquad
~ x\in( x_{i-1}, x_{i} ),
\end{equation}
where $\delta_x \vec{\bm I}=(\delta_x I_0^0, \delta_x I_1^{-1},...,\delta_x I_N^{N})'$ are the slopes,  and the second order MUSCL slope limiter is used to remove
(possible) spurious oscillations. And $\varphi_{i-1/2,j}, \delta_x\varphi_{i-1/2,j}$ (for $\varphi = \kappa^{n+1}\phi^n$ or $\rho^{n+1}$)
are the cell interface value and the finite difference approximation respectively, which are given by
\begin{equation}\nonumber
\varphi_{i-1/2,j} =\frac{\varphi_{i,j}+ \varphi_{i-1,j} }{2},
\quad\qquad\qquad
 \delta_x\varphi_{i-1/2,j}=\frac{\varphi_{i,j} - \varphi_{i-1,j} }{\Delta x}.
\end{equation}
We remark here that since the piecewise linear reconstructions have been used for the spatial variable,
the scheme is of second-order accuracy in space.
We shall verify the spatial second-order accuracy numerically in Section \ref{section: Numerical experiments}.

Substituting \eqref{I 0 reconstrution}-
\eqref{rho reconstrution} into \eqref{cell initial value problem solution I},
we obtain the approximation formula of the interface intensity $I(x_{i-1/2},y_j,{\bm \Omega},t)$ that depends on the macro quantities $\rho$ and $\phi$. 

With the above obtained approximation of $I(x_{i-1/2},y_j,{\bm \Omega},t)$ in \eqref{cell initial value problem solution I},
the expressions for the numerical fluxes $\Phi^{n+1}_{i-1/2,j}$ and $\vec{\bm G}_{i-1/2,j}$  can be computed, which  are
\begin{eqnarray}\label{flux Phi}\nonumber
\Phi^{n+1}_{i-1/2,j}
&=&
 \frac{c}{\epsilon \Delta t}
\int_{t_n}^{t_{n+1}} {\bm\langle}\xi I(x_{i-\frac{1}{2}},y_{j},{\bm \Omega},t) {\bm\rangle} dt\\\nonumber
&=&
\tilde{\alpha}^{n+1}_{i-1/2,j}\left\{ {\bm\langle}\xi\vec{\bm \psi} {\bm\rangle}_{1,4}\cdot \left(\vec{\bm I}^n_{i-1,j}+
\frac{\Delta x}{2}\delta_x\vec{\bm I}^n_{i-1,j}\right)
+
{\bm\langle}\xi\vec{\bm \psi} {\bm\rangle}_{2,3}\cdot\left(\vec{\bm I}^n_{i,j}
-\frac{\Delta x}{2}\delta_x\vec{\bm I}^n_{i,j}\right)
\right\}\\\nonumber
& &
+ \tilde{b}^{n+1}_{i-1/2,j}\left(
{\bm\langle}\xi^2\vec{\bm \psi} {\bm\rangle}_{1,4}\cdot\delta_x\vec{\bm I}^n_{i-1,j}
+{\bm\langle}\xi^2\vec{\bm \psi}{\bm\rangle}_{2,3}\cdot\delta_x\vec{\bm I}^n_{i,j}
\right)\\
& &
+\tilde{d}^{n+1}_{i-1/2,j}\frac{4\pi}{3}
\left(\delta_x (\kappa^{n+1}\phi^n)_{i-1/2,j} +(1-\kappa_{i-1/2,j}^{n+1})\delta_x\rho_{i-1/2,j}^{n+1} \right),
\end{eqnarray}
\begin{eqnarray}\label{flux G}\nonumber
\vec{\bf G}_{i-1/2,j}
&=&
\frac{c}{\epsilon \Delta t}
\int_{t_n}^{t_{n+1}} {\bm\langle}\xi \breve{\vec{\bm \psi}} I(x_{i-\frac{1}{2}},y_j,{\bm \Omega},t) {\bm\rangle} dt\\\nonumber
&=&
\tilde{\alpha}^{n+1}_{i-1/2,j}
\left\{ {\bm\langle}\xi\breve{\vec{\bm \psi}}\vec{\bm \psi}' {\bm\rangle}_{1,4}\cdot \left(\vec{\bm I}^n_{i-1,j}+
\frac{\Delta x}{2}\delta_x\vec{\bm I}^n_{i-1,j}\right)+ {\bm\langle}\xi\breve{\vec{\bm \psi}}\vec{\bm \psi}' {\bm\rangle}_{2,3}\cdot\left(\vec{\bm I}^n_{i,j}
-\frac{\Delta x}{2}\delta_x\vec{\bm I}^n_{i,j}\right) \right\}\\\nonumber
& &
+ \tilde{b}^{n+1}_{i-1/2,j}\left(
{\bm\langle}\xi^2\breve{\vec{\bm \psi}}\vec{\bm \psi}' {\bm\rangle}_{1,4}\cdot\delta_x\vec{\bm I}^n_{i-1,j}
+{\bm\langle}\xi^2\breve{\vec{\bm \psi}}\vec{\bm \psi}' {\bm\rangle}_{2,3}\cdot\delta_x\vec{\bm I}^n_{i,j}
\right)\\\nonumber
& &
+\tilde{c}^{n+1}_{i-1/2,j}{\bm\langle}\xi\breve{\vec{\bm \psi}} {\bm\rangle}~
\left( (\kappa^{n+1}\phi^n)_{i-1/2,j} +(1-\kappa_{i-1/2,j}^{n+1})\rho_{i-1/2,j}^{n+1} \right) \\
& &
+ \tilde{d}^{n+1}_{i-1/2,j}{\bm\langle}\xi^2\breve{\vec{\bm \psi}} {\bm\rangle}
\left(\delta_x (\kappa^{n+1}\phi^n)_{i-1/2,j} +(1-\kappa_{i-1/2,j}^{n+1})\delta_x\rho_{i-1/2,j}^{n+1} \right),
\end{eqnarray}
where the coefficients $\tilde{\alpha}^{n+1}_{i-1/2,j}, \tilde{b}^{n+1}_{i-1/2,j},
\tilde{c}^{n+1}_{i-1/2,j},
 \tilde{d}^{n+1}_{i-1/2,j}$ are given by
\begin{eqnarray}\label{Definition of coefficients}\nonumber
&&\tilde{\alpha}(\Delta t, \epsilon, \sigma)=\frac{c}{\epsilon \Delta t \nu}(1-e^{-\nu\Delta t}),\\\nonumber
&&
\tilde{b}(\Delta t, \epsilon, \sigma)= -\frac{c^2}{ \epsilon^2 \nu^2 \Delta t}\left(1-e^{-\nu\Delta t}-\nu\Delta te^{-\nu\Delta t} \right), \\
&&
\tilde{c}(\Delta t, \epsilon, \sigma)=\frac{c^2\sigma}{4\pi \Delta t \epsilon^3 \nu}\left(\Delta t-\frac{1}{\nu}(1-e^{-\nu\Delta t})\right),\\\nonumber
&&
\tilde{d}(\Delta t, \epsilon, \sigma)=-\frac{c^3\sigma}{4\pi \Delta t \epsilon^4 \nu^2}\left(\Delta t( 1+e^{-\nu\Delta t})-\frac{2}{\nu}(1-e^{-\nu\Delta t})\right),
\end{eqnarray}
with $\sigma_{i-1/2,j}^{n+1}=\frac{2\sigma_{i,j}^{n+1}\sigma_{i-1,j}^{n+1}} {\sigma_{i,j}^{n+1}+\sigma_{i-1,j}^{n+1}}$ on the boundary. And
${\bm\langle} \cdot {\bm\rangle}_{1,4}$ denotes the angular integration over the  $\xi>0$  region of the unit sphere,
while ${\bm\langle} \cdot {\bm\rangle}_{2,3}$ over the $\xi<0$  region.

Similarly,  the $y$-direction fluxes $\Upsilon^{n+1}_{i,j-1/2}$ and $\vec{\bm H}_{i,j-1/2}$ can be constructed, and take
the following form:
\begin{eqnarray}\label{flux Upsilon}\nonumber
\Upsilon^{n+1}_{i,j-1/2}
&=&
\tilde{\alpha}^{n+1}_{i,j-1/2}\left\{ {\bm\langle}\eta\vec{\bm \psi} {\bm\rangle}_{1,2}\cdot\left(\vec{\bm I}^n_{i,j-1}+
\frac{\Delta y}{2}\delta_y\vec{\bm I}^n_{i,j-1}\right)
+
{\bm\langle}\eta\vec{\bm \psi}{\bm\rangle}_{3,4}\cdot \left(\vec{\bm I}^n_{i,j}
-\frac{\Delta y}{2}\delta_y\vec{\bm I}^n_{i,j}\right)
\right\}\\\nonumber
& &
+ \tilde{b}^{n+1}_{i,j-1/2}\left(
{\bm\langle}\eta^2\vec{\bm \psi}{\bm\rangle}_{1,2}\cdot\delta_y\vec{\bm I}^n_{i,j-1}
+{\bm\langle}\eta^2\vec{\bm \psi} {\bm\rangle}_{3,4}\cdot \delta_y\vec{\bm I}^n_{i,j}
\right)\\
& &
+\tilde{d}^{n+1}_{i,j-1/2}\frac{4\pi}{3}
\left(\delta_y (\kappa^{n+1}\phi^n)_{i,j-1/2} +(1-\kappa_{i,j-1/2}^{n+1})\delta_y\rho_{i,j-1/2}^{n+1} \right),
\end{eqnarray}
\begin{eqnarray}\label{flux H}
\nonumber
\vec{\bf H}_{i,j-1/2}
&=&
\tilde{\alpha}^{n+1}_{i,j-1/2}
\left\{ {\bm\langle}\eta\breve{\vec{\bm \psi}}\vec{\bm \psi}' {\bm\rangle}_{1,2}\cdot\left(\vec{\bm I}^n_{i,j-1}+
\frac{\Delta y}{2}\delta_y\vec{\bm I}^n_{i,j-1}\right)+ {\bm\langle}\eta\breve{\vec{\bm \psi}}\vec{\bm \psi}' {\bm\rangle}_{3,4}\cdot\left(\vec{\bm I}^n_{i,j}
-\frac{\Delta y}{2}\delta_y\vec{\bm I}^n_{i,j}\right) \right\}\\\nonumber
& &
+ \tilde{b}^{n+1}_{i,j-1/2}\left(
{\bm\langle}\eta^2\breve{\vec{\bm \psi}}\vec{\bm \psi}' {\bm\rangle}_{1,2}\cdot\delta_y\vec{\bm I}^n_{i,j-1}
+{\bm\langle}\eta^2\breve{\vec{\bm \psi}}\vec{\bm \psi}' {\bm\rangle}_{3,4}\cdot\delta_y\vec{\bm I}^n_{i,j}
\right)\\\nonumber
& &
+\tilde{c}^{n+1}_{i,j-1/2}{\bm\langle}\eta\breve{\vec{\bm \psi}} {\bm\rangle}~
\left( (k^{n+1}\phi^n)_{i,j-1/2} +(1-k_{i,j-1/2}^{n+1})\rho_{i,j-1/2}^{n+1} \right) \\
& &
+ \tilde{d}^{n+1}_{i,j-1/2}{\bm\langle}\eta^2\breve{\vec{\bm \psi}} {\bm\rangle}
\left(\delta_y (\kappa^{n+1}\phi^n)_{i,j-1/2} +(1-\kappa_{i,j-1/2}^{n+1})\delta_y\rho_{i,j-1/2}^{n+1} \right),
\end{eqnarray}
 where for $ \varphi = \kappa^{n+1}\phi^n $ or $\rho^{n+1}$,
   $\varphi_{i,j-1/2} = \frac{\varphi_{i,j}+ \varphi_{i,j-1} }{2}, \delta_y\varphi_{i,j-1/2}=\frac{\varphi_{i,j} - \varphi_{i,j-1} }{\Delta y}$,
$\sigma_{i,j-1/2}^{n+1}=\frac{2\sigma_{i,j}^{n+1}\sigma_{i,j-1}^{n+1}}
{\sigma_{i,j}^{n+1}+\sigma_{i,j-1}^{n+1}}.$
And  ${\bm\langle} \cdot {\bm\rangle}_{1,2}$ denotes the angular integration over the  $\eta>0$  region of the unit sphere,
while ${\bm\langle} \cdot {\bm\rangle}_{3,4}$ over the $\eta<0$  region.

Up to now, we have completed the construction of the numerical fluxes.

\subsection{\texorpdfstring{$FP_N$}{}-based UGKS for \eqref{Rewritten Gray Radiative transfer equaitons}}
\label{subsec: loop of $FP_N$-based UGKS}

With the numerical fluxes in hand,
we summarize the whole procedure of the currently constructed $FP_N$-based UGKS in the following.

We note that the nonlinear macro equations \eqref{discrete macro equation} can be solved by the iteration method used in \cite{Sun-Jiang-Xu-2015, Xu-Jiang-Sun-2021}. Moreover, substituting \eqref{presen of phi in flux constrution}
 into the first equation of \eqref{discrete macro equation}, we can obtain the
 decoupled iterative macro system.  Now, we state it in the following:
\vspace{2mm}

{\noindent{\bf Loop of the $FP_N$-based UGKS:}}
Given $\vec{\bm I}^n_{i,j}$, $T_{i,j}^n$, one has $\rho^n_{i,j}$ and $\breve{\vec{\bm I}}^n_{i,j}$.
Find $\vec{\bm I}^{n+1}_{i,j}$ (i.e.,  $\rho^{n+1}_{i,j}$, $\breve{\vec{\bm I}}^{n+1}_{i,j}$) and $T_{i,j}^{n+1}$.

\noindent{ 1) Solve the  nonlinear  macro equations \eqref{discrete macro equation} by the following iteration method to get   $\rho_{i,j}^{n+1}$, $T_{i,j}^{n+1}$:} \\[-3mm]

  {\small
 \noindent{~\quad 1.1)} Set the initial iteration values
  $\rho^{n+1,0}_{i,j}, T^{n+1,0}_{i,j}$ (e.g. take $\rho^{n+1,0}_{i,j} = \rho^n_{i,j}, ~T^{n+1,0}_{i,j} = T^n_{i,j}$);

 \noindent{~\quad 1.2)} For $s=0,1,\cdots ,S$,

   \quad Compute the coefficients $\sigma_{i,j}^{n+1,s}, \beta_{i,j}^{n+1,s}, \kappa_{i,j}^{n+1,s}$, $\tilde{\alpha}^{n+1,s}_{i-1/2,j}, \tilde{b}^{n+1,s}_{i-1/2,j}$,
     $\tilde{d}^{n+1,s}_{i-1/2,j}$ with the value of $ T^{n+1,s}_{i,j}$, and
     solve the following linearized equation to get $\rho_{i,j}^{n+1,s+1}$
\begin{eqnarray}\label{nonlinear iteration of discrete macro eqaution-1-UGKS}
\nonumber
\rho^{n+1,s+1}_{i,j}
&=&\rho^{n}_{i,j}
+\frac{\Delta t}{\Delta x}
\left( \Phi^{n+1,s+1}_{i-1/2,j}-\Phi^{n+1,s+1}_{i+1/2,j} \right)
+\frac{\Delta t}{\Delta y}
\left(\Upsilon^{n+1,s+1}_{i,j-1/2}-\Upsilon^{n+1,s+1}_{i,j+1/2} \right)\\
& &\displaystyle{ +
\frac{\sigma_{i,j}^{n+1,s} c \Delta t}
{\epsilon^2}\kappa_{i,j}^{n+1,s}
(\phi^{n}_{i,j}-\rho^{n+1,s+1}_{i,j})},
\end{eqnarray}
where
\begin{equation}\nonumber
\begin{array}{ll}
\Phi^{n+1,s+1}_{i-1/2,j}
=\tilde{\alpha}^{n+1,s}_{i-1/2,j}\left\{ {\bm\langle}\xi\vec{\bm \psi} {\bm\rangle}_{1,4}\cdot\left(\vec{\bm I}^n_{i-1,j}+
\frac{\Delta x}{2}\delta_x\vec{\bm I}^n_{i-1,j}\right)
+{\bm\langle}\xi\vec{\bm \psi} {\bm\rangle}_{2,3}\cdot\left(\vec{\bm I}^n_{i,j}
-\frac{\Delta x}{2}\delta_x\vec{\bm I}^n_{i,j}\right)
\right\} \\
\qquad\qquad\quad ~ +\tilde{b}^{n+1,s}_{i-1/2,j}\left(
{\bm\langle}\xi^2\vec{\bm \psi}{\bm\rangle}_{1,4}\cdot\delta_x\vec{\bm I}^n_{i-1,j}
+{\bm\langle}\xi^2\vec{\bm \psi} {\bm\rangle}_{2,3}\cdot\delta_x\vec{\bm I}^n_{i,j}
\right)
 \\ \qquad\qquad\quad ~ +\tilde{d}^{n+1,s}_{i-1/2,j}\frac{4\pi}{3}
\left(\delta_x (\kappa^{n+1,s}\phi^n)_{i-1/2,j} +(1-\kappa_{i-1/2,j}^{n+1,s})\delta_x\rho_{i-1/2,j}^{n+1,s+1} \right),
\end{array}
\end{equation}
and similarly  for $\Upsilon^{n+1,s+1}_{i,j-1/2}$.

  Substitute $\rho_{i,j}^{n+1,s+1}$ into the following equation to get $\phi_{i,j}^{n+1,s+1}$:
\begin{equation}\label{nonlinear iteration of discrete macro eqaution-2-UGKS}
\displaystyle{\phi^{n+1,s+1}_{i,j}=
\kappa_{i,j}^{n+1,s}\phi^n_{i,j} + (1-\kappa_{i,j}^{n+1,s}) \rho_{i,j}^{n+1,s+1}.}
\end{equation}

\noindent{~\quad 1.3)}
 Calculate the iteration error, exit when convergence, and set

$$\rho_{i,j}^{n+1}=\rho_{i,j}^{n+1,s+1}, \quad \phi_{i,j}^{n+1}=\phi_{i,j}^{n+1,s+1},\quad T_{i,j}^{n+1}= (\frac{\phi_{i,j}^{n+1}}{ac})^{\frac{1}{4}}.$$
}

\noindent{ 2) With the obtained $\rho_{i,j}^{n+1}$, $\phi_{i,j}^{n+1}$ and $T_{i,j}^{n+1}$, compute $\sigma_{i,j}^{n+1}$ and the
fluxes $\vec{\bm G}_{i\pm1/2,j}$, $\vec{\bm H}_{i,j\pm1/2}$. Then, $\breve{\vec{\bm I}}^{n+1}_{i,j}$ can be obtained directly from the following linear  micro equations:
\begin{eqnarray}
\nonumber
\breve{\vec{\bm I}}^{n+1}_{i,j} & = & \breve{\vec{\bm I}}^{n}_{i,j} +\frac{\Delta t}{\Delta x}
\left( \vec{\bm G}_{i-1/2,j}-\vec{\bm G}_{i+1/2,j} \right)
+\frac{\Delta t}{\Delta y} \left( \vec{\bm H}_{i,j-1/2}-\vec{\bm H}_{i,j+1/2} \right) \\\nonumber
& & - \frac{\sigma_{i,j}^{n+1}c \Delta t}{\epsilon^2} \breve{\vec{\bm I}}^{n+1}_{i,j}
-\Delta t \sigma_f \breve{{\bf F}}   \breve{\vec{\bm I}}^{n+1}_{i,j}.
\end{eqnarray}
}

{\noindent{ 3) Goto 1) for the next computational time step.}}

{\noindent{\bf End}}

\section{Analysis of the \texorpdfstring{$FP_N$}{}-based UGKS}

This section is devoted to analyzing the fluxes in the above $FP_N$-based UGKS and
making the scheme be both positive preserving and asymptotic preserving. First, in Subsection \ref{section of pp theorem},
by a detailed analysis of the linearized iteration equations
\eqref{nonlinear iteration of discrete macro eqaution-1-UGKS}-\eqref{nonlinear iteration of discrete macro eqaution-2-UGKS},
we can obtain the sufficient conditions that preserve the positivity of the macro auxiliary quantities $\rho_{i,j}^{n+1,s+1}$ and $\phi_{i,j}^{n+1,s+1}$ in every iteration. This implies the positivity of the
radiation energy density $\rho_{i,j}^{n+1}$ and material temperature $T_{i,j}^{n+1}$. Then, in Subsection \ref{section of limiter},  linear scaling limiters, which are realizable, are given to enforce those sufficient conditions to hold.
Finally, we analyze the asymptotic preserving property of the scheme
 in Subsection \ref{asymptotic analysis}. Thus, the desired positive preserving and  asymptotic preserving  $FP_N$-based
UGKS, called $PPFP_N$-based UGKS, is obtained.

\subsection{Positivity analysis }
\label{section of pp theorem}
For convenience, we  decompose the macro flux $ \Phi^{n+1,s+1}_{i-1/2,j}$  into the following
three parts:
\begin{equation}\nonumber
\Phi^{n+1,s+1}_{i-1/2,j} = \Phi^{n+1,s+1}_{i-1/2,j}(\vec{\bm I}^n) + \Phi^{n+1,s+1}_{i-1/2,j}(\phi^n)
+\Phi^{n+1,s+1}_{i-1/2,j}(\rho^{n+1,s+1}),
\end{equation}
where $ \Phi^{n+1,s+1}_{i-1/2,j}(\vec{\bm I}^n), \Phi^{n+1,s+1}_{i-1/2,j}(\phi^n),
\Phi^{n+1,s+1}_{i-1/2,j}(\rho^{n+1})$ denote the terms
that are related to $\vec{\bm I}^n$, $\phi^n$ and $\rho^{n+1,s+1}$ in the macro flux $\Phi^{n+1,s+1}_{i-1/2,j}$ of
\eqref{nonlinear iteration of discrete macro eqaution-1-UGKS} respectively. And the other macro fluxes, such as $ \Upsilon^{n+1,s+1}_{i,j-1/2}$,
can be handled similarly. Next, we prove a sufficient condition that makes the solution of the linearized equations
\eqref{nonlinear iteration of discrete macro eqaution-1-UGKS}-\eqref{nonlinear iteration of discrete macro eqaution-2-UGKS}
positive. More precisely,

\begin{thm}\label{positive preserving theorem}
Given $\rho^{n}_{i,j}\geq 0$, $\phi_{i,j}^n \geq 0$ and  $\phi_{i,j}^{n+1,s} \geq 0$.
Suppose that $\Delta t$ satisfies
\begin{equation}\label{time step constraints}
\hspace{-2mm}\Delta t \tilde{\alpha}^{n+1,s}(\Delta t,\epsilon, \sigma_{\min} ) 
\leq
\frac{\Delta x\Delta y}{2(\Delta x+\Delta y)},~
\max_{i,j}\left(-\frac{\tilde{d}^{n+1,s}_{i\pm 1/2,j\pm 1/2
}(\Delta t) }{\sigma_{i,j}^{n+1,s}}
\right)\leq
\frac{ 3c (\Delta x \Delta y)^2 }{ 8\pi \epsilon^2 \left( (\Delta x)^2+(\Delta y)^2 \right) },
\end{equation}
where $\sigma_{\min}=\min_{i,j}\sigma^{n+1,s}_{i,j}$. Furthermore, for any cell $(i,j)$, assume that the initial data
$\vec{\bm I}^n_{i,j}=(\frac{1}{2\sqrt{\pi}}\rho^n_{i,j}, ~\breve{\vec{\bm I}}_{i,j}^{n'})'$ satisfies
\begin{eqnarray}\label{PN reconstruciotn conditon in pp thm}
\nonumber
& &
\frac{1}{4}
\left(\frac{\tilde{\alpha}^{n+1,s}_{i-1/2,j}}{\Delta x}+\frac{\tilde{\alpha}^{n+1,s}_{i+1/2,j}}{\Delta x}
+\frac{\tilde{\alpha}^{n+1,s}_{i,j-1/2}}{\Delta y}+\frac{\tilde{\alpha}^{n+1,s}_{i,j+1/2}}{\Delta y}\right)
\rho^n_{i,j}
+
\left\{
\frac{\tilde{\alpha}^{n+1,s}_{i-1/2,j}}{\Delta x}
{\bm\langle}\xi\breve{\vec{\bm \psi}} {\bm\rangle}_{2,3}
\right.\\
&  & ~\quad 
\left.
-\frac{\tilde{\alpha}^{n+1,s}_{i+1/2,j}}{\Delta x} {\bm\langle}\xi\breve{\vec{\bm \psi}} {\bm\rangle}_{1,4}
+ \frac{\tilde{\alpha}^{n+1,s}_{i,j-1/2}}{\Delta y}
{\bm\langle}\eta\breve{\vec{\bm \psi}}{\bm\rangle}_{3,4}
-\frac{\tilde{\alpha}^{n+1,s}_{i,j+1/2}}{\Delta y} {\bm\langle}\eta\breve{\vec{\bm \psi}} {\bm\rangle}_{1,2} \right\}
\breve{\vec{\bm I}}^n_{i,j}
\geq 0,
\end{eqnarray}
and
\begin{equation}\label{slopex I condition in pp thm}\left\{
\begin{array}{ll}
{\bm\langle}\xi\vec{\bm\psi}{\bm\rangle}_{1,4}\cdot\vec{\bm I}^n_{i,j}\pm{\bm\langle}(\frac{\Delta x}{2}\xi + \frac{\tilde{b}^{n+1,s}_{i+1/2,j}}{\tilde{\alpha}^{n+1,s}_{i+1/2,j}}\xi^2)
\vec{\bm \psi}{\bm\rangle}_{1,4}\cdot
 \delta_x\vec{\bm I}^n_{i,j}\geq 0,\\
 {\bm\langle}-\xi\vec{\bm\psi}{\bm\rangle}_{2,3}\cdot\vec{\bm I}^n_{i,j}
\pm {\bm\langle}(\frac{\Delta x}{2}\xi + \frac{\tilde{b}^{n+1,s}_{i-1/2,j}}{\tilde{\alpha}^{n+1,s}_{i-1/2,j}}\xi^2)\vec{\bm\psi}{\bm\rangle}_{2,3}\cdot
\delta_x\vec{\bm I}^n_{i,j}\geq 0,
\end{array}\right.
\end{equation}
\begin{equation}\label{slopey I condition in pp thm}\left\{
\begin{array}{ll}
 {\bm\langle}\eta\vec{\bm\psi}{\bm\rangle}_{1,2}\cdot\vec{\bm I}^n_{i,j}\pm{\bm\langle}
 (\frac{\Delta y}{2}\eta + \frac{\tilde{b}^{n+1,s}_{i,j+1/2}}{\tilde{\alpha}^{n+1,s}_{i,j+1/2}}\eta^2)\vec{\bm \psi}{\bm\rangle}_{1,2}\cdot
 \delta_x\vec{\bm I}^n_{i,j}\geq 0,\\
 {\bm\langle}-\eta\vec{\bm\psi}{\bm\rangle}_{3,4}\cdot\vec{\bm I}^n_{i,j}
\pm {\bm\langle}(\frac{\Delta y}{2}\eta + \frac{\tilde{b}^{n+1,s}_{i,j-1/2}}{\tilde{\alpha}^{n+1,s}_{i,j-1/2}}\eta^2)\vec{\bm\psi}{\bm\rangle}_{3,4}\cdot
\delta_y\vec{\bm I}^n_{i,j}\geq 0.
\end{array}\right.
\end{equation}
Then, it holds that
 $$\rho^{n+1,s+1}_{i,j}\geq 0,\qquad\qquad \phi^{n+1,s+1}_{i,j}\geq 0.$$
\end{thm}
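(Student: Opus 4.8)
The plan is to read the single iteration step \eqref{nonlinear iteration of discrete macro eqaution-1-UGKS} as a linear algebraic system for the collection $\{\rho^{n+1,s+1}_{i,j}\}$ and to cast it in the form $A\vec{\rho}=\vec{b}$, where $A$ is an $M$-matrix and $\vec{b}\geq 0$ entrywise; then $A^{-1}\geq 0$ forces $\rho^{n+1,s+1}_{i,j}\geq 0$, after which $\phi^{n+1,s+1}_{i,j}\geq 0$ is automatic. Concretely, I would insert the three-part flux splitting $\Phi^{n+1,s+1}_{i-1/2,j}(\vec{\bm I}^n)+\Phi^{n+1,s+1}_{i-1/2,j}(\phi^n)+\Phi^{n+1,s+1}_{i-1/2,j}(\rho^{n+1,s+1})$ (and its $\Upsilon,\vec{\bm G},\vec{\bm H}$ analogues) into \eqref{nonlinear iteration of discrete macro eqaution-1-UGKS} and move every term containing the unknown $\rho^{n+1,s+1}$ — the implicit collision sink and the $\tilde{d}$-weighted pieces $(1-\kappa)\delta_x\rho^{n+1,s+1}$, $(1-\kappa)\delta_y\rho^{n+1,s+1}$ — to the left, leaving the explicitly known data $\rho^n$, $\phi^n$, $\vec{\bm I}^n$ on the right. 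The whole argument rests on the elementary sign facts $\tilde{\alpha}>0$, $\tilde{d}<0$ and $\kappa\in(0,1]$, which follow from \eqref{Definition of coefficients} and from $\kappa=\epsilon^2/(\epsilon^2+\Delta t\beta\sigma)$.

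For the operator $A$, the nearest-neighbour differences $\delta_x\rho^{n+1,s+1}_{i\mp1/2,j}$ turn the $\tilde{d}$-terms into a discrete diffusion; since $\tilde{d}<0$ and $1-\kappa\geq 0$ its off-diagonal entries are $\leq 0$, while the diagonal carries $1+\tfrac{c\Delta t}{\epsilon^2}\sigma\kappa$ plus a positive diffusive part. The second inequality in \eqref{time step constraints}, which bounds $-\tilde{d}/\sigma$, is calibrated exactly so that $A$ is strictly diagonally dominant, hence an $M$-matrix with non-negative inverse. The same bound also settles the $\phi^n$ contribution to $\vec{b}$: the $\tilde{d}$-diffusion of $\kappa\phi^n$ produces a negative center coefficient and positive neighbour coefficients, and a direct computation shows that this negative center term is dominated by the collision source $\tfrac{c\Delta t}{\epsilon^2}\sigma\kappa\,\phi^n_{i,j}$ precisely under the second constraint, so that the entire $\phi^n$-part of $\vec{b}$ is $\geq 0$ whenever $\phi^n\geq 0$.

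The heart of the proof is the non-negativity of the transport part $\Phi(\vec{\bm I}^n),\Upsilon(\vec{\bm I}^n)$ of $\vec{b}$, and here I would regroup the net flux cell by cell according to the upwind split ${\bm\langle}\cdot{\bm\rangle}_{1,4}/{\bm\langle}\cdot{\bm\rangle}_{2,3}$ and ${\bm\langle}\cdot{\bm\rangle}_{1,2}/{\bm\langle}\cdot{\bm\rangle}_{3,4}$. The inflow from each upwind neighbour $(i\mp1,j),(i,j\mp1)$ enters through reconstructed, flux-weighted edge values such as ${\bm\langle}\xi\vec{\bm\psi}{\bm\rangle}_{1,4}\cdot\vec{\bm I}^n_{i-1,j}+{\bm\langle}(\tfrac{\Delta x}{2}\xi+\tfrac{\tilde{b}}{\tilde{\alpha}}\xi^2)\vec{\bm\psi}{\bm\rangle}_{1,4}\cdot\delta_x\vec{\bm I}^n_{i-1,j}$, which are exactly the quantities — with the matching interface value of $\tilde{b}/\tilde{\alpha}$ and, thanks to the $\pm$, for either sign of the slope — rendered non-negative by \eqref{slopex I condition in pp thm}--\eqref{slopey I condition in pp thm} applied to those cells, so every inflow is $\geq 0$. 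For the own cell I would insert \eqref{decomposition-of-I}, splitting $\vec{\bm I}^n_{i,j}$ into its isotropic component $\tfrac{1}{2\sqrt\pi}\rho^n_{i,j}$ and the anisotropic moments $\breve{\vec{\bm I}}^n_{i,j}$: the outgoing isotropic flux is a fraction of $\rho^n_{i,j}$ absorbed by the first (CFL) inequality in \eqref{time step constraints}, the sign-indefinite anisotropic cell-center contribution is dominated by the $\tfrac14\rho^n_{i,j}$ budget through \eqref{PN reconstruciotn conditon in pp thm}, and the remaining slope terms are again controlled by \eqref{slopex I condition in pp thm}--\eqref{slopey I condition in pp thm}. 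I expect this bookkeeping — assigning each own-cell and each neighbour term to the correct instance of \eqref{PN reconstruciotn conditon in pp thm}--\eqref{slopey I condition in pp thm} and checking that the several fractions of $\rho^n_{i,j}$ add up within the CFL budget — to be the principal obstacle, since it is precisely the second-order difficulty that does not arise in the first-order scheme of \cite{Xu-Jiang-Sun-2021}.

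Putting the pieces together yields $\vec{b}\geq 0$, and combined with $A^{-1}\geq 0$ this gives $\rho^{n+1,s+1}_{i,j}\geq 0$ for all $(i,j)$. Finally, \eqref{nonlinear iteration of discrete macro eqaution-2-UGKS} writes $\phi^{n+1,s+1}_{i,j}=\kappa^{n+1,s}_{i,j}\phi^n_{i,j}+(1-\kappa^{n+1,s}_{i,j})\rho^{n+1,s+1}_{i,j}$ as a convex combination, since $\kappa\in(0,1]$, of the non-negative numbers $\phi^n_{i,j}$ and $\rho^{n+1,s+1}_{i,j}$; hence $\phi^{n+1,s+1}_{i,j}\geq 0$, which completes the proof.
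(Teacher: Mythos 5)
Your proposal is correct and follows essentially the same route as the paper's own proof: recast \eqref{nonlinear iteration of discrete macro eqaution-1-UGKS} as an $M$-matrix system ${\bm A}^s\vec{\bm\rho}^{\,s+1}=\vec{\bm b}^s$, split $\vec{\bm b}^s$ into its $\vec{\bm I}^n$- and $\phi^n$-parts, use \eqref{slopex I condition in pp thm}--\eqref{slopey I condition in pp thm} to drop the upwind-neighbour inflow and to control the own-cell slope terms, use \eqref{PN reconstruciotn conditon in pp thm} to absorb the anisotropic cell-centre contribution into the $\tfrac14\rho^n_{i,j}$ budget, invoke the first constraint of \eqref{time step constraints} for the isotropic outflow and the second for the $\tilde d$-diffusion of $\kappa\phi^n$ against the collision source, and finish with the convex combination \eqref{nonlinear iteration of discrete macro eqaution-2-UGKS}. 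One small correction that does not affect validity: the $M$-matrix property of ${\bm A}^s$ needs no time-step restriction at all---the $\tilde d$-weighted terms enter the diagonal and the off-diagonals with equal and opposite weight, so each row sum equals $1+\sigma^{n+1,s}_{i,j}c\Delta t\,\kappa^{n+1,s}_{i,j}/\epsilon^2>0$ for any $\Delta t>0$, exactly as the paper notes---hence the second inequality in \eqref{time step constraints} is needed only where you also (correctly) deploy it, namely to make the $\phi^n$-part of the right-hand side nonnegative.
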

\begin{proof}
We first give the proof of $\rho^{n+1,s+1}_{i,j} \geq 0$. Once it is obtained, by \eqref{nonlinear iteration of discrete macro eqaution-2-UGKS}
and the condition $\phi_{i,j}^n \geq 0$, and the fact $\kappa=\frac{\epsilon^2}{\epsilon^2+\Delta t \beta\sigma}\in [0,1]$,
we obtain $\phi^{n+1,s+1}_{i,j}\geq 0$ immediately.

Denote by
$$\vec{\bm\rho}^{~s+1}=(\rho^{n+1,s+1}_{1,1},\rho^{n+1,s+1}_{1,2},...,\rho^{n+1,s+1}_{2,1},\rho^{n+1,s+1}_{2,2},...)' $$
the solution of the discrete linearized macro equation \eqref{nonlinear iteration of discrete macro eqaution-1-UGKS}.
 Thus we can rewrite equation \eqref{nonlinear iteration of discrete macro eqaution-1-UGKS} into the following matrix form
\begin{equation}\nonumber
 {\bm A}^s\vec{\bm{\rho}}^{~s+1} = \vec{\bm b}^{s}.
 \end{equation}
Here ${\bm A}^s$ is the coefficient matrix of \eqref{nonlinear iteration of discrete macro eqaution-1-UGKS} and $\vec{\bm b}^{s}$ denotes
the right-hand term.
For the matrix ${\bm A}^s$, one sees that for any $\Delta t>0$, $\epsilon\geq0, \sigma\geq0$, the coefficient function
$\tilde{d}(\Delta t,\epsilon, \sigma)\leq 0$ and $0 \leq \kappa \leq 1$. Then, one easily verifies that
 \begin{equation}\nonumber
 {\bm A}^s(p,q)\leq 0~~ \forall p\neq q, \quad\quad \text{and} \quad\quad \sum_{q}{\bm A}^s(p,q)>0~~ \forall p.
 \end{equation}
 Hence, ${\bm A}^s$ is an $M$-matrix. This implies that $({\bm A}^s)^{-1}$ is non-negative. Therefore,
 if $\vec{\bm b}^{s}\geq 0$, then it holds that $ \vec{\bm{\rho}}^{~s+1} = ({\bm A}^s)^{-1}\vec{\bm b}^{s} \geq 0$.

 Now, in order to get $\rho^{n+1,s+1}_{i,j} \geq 0$, it suffices to show $\vec{\bm b}^{s} \geq 0$, i.e., $b_{i,j}^s \geq 0$, $\forall\, i,j$.
For convenience, we decompose $b_{i,j}^s \geq 0$ into the following two parts:
\begin{equation}\nonumber
b_{i,j}^s =  b_{i,j}^s( \vec{\bm I}^n ) + b_{i,j}^s(\phi^n),
\end{equation}
with
\begin{eqnarray}\nonumber
 \qquad b_{i,j}^s(\vec{\bm I}^n)
&=&
\rho^{n}_{i,j}+
\frac{\Delta t}{\Delta x}
\left( \Phi^{n+1,s+1}_{i-1/2,j}(\vec{\bm I}^n)
-\Phi^{n+1,s+1}_{i+1/2,j}(\vec{\bm I}^n)
\right) + \\ \nonumber
& \ \ \ \ \ & \frac{\Delta t}{\Delta y}
\left( \Upsilon^{n+1,s+1}_{i,j-1/2}(\vec{\bm I}^n)
-\Upsilon^{n+1,s+1}_{i,j+1/2}(\vec{\bm I}^n)
\right), \\\nonumber
 \qquad b_{i,j}^s(\phi^n)
 &=&
\frac{\Delta t}{\Delta x}
\left( {\Phi}^{n+1,s+1}_{i-1/2,j}(\phi^n)-{\Phi}^{n+1,s+1}_{i+1/2,j}(\phi^n) \right)
+\\\nonumber
& \ \ \ \ & \frac{\Delta t}{\Delta y}
\left({\Upsilon}^{n+1,s+1}_{i,j-1/2}(\phi^n)-{\Upsilon}^{n+1,s+1}_{i,j+1/2} (\phi^n)\right)+
\\\nonumber
&\ \ \ \ &
\frac{\sigma_{i,j}^{n+1,s} c \Delta t}{\epsilon^2} \kappa_{i,j}^{n+1,s}\phi_{i,j}^n.
\end{eqnarray}
Next, we show that
$b_{i,j}^s(\vec{\bm I}^n)\geq 0$ and $b_{i,j}^s(\phi^n)\geq 0$.

First, denotes that
\begin{eqnarray}\nonumber
\Phi^{n+1,s+1}_{i-1/2,j}(\vec{\bm I}^n)
&=&
\tilde{\alpha}^{n+1,s}_{i-1/2,j}\left\{ {\bm\langle}\xi\vec{\bm \psi} {\bm\rangle}_{1,4}\cdot \left(\vec{\bm I}^n_{i-1,j}+
\frac{\Delta x}{2}\delta_x\vec{\bm I}^n_{i-1,j}\right)
+
{\bm\langle}\xi\vec{\bm \psi} {\bm\rangle}_{2,3}\cdot\left(\vec{\bm I}^n_{i,j}
-\frac{\Delta x}{2}\delta_x\vec{\bm I}^n_{i,j}\right)
\right\}\\\nonumber
& &
+ \tilde{b}^{n+1,s}_{i-1/2,j}\left(
{\bm\langle}\xi^2\vec{\bm \psi} {\bm\rangle}_{1,4}\cdot\delta_x\vec{\bm I}^n_{i-1,j}
+{\bm\langle}\xi^2\vec{\bm \psi}{\bm\rangle}_{2,3}\cdot\delta_x\vec{\bm I}^n_{i,j}
\right).
\end{eqnarray}
And the expressions of 
$\Phi^{n+1,s+1}_{i+1/2,j}(\vec{\bm I}^n), \Upsilon^{n+1,s+1}_{i,j-1/2}(\vec{\bm I}^n)
$ and $\Upsilon^{n+1,s+1}_{i,j+1/2}(\vec{\bm I}^n)$ can be defined similarly.
For $\Delta t>0, \epsilon\geq0$ and $\sigma\geq 0$, the coefficient function
$\tilde{\alpha}(\Delta t,\epsilon, \sigma^{n+1,s}_{i,j})\geq 0$. Applying \eqref{slopex I condition in pp thm} directly, one has
\begin{eqnarray}\nonumber
 \Phi^{n+1,s+1}_{i-1/2,j}(\vec{\bm I}^n) &\geq&
\tilde{\alpha}^{n+1,s}_{i-1/2,j}
{\bm\langle}\xi\vec{\bm \psi}{\bm\rangle}_{2,3}\cdot\left(\vec{\bm I}^n_{i,j}
-\frac{\Delta x}{2}\delta_x\vec{\bm I}^n_{i,j}\right)
\\ \nonumber & &
+\tilde{b}^{n+1,s}_{i-1/2,j}
{\bm\langle}\xi^2\vec{\bm \psi}{\bm\rangle}_{2,3}\cdot\delta_x\vec{\bm I}^n_{i,j},\\\nonumber
-\Phi^{n+1,s+1}_{i+1/2,j}(\vec{\bm I}^n) &\geq&
-\tilde{\alpha}^{n+1,s}_{i+1/2,j} {\bm\langle}\xi\vec{\bm \psi} {\bm\rangle}_{1,4}\cdot\left(\vec{\bm I}^n_{i,j}+
\frac{\Delta x}{2}\delta_x\vec{\bm I}^n_{i,j}\right)
 \\ \nonumber & &
 -\tilde{b}^{n+1,s}_{i+1/2,j}
{\bm\langle}\xi^2\vec{\bm \psi} {\bm\rangle}_{1,4} \cdot \delta_x\vec{\bm I}^n_{i,j}.
\end{eqnarray}
Also by virtue of the conditions in \eqref{slopex I condition in pp thm}, it holds that
\begin{eqnarray}\nonumber
&&-{\bm\langle}\xi\vec{\bm \psi}{\bm\rangle}_{2,3}\cdot\frac{\Delta x}{2}\delta_x\vec{\bm I}^n_{i,j}
+\frac{\tilde{b}^{n+1,s}_{i-1/2,j}}{\tilde{\alpha}^{n+1,s}_{i-1/2,j}}
{\bm\langle}\xi^2\vec{\bm \psi}{\bm\rangle}_{2,3}\cdot\delta_x\vec{\bm I}^n_{i,j}\geq
{\bm\langle}\xi\vec{\bm \psi}{\bm\rangle}_{2,3}\cdot\vec{\bm I}^n_{i,j},
\\\nonumber
&&-{\bm\langle}\xi\vec{\bm \psi} {\bm\rangle}_{1,4}\cdot
\frac{\Delta x}{2}\delta_x\vec{\bm I}^n_{i,j}
 - \frac{\tilde{b}^{n+1,s}_{i+1/2,j}}{ \tilde{\alpha}^{n+1,s}_{i+1/2,j}  }
{\bm\langle}\xi^2\vec{\bm \psi} {\bm\rangle}_{1,4} \cdot \delta_x\vec{\bm I}^n_{i,j}\geq
- {\bm\langle}\xi\vec{\bm \psi}{\bm\rangle}_{1,4}\cdot\vec{\bm I}^n_{i,j}.
\end{eqnarray}
Therefore,
$$
\Phi^{n+1,s+1}_{i-1/2,j}(\vec{\bm I}^n) \geq
2\tilde{\alpha}^{n+1,s}_{i-1/2,j}
{\bm\langle}\xi\vec{\bm \psi}{\bm\rangle}_{2,3}\cdot\vec{\bm I}^n_{i,j},\quad
-\Phi^{n+1,s+1}_{i+1/2,j}(\vec{\bm I}^n) \geq
-2\tilde{\alpha}^{n+1,s}_{i+1/2,j} {\bm\langle}\xi\vec{\bm \psi} {\bm\rangle}_{1,4}\cdot\vec{\bm I}^n_{i,j}.
$$
Similarly, by the conditions in \eqref{slopey I condition in pp thm}, we can obtain
$$
\Upsilon^{n+1,s+1}_{i,j-1/2}(\vec{\bm I}^n) \geq
2\tilde{\alpha}^{n+1,s}_{i,j-1/2}
{\bm\langle}\eta\vec{\bm \psi}{\bm\rangle}_{3,4}\cdot\vec{\bm I}^n_{i,j},\quad
-\Upsilon^{n+1,s+1}_{i,j+1/2}(\vec{\bm I}^n) \geq
-2\tilde{\alpha}^{n+1,s}_{i,j+1/2} {\bm\langle}\eta\vec{\bm \psi} {\bm\rangle}_{1,2}\cdot\vec{\bm I}^n_{i,j}.
$$
Thus,
\begin{eqnarray}\nonumber
b_{i,j}^s(\vec{\bm I}^n)
&\geq&
\rho^{n}_{i,j}+
\frac{\Delta t}{\Delta x}
\left\{ 2 \tilde{\alpha}^{n+1,s}_{i-1/2,j}
{\bm\langle}\xi\vec{\bm \psi}{\bm\rangle}_{2,3}\cdot\vec{\bm I}^n_{i,j}
- 2 \tilde{\alpha}^{n+1,s}_{i+1/2,j} {\bm\langle}\xi\vec{\bm \psi} {\bm\rangle}_{1,4}\cdot \vec{\bm I}^n_{i,j}
 \right\}
\\\nonumber
&~&
+ \frac{\Delta t}{\Delta y}
\left\{
2 \tilde{\alpha}^{n+1,s}_{i,j-1/2}
{\bm\langle}\eta\vec{\bm \psi}{\bm\rangle}_{3,4}\cdot\vec{\bm I}^n_{i,j}
  -2\tilde{\alpha}^{n+1,s}_{i,j+1/2} {\bm\langle}\eta\vec{\bm \psi} {\bm\rangle}_{1,2}\cdot\vec{\bm I}^n_{i,j}
 \right\}.
\end{eqnarray}
 Noticing that
$\vec{\bm I}_{i,j}^n=(\frac{1}{2\sqrt{\pi}} \rho^{n}_{i,j}, ~\breve{\vec{\bm I}}_{i,j}^{n'})',$
 and
$$
\langle\xi \psi_0^0\rangle_{1,4} = \frac{\sqrt{\pi}}{2}, ~\quad
\langle\xi \psi_0^0\rangle_{2,3} = -\frac{\sqrt{\pi}}{2},~\quad
\langle\eta \psi_0^0\rangle_{1,2} = \frac{\sqrt{\pi}}{2}, ~\quad
\langle\eta \psi_0^0\rangle_{3,4} = -\frac{\sqrt{\pi}}{2}, $$
we can rewrite $b_{i,j}^s(\vec{\bm I}^n)$ as
\begin{eqnarray}\nonumber
\hspace{-2mm}b_{i,j}^s(\vec{\bm I}^n)
&\geq&
 \rho_{i,j}^n
- \frac{\Delta t}{2}\left( \frac{\tilde{\alpha}^{n+1,s}_{i-1/2,j}}{\Delta x}
+\frac{\tilde{\alpha}^{n+1,s}_{i+1/2,j}}{\Delta x}
+\frac{\tilde{\alpha}^{n+1,s}_{i,j-1/2}}{\Delta y}
+\frac{\tilde{\alpha}^{n+1,s}_{i,j+1/2}}{\Delta y}
 \right)\rho_{i,j}^n
 \\\nonumber
&~& \hspace{-2.5mm}
+2\Delta t
\left\{  \frac{\tilde{\alpha}^{n+1,s}_{i-1/2,j}}{\Delta x}
 {\bm\langle}\xi\breve{\vec{\bm \psi}} {\bm\rangle}_{2,3}
-  \frac{\tilde{\alpha}^{n+1,s}_{i+1/2,j}}{\Delta x}{\bm\langle}\xi\breve{\vec{\bm \psi}} {\bm\rangle}_{1,4}
+ \frac{\tilde{\alpha}^{n+1,s}_{i,j-1/2}}{\Delta y}
{\bm\langle}\eta\breve{\vec{\bm \psi}} {\bm\rangle}_{3,4}
  -\frac{\tilde{\alpha}^{n+1,s}_{i,j+1/2}}{\Delta y} {\bm\langle}\eta\breve{\vec{\bm \psi}} {\bm\rangle}_{1,2}
 \right\}\cdot\breve{\vec{\bm I}}^n_{i,j}  \\\nonumber
&\geq&
 \rho_{i,j}^n
- \Delta t\left( \frac{\tilde{\alpha}^{n+1,s}_{i-1/2,j}}{\Delta x}
+\frac{\tilde{\alpha}^{n+1,s}_{i+1/2,j}}{\Delta x}
+\frac{\tilde{\alpha}^{n+1,s}_{i,j-1/2}}{\Delta y}
+\frac{\tilde{\alpha}^{n+1,s}_{i,j+1/2}}{\Delta y}
 \right)\rho_{i,j}^n
 \\\nonumber
& &\hspace{-2.5mm}
+\frac{\Delta t}{2}\left( \frac{\tilde{\alpha}^{n+1,s}_{i-1/2,j}}{\Delta x}
+\frac{\tilde{\alpha}^{n+1,s}_{i+1/2,j}}{\Delta x}
+\frac{\tilde{\alpha}^{n+1,s}_{i,j-1/2}}{\Delta y}
+\frac{\tilde{\alpha}^{n+1,s}_{i,j+1/2}}{\Delta y}
 \right)\rho_{i,j}^n
 \\\nonumber
& & \hspace{-2.5mm}
+2\Delta t
\left\{  \frac{\tilde{\alpha}^{n+1,s}_{i-1/2,j}}{\Delta x}
 {\bm\langle}\xi\breve{\vec{\bm \psi}} {\bm\rangle}_{2,3}
-  \frac{\tilde{\alpha}^{n+1,s}_{i+1/2,j}}{\Delta x}{\bm\langle}\xi\breve{\vec{\bm \psi}} {\bm\rangle}_{1,4}
+ \frac{\tilde{\alpha}^{n+1,s}_{i,j-1/2}}{\Delta y}
{\bm\langle}\eta\breve{\vec{\bm \psi}} {\bm\rangle}_{3,4}
  -\frac{\tilde{\alpha}^{n+1,s}_{i,j+1/2}}{\Delta y} {\bm\langle}\eta\breve{\vec{\bm \psi}} {\bm\rangle}_{1,2}
 \right\}\cdot\breve{\vec{\bm I}}^n_{i,j},
 \end{eqnarray}
which together with \eqref{PN reconstruciotn conditon in pp thm} implies
\begin{eqnarray}\nonumber
 b_{i,j}^s(\vec{\bm I}^n)\geq
\rho_{i,j}^n
- \Delta t\left( \frac{\tilde{\alpha}^{n+1,s}_{i-1/2,j}}{\Delta x}
+\frac{\tilde{\alpha}^{n+1,s}_{i+1/2,j}}{\Delta x}
+\frac{\tilde{\alpha}^{n+1,s}_{i,j-1/2}}{\Delta y}
+\frac{\tilde{\alpha}^{n+1,s}_{i,j+1/2}}{\Delta y}
 \right)\rho_{i,j}^n.
  \end{eqnarray}
Because for $\Delta t>0, \epsilon\geq 0$ and $\sigma\geq0$, the function $\tilde{\alpha}(\Delta t,\epsilon, \sigma)$ is decreasing with
respect to the variable $\sigma$, we see that
$$0<\tilde{\alpha}(\Delta t,\epsilon, \sigma^{n+1,s}_{i,j}) \leq  \tilde{\alpha}(\Delta t,\epsilon,\min_{i,j}\sigma^{n+1,s}_{i,j}).$$
Thus,
 \begin{eqnarray}\nonumber
 b_{i,j}^s(\vec{\bm I}^n) &\geq&
\left(1 -2 \Delta t \frac{\Delta x+\Delta y}{\Delta x\Delta y}
\tilde{\alpha}(\Delta t,\epsilon,\min_{i,j}\sigma^{n+1,s}_{i,j})\right)
\rho_{i,j}^n.
\end{eqnarray}
Form the first constraint condition in \eqref{time step constraints} on time step $\Delta t$, we immediately get
$$ b_{i,j}^s(\vec{\bm I}^n) \geq 0.$$

On the other hand,
\begin{eqnarray}\nonumber
b_{i,j}^s(\phi^n) &=&
\frac{4\pi}{3} \frac{\Delta t}{\Delta x}
\left(
\tilde{d}^{n+1,s}_{i-1/2,j}
\frac{\kappa_{i,j}^{n+1,s}\phi_{i,j}^{n}-\kappa_{i-1,j}^{n+1,s}\phi_{i-1,j}^{n} }{\Delta x}
-\tilde{d}^{n+1,s}_{i+1/2,j}
\frac{\kappa_{i+1,j}^{n+1,s}\phi_{i+1,j}^{n}- \kappa_{i,j}^{n+1,s}\phi_{i,j}^{n} }{\Delta x}\right)\\\nonumber
& &
+\frac{4\pi}{3} \frac{\Delta t}{\Delta y}
\left(
\tilde{d}^{n+1,s}_{i,j-1/2}
\frac{\kappa_{i,j}^{n+1,s}\phi_{i,j}^{n}-\kappa_{i,j-1}^{n+1,s} \phi_{i,j-1}^{n} }{\Delta y}
-\tilde{d}^{n+1,s}_{i,j+1/2}
\frac{\kappa_{i,j+1}^{n+1,s}\phi_{i,j+1}^{n}- \kappa_{i,j}^{n+1,s}\phi_{i,j}^{n} }{\Delta y}\right)
\\\nonumber
& & +\frac{\sigma_{i,j}^{n+1,s} c \Delta t}{\epsilon^2} \kappa_{i,j}^{n+1,s}\phi_{i,j}^n = \Delta t \kappa_{i,j}^{n+1,s} \\\nonumber
& &
\left( \frac{\sigma_{i,j}^{n+1,s} c }{\epsilon^2}
+ \frac{4\pi}{3(\Delta x)^2}
\left(
\tilde{d}^{n+1,s}_{i-1/2,j}
+ \tilde{d}^{n+1,s}_{i+1/2,j}\right)
+ \frac{4\pi}{3(\Delta y)^2}
\left(
\tilde{d}^{n+1,s}_{i,j-1/2}
+ \tilde{d}^{n+1,s}_{i,j+1/2}
\right)
\right) \phi_{i,j}^{n}\\\nonumber
& &
-\frac{4\pi}{3}\frac{\Delta t}{(\Delta x)^2}
\tilde{d}^{n+1,s}_{i-1/2,j}
\kappa_{i-1,j}^{n+1,s}\phi^n_{i-1,j}-\frac{4\pi}{3}\frac{\Delta t}{(\Delta x)^2}
\tilde{d}^{n+1,s}_{i+1/2,j}
\kappa_{i+1,j}^{n+1,s}\phi^n_{i+1,j}\\\nonumber
& &
-\frac{4\pi}{3}\frac{\Delta t}{(\Delta y)^2}
\tilde{d}^{n+1,s}_{i,j-1/2}
\kappa_{i,j-1}^{n+1,s}\phi^n_{i,j-1}-\frac{4\pi}{3}\frac{\Delta t}{(\Delta y)^2}
\tilde{d}^{n+1,s}_{i,j+1/2}
\kappa_{i,j+1}^{n+1,s}\phi^n_{i,j+1}.
\end{eqnarray}
For $\tilde{d}(\Delta t,\epsilon, \sigma)\leq 0$ and $\kappa=\frac{\epsilon^2}{ \epsilon^2+\Delta t \beta\sigma}\geq 0$,
one has  
\begin{eqnarray}\nonumber
& &b_{i,j}^s(\phi^n)\\\nonumber
&\geq&
\Delta t \kappa_{i,j}^{n+1,s}\left( \frac{\sigma_{i,j}^{n+1,s} c }{\epsilon^2}
+ \frac{4\pi}{3(\Delta x)^2}
\left(
\tilde{d}^{n+1,s}_{i-1/2,j}
+ \tilde{d}^{n+1,s}_{i+1/2,j}\right)
+ \frac{4\pi}{3(\Delta y)^2}
\left(\tilde{d}^{n+1,s}_{i,j-1/2}
+ \tilde{d}^{n+1,s}_{i,j+1/2}
\right)
\right) \phi_{i,j}^{n}
\\\nonumber
&\geq&
\Delta t \kappa_{i,j}^{n+1,s}\sigma_{i,j}^{n+1,s} \left( \frac{ c }{\epsilon^2}
+ \frac{4\pi}{3} \left(\frac{2}{(\Delta x)^2}+\frac{2}{(\Delta y)^2}\right)
\frac{ \min\{\tilde{d}^{ n+1,s}_{i,j\pm 1/2}, \tilde{d}^{ n+1,s}_{i\pm 1/2,j}\} }{\sigma^{n+1,s}_{i,j}}
\right) \phi_{i,j}^{n}.
\end{eqnarray}
From the second time step constraint condition in \eqref{time step constraints}, one obtains
$$ b_{i,j}^s(\phi^n) \geq 0.$$
This completes the proof.
\end{proof}
\begin{rem}\label{remark: time step constraint}
We remark that the conditions (\ref{time step constraints}) on $\Delta t$ are
not restrictive. (i) For the optically thin case ($\epsilon=O(1), \sigma\rightarrow 0$), since $\tilde{\alpha}\to\frac{c}{\epsilon}$,
 the first time step constraint condition in \eqref{time step constraints} can be simplified to
$\Delta t \leq \frac{\epsilon}{4c}\cdot\min\{\Delta x, \Delta y\}.$
And by the analysis used in \cite[Appendix]{Xu-Jiang-Sun-2022},
the second time step constraint condition in \eqref{time step constraints} can be reduced to
$ \Delta t \leq \frac{\sqrt{6}}{2} \cdot\frac{\epsilon}{c}\cdot \min\{\Delta x, \Delta y \}.$
Thus,  for the optically thin case ($\epsilon=O(1), \sigma\rightarrow 0$), the time step constraint conditions  in \eqref{time step constraints}
reduce to
\begin{equation}\label{time constr after pp theorem}
\Delta t \leq \frac{\epsilon}{4c}\cdot\min\{\Delta x, \Delta y\},
\end{equation}
which happens to be the usual CFL stability condition $\Delta t\leq CFL\cdot \frac{\epsilon}{c} \min\{\Delta x, \Delta y\}$ with $CFL=\frac{1}{4}$.
(ii) For the optically thick case ($\epsilon \rightarrow 0, \sigma=O(1)$), since $\tilde{\alpha}\rightarrow 0$ and $\tilde{d}\to -c/(4\pi\sigma )$,
we see that there is actually no constraint on the time step.
\end{rem}

\subsection{Realizability  of limiters}
\label{section of limiter}

In this subsection, we show that with suitable scaling limiters the conditions
\eqref{PN reconstruciotn conditon in pp thm}-\eqref{slopey I condition in pp thm} in Theorem \ref{positive preserving theorem}
can be realized. First, it is easy to see that if the reconstructed  
initial data
 in the $x$- and $y$-directions are nonnegative, i.e.,
\begin{equation} \label{eq1 in limiter}
 \vec{\bm \psi} \cdot (\vec{\bm I}_{i,j}^n + \delta_x  \vec{\bm I}_{i,j}^n(x-x_i)) \geq 0,
 \qquad \forall x\in( x_{i-1/2}, x_{i+1/2}),
\end{equation}
\begin{equation} \label{eq2 in limiter}
\vec{\bm \psi} \cdot (\vec{\bm I}_{i,j}^n + \delta_y  \vec{\bm I}_{i,j}^n (y-y_j) ) \geq 0,
\qquad   \forall y\in( y_{j-1/2}, y_{j+1/2}),
\end{equation}
we can verify that the conditions \eqref{PN reconstruciotn conditon in pp thm}-\eqref{slopey I condition in pp thm} hold
under a reasonable time step.

 In fact, with the nonnegative reconstructions \eqref{eq1 in limiter} and \eqref{eq2 in limiter}, by setting $x=x_i,y=y_j$, we first find the nonnegative $P_N$-reconstruction for the angular variable, i.e., $\vec{\bm \psi} \cdot \vec{\bm I}_{i,j}^n \geq 0$.
This immediately implies ${\bm\langle} (1\pm \xi)\vec{\bm \psi} \cdot \vec{\bm I}^n {\bm\rangle}_{1,4~\text{or}~2,3} \geq 0$ and
${\bm\langle} (1\pm \eta)\vec{\bm \psi} \cdot \vec{\bm I}^n {\bm\rangle}_{1,2~\text{or}~3,4} \geq 0$.
 Based on these results, it is easy to verify that the condition \eqref{PN reconstruciotn conditon in pp thm} holds for the case  $\tilde{\alpha}^{n+1,s}_{i\pm1/2,j}=\tilde{\alpha}^{n+1,s}_{i,j\pm1/2}$.
Then, if we follow the nonnegative piecewise linear reconstruction for the spatial variable in
\eqref{I 0 reconstrution}-\eqref{rho reconstrution}
 and assume that $\Delta t$ satisfies
\begin{equation}\label{time step constraint of limiter section}
 \max_{\substack{i'=i\pm1/2, ~j'=j\pm1/2}}~
  \frac{-\widetilde{b}^{n+1,s}_{i',j'}(\Delta t)} {\widetilde{\alpha}^{n+1,s}_{i',j'}(\Delta t)}
\leq \min\{\Delta x, \Delta y\},
\end{equation}
thus for
\begin{equation}\nonumber
x=\left\{\begin{array}{ll}
x_{i\pm1/2}\pm\frac{\tilde{b}^{n+1,s}_{i+1/2,j}}{\tilde{\alpha}^{n+1,s}_{i+1/2,j}}\xi,\quad if ~ \xi\geq0,
\\[3mm]
x_{i\pm1/2}\mp \frac{\tilde{b}^{n+1,s}_{i-1/2,j}}{\tilde{\alpha}^{n+1,s}_{i-1/2,j}}\xi,\quad if ~ \xi<0,
\end{array}\right.\qquad
y=\left\{\begin{array}{ll}
y_{i\pm1/2}\pm\frac{\tilde{b}^{n+1,s}_{i,j+1/2}}{\tilde{\alpha}^{n+1,s}_{i,j+1/2}}\eta,\quad if ~ \eta\geq0,
\\[3mm]
y_{i\pm1/2}\mp \frac{\tilde{b}^{n+1,s}_{i,j-1/2}}{\tilde{\alpha}^{n+1,s}_{i,j-1/2}}\eta,\quad if ~ \eta<0,
\end{array}\right.
\end{equation}
we have  $x\in( x_{i-1/2}, x_{i+1/2}), y\in( y_{j-1/2}, y_{j+1/2})$.

Substituting the above $x,y$ into \eqref{eq1 in limiter}-\eqref{eq2 in limiter} and multiplying $\xi, \eta$ respectively
in \eqref{eq1 in limiter}-\eqref{eq2 in limiter}, we integrate with respect to the angular variable to verify
that the conditions in \eqref{slopex I condition in pp thm}-\eqref{slopey I condition in pp thm} hold. This implies that,
under the time step constraint \eqref{time step constraint of limiter section}, the
   conditions \eqref{PN reconstruciotn conditon in pp thm}-\eqref{slopey I condition in pp thm} hold automatically
   if the piecewise linear reconstruction of the initial intensities are nonnegative. We remark here that although
   the added time step constraint \eqref{time step constraint of limiter section} is needed, it will be
   clarified in Remark \ref{remark in limiter sec: time step constraint} that \eqref{time step constraint of limiter section} is
   in fact not restrictive.

Generally, the reconstruction of the initial intensity could be negative. In order to make
the conditions \eqref{PN reconstruciotn conditon in pp thm}-\eqref{slopey I condition in pp thm} valid in this case,
the scaling limiters will be used in this paper. Notice that the condition \eqref{PN reconstruciotn conditon in pp thm} also appears
in the positive-preserving needing conditions of our previous constructed first-order scheme \cite{Xu-Jiang-Sun-2021}. Therefore, as done there,
we tune the micro coefficients $\breve{\vec{\bm I}}^n _{i,j}$ uniformly, while keeping
the macro quantity $\rho_{i,j}^n$ unchanged, so as to make the condition \eqref{PN reconstruciotn conditon in pp thm} as well as the following conditions hold:
\begin{eqnarray}\label{limiter PN reconstruciotn conditon}
\nonumber
& & \qquad \frac{1}{4}\rho^n_{i,j}
+{\bm\langle}\xi\breve{\vec{\bm \psi}} {\bm\rangle}_{1,4}\cdot\breve{\vec{\bm I}}^n_{i,j}
 \geq 0,~~\qquad\qquad
  \frac{1}{4}\rho^n_{i,j}
-{\bm\langle}\xi\breve{\vec{\bm \psi}} {\bm\rangle}_{2,3}\cdot \breve{\vec{\bm I}}^n_{i,j}
\geq 0,~~
\\
 & &
 \qquad \frac{1}{4}\rho^n_{i,j}
+{\bm\langle}\eta\breve{\vec{\bm \psi}} {\bm\rangle}_{1,2}\cdot\breve{\vec{\bm I}}^n_{i,j}
\geq 0,~~
\qquad\qquad
\frac{1}{4}\rho^n_{i,j}
-{\bm\langle}\eta\breve{\vec{\bm \psi}} {\bm\rangle}_{3,4}\cdot\breve{\vec{\bm I}}^n_{i,j}
 \geq 0.
\end{eqnarray}
This can be achieved
 by using the linear scaling (ls) limiter from \cite{{Zhang-Shu-2010-scaling-limiter}}. That is, one replaces $\breve{\vec{\bm I}}^n _{i,j}$
by $\lambda^{1}_{i,j} \breve{\vec{\bm I}}^n _{i,j}$,
 with $\lambda^{1}_{i,j}$ given by
\begin{equation}\label{limiter 1}
\lambda^{1}_{i,j} = \text{argmax}_{ ~\lambda\in[0,1]}\{\lambda: \lambda \breve{\vec{\bm I}}^n _{i,j}
~ \text{satisfies conditions in } ~\eqref{PN reconstruciotn conditon in pp thm} ~\text{and}~ \eqref{limiter PN reconstruciotn conditon}  \}.
\end{equation}

Since $\rho_{i,j}^n\geq 0$, the positive limiter in \eqref{limiter 1} is realizable.
With this modified $P_N$-reconstruciton (here we slightly abuse the notation and
still denote it by $\vec{\bm \psi}\cdot\vec{\bm I}_{i,j}^n$),
we modify the reconstruction slopes $\delta_x\vec{\bm I}_{i,j}^n$ and $\delta_y\vec{\bm I}_{i,j}^n$ to make
\eqref{slopex I condition in pp thm} and \eqref{slopey I condition in pp thm} hold, respectively.
For simplicity, the modifications are employed uniformly, i.e.,
one need replace $\delta_x\vec{\bm I}_{i,j}^n$ and $\delta_y\vec{\bm I}_{i,j}^n$
by $\lambda_{i,j}^{21}  \delta_x\vec{\bm I}^n _{i,j}$ and $\lambda_{i,j}^{22} \delta_y\vec{\bm I}^n _{i,j}$ respectively,
 with $\lambda^{21}_{i,j}, \lambda^{22}_{i,j}$ given by
\begin{equation}\label{limiter 21}
\lambda^{21}_{i,j} = \text{argmax}_{ ~\lambda\in[0,1]}\{\lambda: \lambda  \delta_x\vec{\bm I}^n _{i,j},
~ \text{satisfies conditions in }
\eqref{slopex I condition in pp thm}~\},
\end{equation}
\begin{equation}\label{limiter 22}
\lambda^{22}_{i,j} = \text{argmax}_{ ~\lambda\in[0,1]}\{\lambda: \lambda \delta_y\vec{\bm I}^n _{i,j}
~ \text{satisfies conditions in }
\eqref{slopey I condition in pp thm}~\}.
\end{equation}
Noticing that the modified $P_N$-reconstruction $\vec{\bm \psi}\cdot\vec{\bm I}_{i,j}^n$
satisfies the inequalities in \eqref{limiter PN reconstruciotn conditon}, which
can be rewritten as 
 $${\bm\langle}\xi\vec{\bm \psi} {\bm\rangle}_{1,4}\cdot\vec{\bm I}^n_{i,j}\geq 0,\quad
{\bm\langle}-\xi\vec{\bm \psi} {\bm\rangle}_{2,3}\cdot\vec{\bm I}^n_{i,j}\geq 0,\quad
{\bm\langle}\eta\vec{\bm \psi} {\bm\rangle}_{1,2}\cdot\vec{\bm I}^n_{i,j}\geq 0,\quad
{\bm\langle}-\eta\vec{\bm \psi} {\bm\rangle}_{3,4}\cdot\vec{\bm I}^n_{i,j}\geq 0.$$
Thus, the limiters in \eqref{limiter 21} and \eqref{limiter 22} are realizable, and the conditions in
\eqref{slopex I condition in pp thm}-\eqref{slopey I condition in pp thm} are hence satisfied.

We remark that using the positive $P_N$ closures in \cite{Positive-PN-2010} or the positive filtered $P_N$ closures in
\cite{Laiu-positive-FPN-2016} can also yield the desired positive scheme. It is, however, computationally expensive.
So, following the construction procedure in \cite{Laiu-AP-PP-FPN-2019} and of our previous first-order scheme \cite{Xu-Jiang-Sun-2021},
here we apply much cheap linear scaling limiters to achieve the positive property.

By incorporating the above positive-preserving ($PP$) limiters into the $FP_N$-based UGKS, we obtain the so-called {\it $PPFP_N$-based UGKS}
by following almost the same procedure as in the "Loop of the $FP_N$-based UGKS" in Subsection \ref{subsec: loop of $FP_N$-based UGKS}
  except that, before solving the linearized macro equations
 \eqref{nonlinear iteration of discrete macro eqaution-1-UGKS}-\eqref{nonlinear iteration of discrete macro eqaution-2-UGKS}
 and the micro equations \eqref{discrete micro equation}, the micro coefficients $\breve{\vec{\bm I}}^n _{i,j}$ in the
fluxes have to be replaced by $\lambda^{1}_{i,j}\breve{\vec{\bm I}}^n_{i,j}$ for each spatial cell $(i,j)$ first, and then replace
$\delta_x\vec{\bm I}_{i,j}^n, \delta_y\vec{\bm I}_{i,j}^n$ by
$\lambda_{i,j}^{21}\delta_x\vec{\bm I}^n_{i,j}, \lambda_{i,j}^{22} \delta_y\vec{\bm I}^n_{i,j}$.
\begin{rem}\label{remark in limiter sec: time step constraint}
For the optically thin case ($\epsilon=O(1), \sigma\rightarrow 1$),
the condition \eqref{time step constraint of limiter section} reduces to $\Delta t \leq \frac{2\epsilon}{c}\min\{\Delta x, \Delta y\}$, which is  induced by the fact that $\frac{\tilde{b}}{\tilde{\alpha}}\rightarrow -\frac{c\Delta t}{2\epsilon}$. Thus, in combination
with \eqref{time constr after pp theorem}, the final positive preserving time step constraint takes the following form:
\begin{equation}
\Delta t \leq \frac{\epsilon}{4c}\cdot\min\{\Delta x, \Delta y\}.
\end{equation}
On the other hand, for the optically thick case  ($\epsilon\rightarrow 1, \sigma=O(1)$),
 as $\frac{\tilde{b}}{\tilde{\alpha}}$ tends to zero, we see that the condition \eqref{time step constraint of limiter section} actually
 does not impose any constraints on the time step.
\end{rem}

 \subsection{Asymptotic analysis}\label{asymptotic analysis}
In this subsection we analyze the asymptotic preserving (AP) property of the above developed $PPFP_N$-based UGKS scheme.
In fact, the asymptotic behavior of the algorithm in the small $\epsilon$ limit is completely determined
by the coefficient functions, which are given by
\begin{propo}\label{Proposition of coefficients' property}
Let $\sigma$ be positive  and $\Delta t = O(1)$. Then as $\epsilon\to 0$, we have

$\bullet$  $\tilde{\alpha}(\Delta t, \epsilon, \sigma, \nu)$  and $\tilde{b}(\Delta t, \epsilon, \sigma, \nu)$ tend to $0$;

$\bullet$ $\frac{\epsilon}{c} \tilde{c}(\Delta t, \epsilon, \sigma,\nu)$ tends to $\frac{1}{4\pi}$;

$\bullet$  $\tilde{d}(\Delta t, \epsilon, \sigma, \nu)$ tends to $-\frac{c}{4\pi\sigma}$.

\end{propo}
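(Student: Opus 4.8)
The plan is to substitute the explicit definition $\nu = c\sigma/\epsilon^2$ into each of the four coefficient functions in \eqref{Definition of coefficients} and then pass to the limit $\epsilon\to 0$. Since $\sigma>0$ and $\Delta t=O(1)$ are held fixed, the product $\nu\Delta t = c\sigma\Delta t/\epsilon^2 \to +\infty$, so every exponential factor $e^{-\nu\Delta t}$ decays to zero faster than any power of $\epsilon$. I would first record the two elementary facts that will be used repeatedly: $e^{-\nu\Delta t}\to 0$ and $(\nu\Delta t)^k e^{-\nu\Delta t}\to 0$ for every fixed $k\geq 0$ as $\epsilon\to 0$. Everything else reduces to simplifying the prefactors with $\nu=c\sigma/\epsilon^2$ and reading off the limits.

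For $\tilde{\alpha}$ and $\tilde{b}$, I would factor out the prefactor and rewrite it purely in terms of $\epsilon$ and $\sigma$. In $\tilde{\alpha}=\frac{c}{\epsilon\Delta t\nu}(1-e^{-\nu\Delta t})$ the prefactor becomes $\frac{c}{\epsilon\Delta t\nu}=\frac{\epsilon}{\Delta t\,\sigma}$ while the bracket tends to $1$, so $\tilde{\alpha}\to 0$ at rate $O(\epsilon)$. In $\tilde{b}=-\frac{c^2}{\epsilon^2\nu^2\Delta t}(1-e^{-\nu\Delta t}-\nu\Delta t\,e^{-\nu\Delta t})$ the prefactor becomes $-\frac{\epsilon^2}{\Delta t\,\sigma^2}$ and the bracket tends to $1$ (here one uses $\nu\Delta t\,e^{-\nu\Delta t}\to 0$), so $\tilde{b}\to 0$ at rate $O(\epsilon^2)$.

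For $\tilde{c}$ and $\tilde{d}$ the same substitution shows the prefactors are of order $1/\epsilon$ and $O(1)$, respectively. For $\tilde{c}$ the prefactor simplifies to $\frac{c}{4\pi\Delta t\,\epsilon}$, and the bracket $\Delta t-\frac{1}{\nu}(1-e^{-\nu\Delta t})$ tends to $\Delta t$ (because $\frac{1}{\nu}\to 0$), giving $\tilde{c}\to\frac{c}{4\pi\epsilon}$; multiplying by $\frac{\epsilon}{c}$ yields the claimed limit $\frac{1}{4\pi}$. For $\tilde{d}$ the prefactor simplifies to $-\frac{c}{4\pi\Delta t\,\sigma}$, and the bracket $\Delta t(1+e^{-\nu\Delta t})-\frac{2}{\nu}(1-e^{-\nu\Delta t})$ tends to $\Delta t$, so $\tilde{d}\to-\frac{c}{4\pi\sigma}$, as asserted.

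There is no genuine analytic obstacle: all four limits collapse to the single observation that $\nu\to\infty$ forces the exponential and $1/\nu$ remainders to vanish, after which the values are read off from the simplified prefactors. The only point requiring care is the bookkeeping of the powers of $\epsilon$, so that the $1/\epsilon$ blow-up of $\tilde{c}$ is correctly compensated by the $\epsilon/c$ normalization in the statement, and so that one confirms no hidden leading-order cancellation occurs inside the brackets for $\tilde{c}$ and $\tilde{d}$ (none does: each bracket converges directly to $\Delta t$, with the $e^{-\nu\Delta t}$ and $1/\nu$ terms entering only at subleading order).
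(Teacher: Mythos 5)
Your proof is correct and is essentially the argument the paper relies on: the paper states this proposition without an explicit proof, treating it as a direct consequence of the coefficient definitions in \eqref{Definition of coefficients}, and your computation—substituting $\nu = c\sigma/\epsilon^2$, noting $\nu\Delta t\to\infty$ so that $e^{-\nu\Delta t}$ and $\nu\Delta t\,e^{-\nu\Delta t}$ vanish, and reading off the simplified prefactors $\frac{\epsilon}{\Delta t\sigma}$, $-\frac{\epsilon^2}{\Delta t\sigma^2}$, $\frac{c}{4\pi\Delta t\,\epsilon}$, $-\frac{c}{4\pi\Delta t\,\sigma}$—is exactly that computation, carried out correctly, including the compensation of the $1/\epsilon$ growth of $\tilde{c}$ by the $\epsilon/c$ factor.
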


  With Proposition \ref{Proposition of coefficients' property} in hand, we can show the desired asymptotic preserving property of the proposed numerical scheme in the following theorem.

\begin{thm}\label{A-P theorem}
Let $\sigma$ be positive,   and  $(\Delta x, \Delta y, \Delta t) =O(1)$.  Then, as $\epsilon$ tends to zero,
the {\it $PPFP_N$-based UGKS} scheme
for the equations \eqref{Rewritten Gray Radiative transfer equaitons}
  approaches to a standard positive-preserving implicit
diffusion scheme for the diffusion limit equation \eqref{nonlinear diffusion limiting equation}.
\end{thm}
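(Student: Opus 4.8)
The plan is to pass to the limit $\epsilon\to 0$ in the discrete macro system \eqref{discrete macro equation} together with the flux expressions \eqref{flux Phi} and \eqref{flux Upsilon}, using the coefficient asymptotics collected in Proposition \ref{Proposition of coefficients' property}. First I would observe that, as $\epsilon\to 0$, both $\tilde{\alpha}$ and $\tilde{b}$ vanish, so every transport contribution in $\Phi^{n+1}_{i-1/2,j}$ and $\Upsilon^{n+1}_{i,j-1/2}$ built from $\vec{\bm I}^n$ (namely the terms carrying the factors $\tilde{\alpha}^{n+1}{\bm\langle}\xi\vec{\bm\psi}{\bm\rangle}$ and $\tilde{b}^{n+1}{\bm\langle}\xi^2\vec{\bm\psi}{\bm\rangle}$) drops out of the limit. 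Since the $PP$ linear scaling limiters act only on $\breve{\vec{\bm I}}^n_{i,j}$ and on the slopes $\delta_x\vec{\bm I}^n_{i,j},\delta_y\vec{\bm I}^n_{i,j}$, i.e. precisely on the quantities multiplying $\tilde{\alpha}$ and $\tilde{b}$, they do not influence the limiting equation, which is exactly what lets the positivity machinery coexist with the AP property.

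Second, I would isolate the surviving part of the fluxes. Only the $\tilde{d}$-term remains, and using $\tilde{d}\to -\tfrac{c}{4\pi\sigma}$ together with $\kappa=\tfrac{\epsilon^2}{\epsilon^2+\Delta t\beta\sigma}\to 0$ (so $1-\kappa\to 1$ and $\delta_x(\kappa^{n+1}\phi^n)\to 0$), the macro flux collapses to the diffusive flux
$$\Phi^{n+1}_{i-1/2,j}\ \longrightarrow\ -\frac{c}{3\sigma_{i-1/2,j}}\,\delta_x\rho^{n+1}_{i-1/2,j} = -\frac{c}{3\sigma_{i-1/2,j}}\,\frac{\rho^{n+1}_{i,j}-\rho^{n+1}_{i-1,j}}{\Delta x},$$
and similarly $\Upsilon^{n+1}_{i,j-1/2}\to -\tfrac{c}{3\sigma_{i,j-1/2}}\delta_y\rho^{n+1}_{i,j-1/2}$. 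Note that $\delta_x\rho^{n+1}$ is the plain central difference of cell averages rather than a limited slope, so no limiter modifies the emerging diffusion coefficient.

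Third, and this is the crux, I must deal with the formally singular relaxation term $\tfrac{\sigma c\Delta t}{\epsilon^2}(\phi^{n+1}_{i,j}-\rho^{n+1}_{i,j})$ in the first macro equation. The remedy is an algebraic elimination: the second equation of \eqref{discrete macro equation} gives $\tfrac{\sigma\Delta t}{\epsilon^2}(\rho^{n+1}_{i,j}-\phi^{n+1}_{i,j}) = \tfrac{1}{\beta^{n+1}_{i,j}}(\phi^{n+1}_{i,j}-\phi^n_{i,j})$, whence the singular term equals the finite quantity $-\tfrac{c}{\beta^{n+1}_{i,j}}(\phi^{n+1}_{i,j}-\phi^n_{i,j})$. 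Substituting it into the first equation cancels the $\epsilon^{-2}$ factor and yields
$$(\rho^{n+1}_{i,j}-\rho^n_{i,j}) + \frac{c}{\beta^{n+1}_{i,j}}(\phi^{n+1}_{i,j}-\phi^n_{i,j}) = \frac{\Delta t}{\Delta x}(\Phi^{n+1}_{i-1/2,j}-\Phi^{n+1}_{i+1/2,j}) + \frac{\Delta t}{\Delta y}(\Upsilon^{n+1}_{i,j-1/2}-\Upsilon^{n+1}_{i,j+1/2}).$$
At the same time, the second equation at leading order forces the local equilibrium $\rho^{(0)}=\phi^{(0)}=ac(T^{(0)})^4$, equivalently $\rho^{n+1}-\phi^{n+1}=O(\epsilon^2)$, consistent with the isotropic closure $\breve{\vec{\bm I}}^{n+1}=O(\epsilon^2)$ produced by the micro equation \eqref{discrete micro equation}.

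Finally, I would substitute $\rho=\phi=acT^4$ together with $\beta=4acT^3/C_\nu$, which linearizes $\tfrac{c}{\beta}(\phi^{n+1}_{i,j}-\phi^n_{i,j})$ into $cC_\nu(T^{n+1}_{i,j}-T^n_{i,j})$ at leading order, and divide through by $c$. The reduced fluxes of step two then assemble into the standard discrete form of $\nabla_{\bf r}\cdot\tfrac{ac}{3\sigma}\nabla_{\bf r}(T^4)$, while the time-difference terms become $C_\nu\partial_t T + a\partial_t(T^4)$, so the limiting update is a consistent implicit discretization of \eqref{nonlinear diffusion limiting equation}. Positivity of this limit is immediate from the $M$-matrix argument already used in the proof of Theorem \ref{positive preserving theorem}: since $\tilde{d}\le 0$, the off-diagonal entries of the limiting coefficient matrix are nonpositive while the matrix stays strictly diagonally dominant, so its inverse is nonnegative. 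The main obstacle is thus the singular $\epsilon^{-2}$ source; everything else is bookkeeping once the two-equation combination removes it and the equilibrium $\rho^{(0)}=\phi^{(0)}$ is invoked, with the only delicate point being the nonlinear Planckian linearization via $\beta$ needed to recover exactly the $C_\nu\partial_t T + a\partial_t(T^4)$ structure.
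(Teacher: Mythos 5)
Your proposal is correct and takes essentially the same route as the paper: the paper's own proof likewise observes that $\tilde{\alpha},\tilde{b}\to 0$ (so the scaling limiters, which act only on $\breve{\vec{\bm I}}^n_{i,j}$ and the slopes, become inert) and that $\kappa\to 0$, and then delegates the remaining computation---precisely the stiff-term elimination via the second macro equation and the collapse of the fluxes to the $\tilde{d}$-part that you carry out explicitly---to the proof of Theorem 4.1 in \cite{Xu-Jiang-Sun-2021}. One cosmetic slip: the discrete micro equation \eqref{discrete micro equation} gives $\breve{\vec{\bm I}}^{n+1}_{i,j}=O(\epsilon)$ (since the $\tilde{c}$-fluxes are $O(1/\epsilon)$ while the relaxation coefficient is $O(1/\epsilon^2)$), not $O(\epsilon^2)$, but this side remark on isotropy plays no role in your derivation of the limiting positive implicit diffusion scheme.
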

\begin{proof}
 By Remark  \ref{remark in limiter sec: time step constraint}, we know that as $\epsilon\to 0$, there is no constraint on the time step.  And also from Proposition \ref{Proposition of coefficients' property}, one has
 $\tilde{\alpha}, \tilde{b} \stackrel{\epsilon\rightarrow 0}{\longrightarrow} 0$.
 Therefore, the positive-preserving limiters do not act on the solution at all for this optically thick limit case.
In other words, they can be omitted in this case. Thus, we only need to focus on analyzing the discrete macro and
micro equations \eqref{discrete macro equation}--\eqref{discrete micro equation}.

Recalling that $\kappa=\frac{\epsilon^2}{\epsilon^2+\Delta t\beta\sigma}\stackrel{\epsilon\rightarrow 0}{\longrightarrow} 0$,
and following almost the same process as in the proof of Theorem 4.1 in \cite{Xu-Jiang-Sun-2021}, we can obtain the asymptotic
preserving property of the proposed {\it $PPFP_N$-based UGKS} scheme as $\epsilon\to 0$. This completes the proof.

\end{proof}

\section{Simplified $PPFP_N$-based UGKS}  \label{section: $PPFP_N$-S}
The numerical fluxes in the above $PPFP_N$-based UGKS are of spatial second-order accuracy for
\eqref{Rewritten Gray Radiative transfer equaitons}.
Compared with the fluxes of the first-order scheme in \cite{Xu-Jiang-Sun-2021}, the flux expressions in \eqref{flux Phi}-\eqref{flux G} and \eqref{flux Upsilon}-\eqref{flux H} are quite complex. In this section we shall show that in the regime $\epsilon\ll 1$ and the regime
$\epsilon=O(1)$, the fluxes can be simplified, while the positive preserving and asymptotic preserving properties can be still kept.
Thus, a simplified  $PPFP_N$-based UGKS (abbreviated as the $PPFP_N$-based SUGKS) is obtained and the computational costs can be reduced greatly.

In order to simplify the fluxes in \eqref{flux Phi}-\eqref{flux G}, we denote
\begin{equation}\nonumber
\Phi^{n+1}_{i-1/2,j}( \tilde{b} ) = \tilde{b}^{n+1}_{i-1/2,j}\left(
{\bm\langle}\xi^2\vec{\bm \psi}{\bm\rangle}_{1,4} \cdot \delta_x\vec{\bm I}^n_{i-1,j}
 +{\bm\langle}\xi^2\vec{\bm \psi} {\bm\rangle}_{2,3}\cdot \delta_x\vec{\bm I}^n_{i,j} \right),
\end{equation}
\begin{equation}\nonumber
G_{i-1/2,j}(\tilde{b})= \tilde{b}^{n+1}_{i-1/2,j}\left(
{\bm\langle}\xi^2 \psi_{\ell}^m\vec{\bm \psi} {\bm\rangle}_{1,4}\cdot \delta_x\vec{\bm I}^n_{i-1,j}
+{\bm\langle}\xi^2\psi_{\ell}^m\vec{\bm \psi} {\bm\rangle}_{2,3}\cdot \delta_x\vec{\bm I}^n_{i,j}
\right),
\end{equation}
\begin{equation}\nonumber
G_{i-1/2,j}(\tilde{c})=\tilde{c}^{n+1}_{i-1/2,j}{\bm\langle}\xi \psi_{\ell}^m {\bm\rangle}~
\left( (\kappa^{n+1}\phi^n)_{i-1/2,j} +(1-\kappa_{i-1/2,j}^{n+1})\rho_{i-1/2,j}^{n+1} \right),
\end{equation}
where function $G$ 
denotes the components of the vector $\vec{\bf G}$ for $1\leq \ell \leq N, -\ell \leq m \leq \ell$.
We point out that the other fluxes can be handled in a similar way.
 In the following, we use  "$\lesssim$" to denote  "$\leq C$" with $C$ being a nonnegative constant independent of
 $\epsilon$, $\Delta x$, $\Delta y$ and $\Delta t$. By a detailed derivation that is given in Appendix \ref{section: appendix},
 we obtain the following lemma:
\begin{lem}\label{lemma: analysis of fluxes}
Assume that $\sigma, C_{\nu}$  and the solution of \eqref{Rewritten Gray Radiative transfer equaitons}  are sufficiently
smooth. Then, it holds that
\begin{eqnarray}\label{eq1-analysis of fluxes}
\Big|\frac{ \Phi^{n+1}_{i-1/2,j}(\tilde{b})- \Phi^{n+1}_{i+1/2,j}(\tilde{b}) }{\Delta x}
+\frac{ \Upsilon^{n+1}_{i,j-1/2}(\tilde{b})- \Upsilon^{n+1}_{i,j+1/2}(\tilde{b}) }{\Delta y}\Big|
\lesssim  \Delta t,
\\[2mm]\label{eq2-analysis of fluxes}
\Big|\frac{ G_{i-1/2,j}(\tilde{b})- G_{i+1/2,j}(\tilde{b}) }{\Delta x}
+\frac{ H_{i,j-1/2}(\tilde{b})- H_{i,j+1/2}(\tilde{b}) }{\Delta y}\Big|
\lesssim \Delta t,
\end{eqnarray}
in the regimes $\epsilon \ll 1$ and $\epsilon=O(1)$.  And
\begin{eqnarray}\label{eq3-analysis of fluxes}
\hspace{-4mm}
\epsilon^2\Big|\frac{ G_{i-1/2,j}(\tilde{c})- G_{i+1/2,j}(\tilde{c}) }{\Delta x} +\frac{ H_{i,j-1/2}(\tilde{c})- H_{i,j+1/2}(\tilde{c}) }{\Delta y}\Big|\lesssim \Delta t \;\;\text{in the regime} ~\epsilon \ll 1, \;\;
\\[2mm]\label{eq4-analysis of fluxes}
\hspace{-4mm}
\Big|\frac{ G_{i-1/2,j}(\tilde{c})- G_{i+1/2,j}(\tilde{c}) }{\Delta x}+\frac{ H_{i,j-1/2}(\tilde{c})- H_{i,j+1/2}(\tilde{c}) }{\Delta y} \Big|\lesssim  \Delta t \;\;\text{in the regime}~ \epsilon=O(1).\;\;
\end{eqnarray}
\end{lem}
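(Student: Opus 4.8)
The plan is to reduce the four estimates to the scalar size of the coefficient functions $\tilde{b}$ and $\tilde{c}$ from \eqref{Definition of coefficients}. The angular integrals ${\bm\langle}\xi^{2}\vec{\bm\psi}{\bm\rangle}_{1,4}$, ${\bm\langle}\xi\psi_{\ell}^{m}{\bm\rangle}$, and the like are fixed $O(1)$ constants, and under the smoothness hypothesis the slopes $\delta_{x}\vec{\bm I}^{n},\delta_{y}\vec{\bm I}^{n}$, their first differences, the interface values of $\kappa^{n+1}\phi^{n}$ and $\rho^{n+1}$, and the spatial variation of $\sigma$ (hence of $\tilde{b},\tilde{c},\kappa$) are all uniformly bounded. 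Each divided difference such as $\big(\Phi^{n+1}_{i-1/2,j}(\tilde{b})-\Phi^{n+1}_{i+1/2,j}(\tilde{b})\big)/\Delta x$ is therefore a discrete analogue of $-\partial_{x}\big(\tilde{b}\,\partial_{x}(\cdot)\big)$ and is controlled by $|\tilde{b}|+|\partial_{x}\tilde{b}|$ times smooth $O(1)$ factors, with $\partial_{x}\tilde{b}$ of the same order as $\tilde{b}$ or smaller; the same holds for the $\tilde{c}$ fluxes. So it suffices to estimate $\tilde{b}$, $\tilde{c}$ (and $\epsilon^{2}\tilde{c}$) in the two regimes. Setting $\tau:=\nu\Delta t=c\sigma\Delta t/\epsilon^{2}$, I would first record the algebraically convenient forms
$$\tilde{b}=-\frac{c}{\sigma}\,\frac{g(\tau)}{\tau},\qquad \tilde{c}=\frac{\epsilon}{4\pi\sigma\Delta t}\,h(\tau),\qquad g(\tau)=1-e^{-\tau}-\tau e^{-\tau},\qquad h(\tau)=\tau-1+e^{-\tau},$$
where $g,h\ge 0$ and $g(\tau),h(\tau)\le\tau^{2}/2$.

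In the regime $\epsilon=O(1)$ the time step obeys the CFL bound \eqref{time constr after pp theorem}, so $\tau=O(\Delta t)$ is small; Taylor expansion gives $g(\tau)=\tau^{2}/2+O(\tau^{3})$ and $h(\tau)=\tau^{2}/2+O(\tau^{3})$, hence $\tilde{b}=-\tfrac{c^{2}\Delta t}{2\epsilon^{2}}+O(\Delta t^{2})$ and $\tilde{c}=\tfrac{c^{2}\sigma\Delta t}{8\pi\epsilon^{3}}+O(\Delta t^{2})$, both $O(\Delta t)$. Inserting these into the divided-difference bounds of the first paragraph immediately yields \eqref{eq1-analysis of fluxes}, \eqref{eq2-analysis of fluxes} and \eqref{eq4-analysis of fluxes}, the smooth prefactors being harmless.

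In the regime $\epsilon\ll 1$ one has $\tau\to\infty$, with $g(\tau)\to 1$ and $h(\tau)\sim\tau$, so $\tilde{b}\sim-\epsilon^{2}/(\sigma^{2}\Delta t)$ and $\tilde{c}\sim c/(4\pi\epsilon)$, consistent with Proposition \ref{Proposition of coefficients' property}. Under the diffusive time-step scaling $\Delta t=O(1)$ (no CFL restriction in the thick case, Remark \ref{remark: time step constraint}) this makes $\tilde{b}=O(\epsilon^{2})\lesssim\Delta t$, giving \eqref{eq1-analysis of fluxes}--\eqref{eq2-analysis of fluxes}. The coefficient $\tilde{c}$, however, is singular of order $1/\epsilon$, so \eqref{eq3-analysis of fluxes} cannot follow from the coefficient alone: here I would use the $\epsilon^{2}$ weight, which turns $\epsilon^{2}\tilde{c}$ into $O(\epsilon)$, together with the near-equilibrium structure $\kappa=\epsilon^{2}/(\epsilon^{2}+\Delta t\beta\sigma)=O(\epsilon^{2})$, so that the interface combination $(\kappa^{n+1}\phi^{n})_{i-1/2,j}+(1-\kappa^{n+1}_{i-1/2,j})\rho^{n+1}_{i-1/2,j}=\rho^{n+1}_{i-1/2,j}+O(\epsilon^{2})$ is smooth and $O(1)$; its divided difference is $O(1)$ and the $\epsilon^{2}$ prefactor yields $O(\epsilon)\lesssim\Delta t$.

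\emph{The main obstacle} is exactly the singular thick-regime estimate \eqref{eq3-analysis of fluxes}. The delicate point is that the raw flux $G_{i-1/2,j}(\tilde{c})$ grows like $1/\epsilon$, so one must simultaneously exploit the explicit $\epsilon^{2}$ weight, the $O(\epsilon^{2})$ smallness of $\kappa$ (which collapses $\kappa\phi^{n}+(1-\kappa)\rho$ to $\rho$ up to $O(\epsilon^{2})$), and the spatial smoothness of $\sigma,\kappa,\rho$ in order to convert an a priori $O(1/\epsilon)$ quantity into an $O(\epsilon)$ one. Balancing these competing powers of $\epsilon$, and justifying rigorously that the discrete divided differences behave like the corresponding smooth derivatives, is where the real work lies; I would carry it out term by term, as deferred to Appendix \ref{section: appendix}.
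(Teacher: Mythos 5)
Your proposal is correct and follows essentially the same route as the paper's appendix: each divided difference is reduced, via smoothness and the mean value theorem, to a bound of the form $|\tilde{b}|+|\partial_x\tilde{b}|$ (resp. $|\tilde{c}|+|\partial_x\tilde{c}|$) times smooth $O(1)$ factors, and then the coefficients are estimated regime by regime, concluding with $\epsilon\ll\Delta t$. Your parametrization $\tau=\nu\Delta t$ with $g(\tau)$, $h(\tau)$ is merely a cleaner packaging of the Taylor/limit computations the paper performs directly on \eqref{Definition of coefficients}, and your extra appeal to $\kappa=O(\epsilon^2)$ for \eqref{eq3-analysis of fluxes} is unnecessary: since $\kappa\in[0,1]$ and $\kappa\phi^n+(1-\kappa)\rho^{n+1}$ and its derivative are $O(1)$ by smoothness, the weight $\epsilon^2$ against $\tilde{c},\partial_x\tilde{c}=O(1/\epsilon)$ already gives $O(\epsilon)\lesssim\Delta t$, which is exactly the paper's argument.
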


Now with the help of Lemma \ref{lemma: analysis of fluxes}, in the regimes $ \epsilon\ll 1$ and $\epsilon=O(1)$, one can estimate the contribution of each term on the left hand sides of \eqref{eq1-analysis of fluxes}-\eqref{eq4-analysis of fluxes}
to the final solution.
For this, we shall analyze the contribution to the macro solution and micro solution
respectively.
Firstly, by the linearized macro equations \eqref{nonlinear iteration of discrete macro eqaution-1-UGKS}, we use the notion
$$
 \Delta \vec{\bm \rho}^{~s+1}=(\Delta \rho^{n+1,s+1}_{1,1}, \Delta \rho^{n+1,s+1}_{1,2},..., \Delta \rho^{n+1,s+1}_{2,1}, \Delta \rho^{n+1,s+1}_{2,2},...  )'.
$$
 to represent the solution of the following equation in matrix form:
\begin{equation}\nonumber
{\bm A}^s \Delta \vec{\bm \rho}^{~s+1} = \vec{ \bm z}^{~s},
\end{equation}
where ${\bm A}^s$ is the coefficient matrix of \eqref{nonlinear iteration of discrete macro eqaution-1-UGKS}, and
$\vec{ \bm z}^{~s}=( z_{1,1}^{n+1,s},z_{1,2}^{n+1,s},..., z_{2,1}^{n+1,s},z_{2,2}^{n+1,s}, ... )' $
with
$$z_{i,j}^{n+1,s}= \Delta t\left(\frac{ \Phi^{n+1,s+1}_{i-1/2,j}(\tilde{b})- \Phi^{n+1,s+1}_{i+1/2,j}(\tilde{b}) }{\Delta x}
+\frac{ \Upsilon^{n+1,s+1}_{i,j-1/2}(\tilde{b})- \Upsilon^{n+1,s+1}_{i,j+1/2}(\tilde{b}) }{\Delta y}\right).$$
Namely, $\Delta \rho^{n+1,s+1}_{i,j}$ shows the contribution
of the  term in \eqref{eq1-analysis of fluxes}
to the macro solution $\rho_{i,j}^{n+1}$.
Since the coefficient matrix ${\bm A}^s$ can be decomposed into ${\bm A}^s = {\bm E}-{\bm B}^s$
with ${\bm E}$ being the identity matrix, and ${\bm B}^s$ satisfying ${\bm B}^s(i,j)\geq 0(i\neq j)$ and
$\sum_j {\bm B}^s(i,j)\leq 0$ for all $i$, we see that $\|({\bm A}^s)^{-1}\|_{\infty} \leq 1$.
Consequently,
$$\| \Delta \vec{\bm \rho}^{~s+1}\|_{\infty} = \| ({\bm A}^s)^{-1} \vec{ \bm z}^{~s}\|_{\infty} \leq
 \| ({\bm A}^s)^{-1}\|_{\infty} \|\vec{ \bm z}^{~s}\|_{\infty} \leq \|\vec{ \bm z}^{~s}\|_{\infty}. $$
From \eqref{eq1-analysis of fluxes} one gets $\|\vec{ \bm z}^{~s}\|_{\infty} \lesssim (\Delta t)^2$.
Therefore, $\big|\Delta \rho_{i,j}^{n+1,s+1} \big| \lesssim (\Delta t)^2$ follows immediately.
In addition, from \eqref{nonlinear iteration of discrete macro eqaution-2-UGKS}, the contribution to the macro solution $\phi_{i,j}^{n+1}$ reads as
$$
\Delta \phi_{i,j}^{n+1,s+1} \equiv
 (1- \kappa_{i,j}^{n+1,s}) \Delta \rho_{i,j}^{n+1,s+1}.
$$
Since $\kappa = \frac{\epsilon^2}{ \epsilon^2+\Delta t \beta\sigma}\in [0,1], $ thus
$ \big|\Delta \phi_{i,j}^{n+1,s+1} \big| \lesssim (\Delta t)^2 $ holds.

Secondly, in view of \eqref{discrete micro equation}, that
\begin{eqnarray}\label{solution I of micro eq}
 \nonumber
\hspace{-6mm}\breve{\vec{\bm I}}^{n+1}_{i,j}  &=&
\left( (1+ \frac{\sigma_{i,j}^{n+1}c \Delta t}{\epsilon^2}){\bf E} +\Delta t \sigma_f \breve{{\bf F}}\right)^{-1}\times \\
& &\qquad
\left\{ \breve{\vec{\bm I}}^{n}_{i,j} + \Delta t \left(
\frac{ \vec{\bm G}_{i-1/2,j} - \vec{\bm G}_{i+1/2,j}} {\Delta x}
+  \frac{ \vec{\bm H}_{i,j-1/2}-\vec{\bm H}_{i,j+1/2} } { \Delta y } \right) \right\}.
\end{eqnarray}
 Recalling that $\breve{{\bf F}}$ is a diagonal matrix, from \eqref{solution I of micro eq}, the contribution of the term in \eqref{eq2-analysis of fluxes} to the micro solution $(I_{\ell}^m)_{i,j}^{n+1}$
  takes the following form
\begin{eqnarray}
 \nonumber
\Delta_{\tilde{b}} (I_{\ell}^m)_{i,j}^{n+1}
 &\equiv&
  \frac{\Delta t}{1+ \frac{\sigma_{i,j}^{n+1}c \Delta t}{\epsilon^2} +\Delta t \sigma_f {\bf F}_{(\ell,m),(\ell,m)} }\times
\\\nonumber
& & \qquad \left(
 \frac{{G}_{i-1/2,j}(\widetilde{b})-G_{i+1/2,j}(\widetilde{b})}{ \Delta x }
+ \frac{ H_{i,j-1/2}(\widetilde{b})- H_{i,j+1/2}(\widetilde{b}) }{\Delta y } \right) .
\end{eqnarray}
In view of \eqref{eq2-analysis of fluxes} and the fact that ${\bf F}_{(\ell,m),(\ell,m)}$ is nonnegative, we immediately get
\begin{equation}\nonumber
\big|\Delta_{\tilde{b}} (I_{\ell}^m)_{i,j}^{n+1} \big| \lesssim (\Delta t)^2.
\end{equation}
Futhermore, for
\begin{eqnarray}
 \nonumber
\Delta_{\tilde{c}} (I_{\ell}^m)_{i,j}^{n+1}
 &\equiv& 
   \frac{\Delta t}{1+ \frac{\sigma_{i,j}^{n+1}c \Delta t}{\epsilon^2} +\Delta t \sigma_f {\bf F}_{(\ell,m),(\ell,m)} }\times
\\\nonumber
& & \qquad \left(
 \frac{{G}_{i-1/2,j}(\widetilde{c})-G_{i+1/2,j}(\widetilde{c})}{ \Delta x }
+ \frac{ H_{i,j-1/2}(\widetilde{c})- H_{i,j+1/2}(\widetilde{c}) }{\Delta y } \right) .
\end{eqnarray}
Similarly we can obtain $\big|\Delta_{\tilde{c}} (I_{\ell}^m)_{i,j}^{n+1} \big| \lesssim (\Delta t)^2$.
Thus, we summarize the results in the following theorem:
\begin{thm}\label{therom: dt square}
 Assume that $\sigma, C_{\nu}$  and the solution of \eqref{Rewritten Gray Radiative transfer equaitons}  are sufficiently
smooth. Then, in the regime $\epsilon\ll 1$ and the regime
$\epsilon=O(1)$, it holds that
\begin{equation}\label{result 1 of therom: dt square}
\big|\Delta \phi_{i,j}^{n+1,s+1} \big| \lesssim (\Delta t)^2,\qquad\qquad~
\big|\Delta \rho_{i,j}^{n+1,s+1} \big| \lesssim (\Delta t)^2,
\end{equation}
and
\begin{equation}\label{result 2 of therom: dt square}
\big|\Delta_{\tilde{b}} (I_{\ell}^m)_{i,j}^{n+1} \big| \lesssim (\Delta t)^2,\qquad\qquad~
\big|\Delta_{\tilde{c}} (I_{\ell}^m)_{i,j}^{n+1} \big| \lesssim (\Delta t)^2.
\end{equation}
\end{thm}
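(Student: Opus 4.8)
The plan is to treat this theorem as an assembly step sitting on top of Lemma \ref{lemma: analysis of fluxes}: the flux-difference estimates \eqref{eq1-analysis of fluxes}--\eqref{eq4-analysis of fluxes} supply the ``input size,'' and two distinct stability mechanisms---the discrete macro solve and the pointwise implicit micro update---convert that input into a bound on the solution increments. I would therefore split the argument exactly as the isolating notation suggests: first the macro contributions $\Delta\rho_{i,j}^{n+1,s+1}$ and $\Delta\phi_{i,j}^{n+1,s+1}$, then the micro contributions $\Delta_{\tilde b}(I_\ell^m)_{i,j}^{n+1}$ and $\Delta_{\tilde c}(I_\ell^m)_{i,j}^{n+1}$.

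For the macro estimate \eqref{result 1 of therom: dt square} I would write the contribution of the $\tilde b$-flux difference as the linear system ${\bm A}^s\Delta\vec{\bm\rho}^{~s+1}=\vec{\bm z}^{~s}$ with $z_{i,j}^{n+1,s}=\Delta t\,(\tilde b\text{-flux difference})$. The structural fact I would lean on is that ${\bm A}^s={\bm E}-{\bm B}^s$ where ${\bm B}^s$ has nonnegative off-diagonal entries and nonpositive row sums; this is the same $M$-matrix/diagonal-dominance structure already exploited in the proof of Theorem \ref{positive preserving theorem}, and it delivers $\|({\bm A}^s)^{-1}\|_\infty\le 1$. Coupling this with \eqref{eq1-analysis of fluxes}, which gives $\|\vec{\bm z}^{~s}\|_\infty\lesssim(\Delta t)^2$ uniformly in both regimes, yields $|\Delta\rho_{i,j}^{n+1,s+1}|\lesssim(\Delta t)^2$. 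The estimate for $\phi$ is then immediate from the linear relation $\Delta\phi_{i,j}^{n+1,s+1}=(1-\kappa_{i,j}^{n+1,s})\Delta\rho_{i,j}^{n+1,s+1}$ induced by \eqref{nonlinear iteration of discrete macro eqaution-2-UGKS}, together with $\kappa\in[0,1]$.

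For the micro estimate \eqref{result 2 of therom: dt square} I would use the closed-form update \eqref{solution I of micro eq}, whose action on each mode $(\ell,m)$ is multiplication of the relevant flux difference by the scalar implicit damping factor $\Delta t/\big(1+\sigma c\Delta t/\epsilon^2+\Delta t\sigma_f{\bf F}_{(\ell,m),(\ell,m)}\big)$. Since the denominator is $\ge 1$, this factor is bounded by $\Delta t$; pairing that crude bound with \eqref{eq2-analysis of fluxes} settles $\Delta_{\tilde b}(I_\ell^m)$ in both regimes, and pairing it with \eqref{eq4-analysis of fluxes} settles $\Delta_{\tilde c}(I_\ell^m)$ in the regime $\epsilon=O(1)$, each producing the desired $(\Delta t)^2$.

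I expect the genuine obstacle to be $\Delta_{\tilde c}(I_\ell^m)$ in the regime $\epsilon\ll1$: here \eqref{eq3-analysis of fluxes} only controls $\epsilon^2$ times the flux difference, so the flux difference itself is of order $\Delta t/\epsilon^2$ and is singular as $\epsilon\to0$. The crude factor-bound $\le\Delta t$ is then useless, and the compensating $\epsilon^2$ must instead be extracted from the damping factor by rewriting it as $\Delta t\,\epsilon^2/\big(\epsilon^2+\sigma c\Delta t+\epsilon^2\Delta t\sigma_f{\bf F}_{(\ell,m),(\ell,m)}\big)$ and bounding the denominator below by its $\sigma c\Delta t$ term. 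The delicate point---and the reason Lemma \ref{lemma: analysis of fluxes} deliberately carries the $\epsilon^2$ weight in that regime---is that this pairing must make the $\epsilon^2$ in the numerator cancel the $\epsilon^{-2}$ from the flux, so that the resulting estimate is \emph{uniform in} $\epsilon$; securing that uniform cancellation is the heart of the argument. Finally, I would observe, as in the proof of Theorem \ref{A-P theorem} and via Proposition \ref{Proposition of coefficients' property}, that $\tilde\alpha,\tilde b\to0$ as $\epsilon\to0$, so the positivity limiters are inactive in the stiff regime and do not interfere with any of these bounds.
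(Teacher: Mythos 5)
Your macro argument and the easy micro cases coincide exactly with the paper's proof: the linear system ${\bm A}^s \Delta \vec{\bm \rho}^{~s+1} = \vec{ \bm z}^{~s}$ with $\|({\bm A}^s)^{-1}\|_{\infty}\le 1$ from the splitting ${\bm A}^s={\bm E}-{\bm B}^s$, paired with \eqref{eq1-analysis of fluxes}; the transfer to $\phi$ via $\Delta \phi_{i,j}^{n+1,s+1}=(1-\kappa_{i,j}^{n+1,s})\Delta\rho_{i,j}^{n+1,s+1}$ and $\kappa\in[0,1]$; and, on the micro side, the damping factor from \eqref{solution I of micro eq} with denominator $\ge 1$ paired with \eqref{eq2-analysis of fluxes} for $\Delta_{\tilde b}$ in both regimes and with \eqref{eq4-analysis of fluxes} for $\Delta_{\tilde c}$ when $\epsilon=O(1)$. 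All of that is correct and is precisely what the paper does.

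The gap is in the case you yourself call the heart of the argument, $\Delta_{\tilde c}(I_{\ell}^m)_{i,j}^{n+1}$ in the regime $\epsilon\ll 1$: the cancellation you describe, carried to its end, does not produce the claimed order. Your pairing gives
\begin{equation*}
\big|\Delta_{\tilde c} (I_{\ell}^m)_{i,j}^{n+1}\big|
\;\le\; \frac{\Delta t\,\epsilon^2}{\epsilon^2+\sigma c\Delta t+\epsilon^2\Delta t\sigma_f{\bf F}_{(\ell,m),(\ell,m)}}\,\big|\text{flux difference}\big|
\;\le\; \frac{\epsilon^2}{\sigma c}\cdot\frac{C\Delta t}{\epsilon^2}\;=\;\frac{C\Delta t}{\sigma c},
\end{equation*}
where the last inequality uses \eqref{eq3-analysis of fluxes}. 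This is indeed uniform in $\epsilon$, but it is $O(\Delta t)$ --- one full power of $\Delta t$ short of \eqref{result 2 of therom: dt square} --- and no rearrangement of these two ingredients can do better, since the lemma as stated allows the flux difference to be as large as $\Delta t/\epsilon^2$. To close the estimate one must use what the Appendix actually establishes rather than the statement of Lemma \ref{lemma: analysis of fluxes}: in this regime $\tilde c$ and its spatial derivatives are $O(1/\epsilon)$, so the $\tilde c$-flux difference is $O(1/\epsilon)$ and hence $\epsilon^2\times(\text{flux difference})=O(\epsilon)$; multiplied by the damping factor this yields $O(\epsilon/(\sigma c))$, which is $\lesssim(\Delta t)^2$ under the standing convention of the regime $\epsilon\ll 1$ that $\epsilon$ is negligible against the (fixed) discretization parameters --- the same convention the Appendix invokes as $\epsilon\ll\Delta t$. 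In fairness, the paper buries exactly this point behind the words ``Similarly we can obtain''; your proposal correctly isolates where the difficulty lives, but the mechanism you give does not by itself supply the second power of $\Delta t$.
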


Because the first-order discretization for the temporal variable is used in the scheme of the above section,
 and Theorem \ref{therom: dt square} implies that, in the regime $\epsilon\ll 1$ and the regime $\epsilon=O(1)$,
 the contribution of the corresponding terms in Theorem \ref{therom: dt square} to the final solutions are the high-order terms
 with respect to $\Delta t$, we can delete these terms from the fluxes directly to simplify the fluxes but without
 reducing the accuracy in the temporal variable. We call this simplification the simplified $PPFP_N$-based UGKS.
 As a result, in the regime $\epsilon\ll 1$ and the regime $\epsilon=O(1)$, for the $x$-direction as example,
 the simplified macro and micro fluxes read as
\begin{eqnarray} 
\nonumber
\mathring{\Phi}^{n+1}_{i-1/2,j}
&=&
\tilde{\alpha}^{n+1}_{i-1/2,j}\left\{  {\bm\langle}\xi\vec{\bm \psi}  {\bm\rangle}_{1,4}\cdot\left(\vec{\bm I}^n_{i-1,j}+
\frac{\Delta x}{2}\delta_x\vec{\bm I}^n_{i-1,j}\right)
+
{\bm\langle}\xi\vec{\bm \psi} {\bm\rangle}_{2,3}\cdot\left(\vec{\bm I}^n_{i,j}
-\frac{\Delta x}{2}\delta_x\vec{\bm I}^n_{i,j}\right)
\right\}\\\nonumber
& &
+\tilde{d}^{n+1}_{i-1/2,j}\frac{4\pi}{3}
\left(\delta_x (\kappa^{n+1}\phi^n)_{i-1/2,j} +(1-\kappa_{i-1/2,j}^{n+1})\delta_x\rho_{i-1/2,j}^{n+1} \right),
\end{eqnarray}
\begin{eqnarray}
\nonumber
{\mathring{\vec{\bf G}}}_{i-1/2,j}
&=&
\tilde{\alpha}^{n+1}_{i-1/2,j}
\left\{ {\bm\langle}\xi\breve{\vec{\bm \psi}}\vec{\bm \psi}' {\bm\rangle}_{1,4} \cdot
\left(\vec{\bm I}^n_{i-1,j}+
\frac{\Delta x}{2}\delta_x\vec{\bm I}^n_{i-1,j} \right)+ {\bm\langle}\xi\breve{\vec{\bm \psi}}\vec{\bm \psi}' {\bm\rangle}_{2,3} \cdot \left(\vec{\bm I}^n_{i,j}
-\frac{\Delta x}{2}\delta_x\vec{\bm I}^n_{i,j}\right) \right\}\\\nonumber
& &
+ \tilde{d}^{n+1}_{i-1/2,j}{\bm\langle}\xi^2\breve{\vec{\bm \psi}} {\bm\rangle}
\left(\delta_x (\kappa^{n+1}\phi^n)_{i-1/2,j} +(1-\kappa_{i-1/2,j}^{n+1})\delta_x\rho_{i-1/2,j}^{n+1} \right).
\end{eqnarray}
Similar expressions can be taken for $\mathring{\Upsilon}^{n+1}_{i,j-1/2}$, ${\mathring{\vec{\bf H}}}_{i,j-1/2}$ in the $y$-direction.

In view of the simplified fluxes, it is easy to find that they are in fact a direct combination of the simplest second-order
upwind fluxes for the optically thin regime and the second-order fluxes for the diffusion limit. And UGKS just provides the combination
coefficients, i.e, multi-scale coefficients that make the scheme possess the multi-scale property. We also remind that these fluxes
can not be simplified any more in the intermediate regime. And in this case,
it is better to keep all the terms in the fluxes \eqref{flux Phi}-\eqref{flux G} and \eqref{flux Upsilon}-\eqref{flux H}, i.e.,
 the original $PPFP_N$-based UGKS is preferred.

In the regime $\epsilon\ll 1$ and the regime $\epsilon=O(1)$,
replacing all the fluxes in the $PPFP_N$-based UGKS scheme
with the above simplified fluxes, which have no effect on the positive preserving  and asymptotic preserving properties, will result in the desired simplified $PPFP_N$-based UGKS, i.e., the $PPFP_N$-based SUGKS.

\section{Numerical experiments}
\label{section: Numerical experiments}
In this section we test the performance of the proposed second order AP and PP schemes, i.e.,
the $PPFP_N$-based $UGKS$ and $PPFP_N$-based $SUGKS$, which we abbreviate as $PPFP_N$ and $PPFP_N$-$S$ in the following.

As for the time step $\Delta t$, it is taken to satisfy the positive-preserving constraint conditions
\eqref{time step constraints}, \eqref{time step constraint of limiter section} and the CFL stability condition.
Here the CFL condition is determined by using an argument similar to that used in \cite{L. Mieussens-linear kinetic-2013}, and
as a result, the following CFL condition will be used in our numerical experiments:
\begin{equation}\nonumber
\Delta t \leq CFL \cdot \min\{\Delta x, \Delta y\},
\end{equation}
where $CFL<1$ is some problem-dependent constant.
\vspace{3mm}

First in Examples 1 and 2, we consider the following linear radiation transfer equation, which is a simplification of the system
\eqref{Gray Radiative transfer equaitons} in the case that the material temperature and the radiation temperature are the same
\begin{equation}\label{linear kinetic equation}
\frac{\epsilon^2}{c}\partial_t I + \epsilon  {\bm \Omega}\cdot \nabla I=\sigma(\frac{1}{4\pi}\rho-I).
\end{equation}

 {\bf \noindent {\sc Example 1} (The line source problem).}
This problem serves as a benchmark test in studying various angular approaches \cite{Line-source-benchmark-2013} in dealing with the
ray effects. We present the problem specifications in Table \ref{Problem specification of the line source}.
The numerical simulation is run on $150\times 150$ spatial meshes with time step $\Delta t = 0.1\cdot\min\{\Delta x, \Delta y\}$,
which satisfies the positive preserving constraint conditions and the CFL condition.
The final simulation time is $t=1$. The spherical-spline filter function $f_{\text{SSpline}}(\lambda)=\frac{1}{1+\lambda^4}$
and $\sigma_f=45$ are applied. Figures \ref{Figure: contour of line source} and \ref{Figure: rho lineouts of line source}
show the contours of the solution $\rho$ and the line-outs along the nonnegative $x$-axis and $y=x$ at time $t=1$.
Moreover, the exact solution in \cite{Line-source-exact-solution-1999},
as well as the $S_{16}$ and $P_{11}$ solutions, and in particular, the solution of the first-order $PPFP_{11}$ scheme
(abbreviate as $PPFP_{11}^{1-\mathrm{order}}$ in the following) constructed in our previous work \cite{Xu-Jiang-Sun-2021}
with $300\times 300$ spatial meshes are also plotted in Figures \ref{Figure: contour of line source}
and \ref{Figure: rho lineouts of line source} for comparison.
 \begin{table}[h]
\centering
 \caption{\small Problem specifications of {\sc Example 1}. }
\begin{tabular}{ll}
 \hline
 Spatial domain: & \qquad   $[-1.5,1.5] \times [-1.5,1.5]$  \\
 Parameters: & \qquad  $\sigma=1,  c=1, \epsilon=1$\\
 Boundary conditions:& \qquad  all are vacuum  \\
 Initial condition:& $ \qquad
I(x,y,{\bm{\Omega}},0) \approx \frac{1}{4\pi}~\left( \frac{1}{2\pi \varsigma^2} ~e^{-\frac{x^2+y^2}{2 \varsigma^2}}\right)$\\
 & \qquad  with  $\varsigma=0.03$
\\ \hline
 \end{tabular}
 \label{Problem specification of the line source}
 \end{table}

It can be seen from Figures \ref{Figure: contour of line source}-\ref{Figure: rho lineouts of line source} that the $S_{16}$ solution
and $P_{11}$ solution perform poorly due to ray effects and oscillations, respectively. Furthermore, negative $\rho$ can be
clearly observed in the $P_{11}$ approximation. On the other hand, the first-order $PPFP_{11}$ solution,
the current second-order $PPFP_{11}$ solution and $PPFP_{11}$-$S$ solution not only have higher accuracy, but also can suppress
the oscillations of the $P_{11}$ solution and maintain the positivity of the solution.

In Table \ref{table of line source}, we give the relative $\ell_2$ spatial errors of $\rho$ at $t=1$
(denoted by $re(\ell_{2}^{\rho})$) and the computation time of these schemes. The numerical tests are carried out on a Lenovo personal
computer with Intel(R) Core(TM) i7-8700 CPU. It can be seen from Table \ref{table of line source} that the current second-order schemes
indeed perform better in accuracy than $S_{16}$ and $P_{11}$. Furthermore, the current schemes are computationally efficient as well.
In particular, by comparing the $PPFP_{11}^{1-\mathrm{order}}$ scheme with the $PPFP_{11}$-$S$ scheme, we find that the $PPFP_{11}$-$S$ scheme
achieves a more accurate solution with less CPU time, which validates the simplified scheme, at least for this example.
In addition, comparing the computation time of the $PPFP_{11}$ scheme with that of the $P_{11}$ scheme, we can conclude that the additional filtering term and positive-preserving limiters do not add too much computation time.

\begin{table}[h]
\centering
\begin{tabular}{cccccc}
\hline
  Schemes & $S_{16}$ &  $P_{11}$ 
  &  $PPFP_{11}^{1-\mathrm{order}}$& $PPFP_{11}$ & $PPFP_{11}$-$S$ \\ \hline
   $re(\ell_{2}^{\rho}) $
   &  5.31E-1 &   8.45E-1& 
   9.29E-2 & 6.78E-2 & 5.79E-2  \\ \hline
    CPU time (sec) &  210 &    868  
    & 247  & 882 & 220  \\ \hline
 \end{tabular}
 \caption{\small  {\sc Example 1.} 
 Relative error $re(\ell_2^{\rho})$
  and computation time by the various methods.}
 \label{table of line source}
 \end{table}

\begin{figure}[htbp]
{
\begin{minipage}[h]{0.45\textwidth}
\centering
\centerline{\includegraphics[width=2.2in]{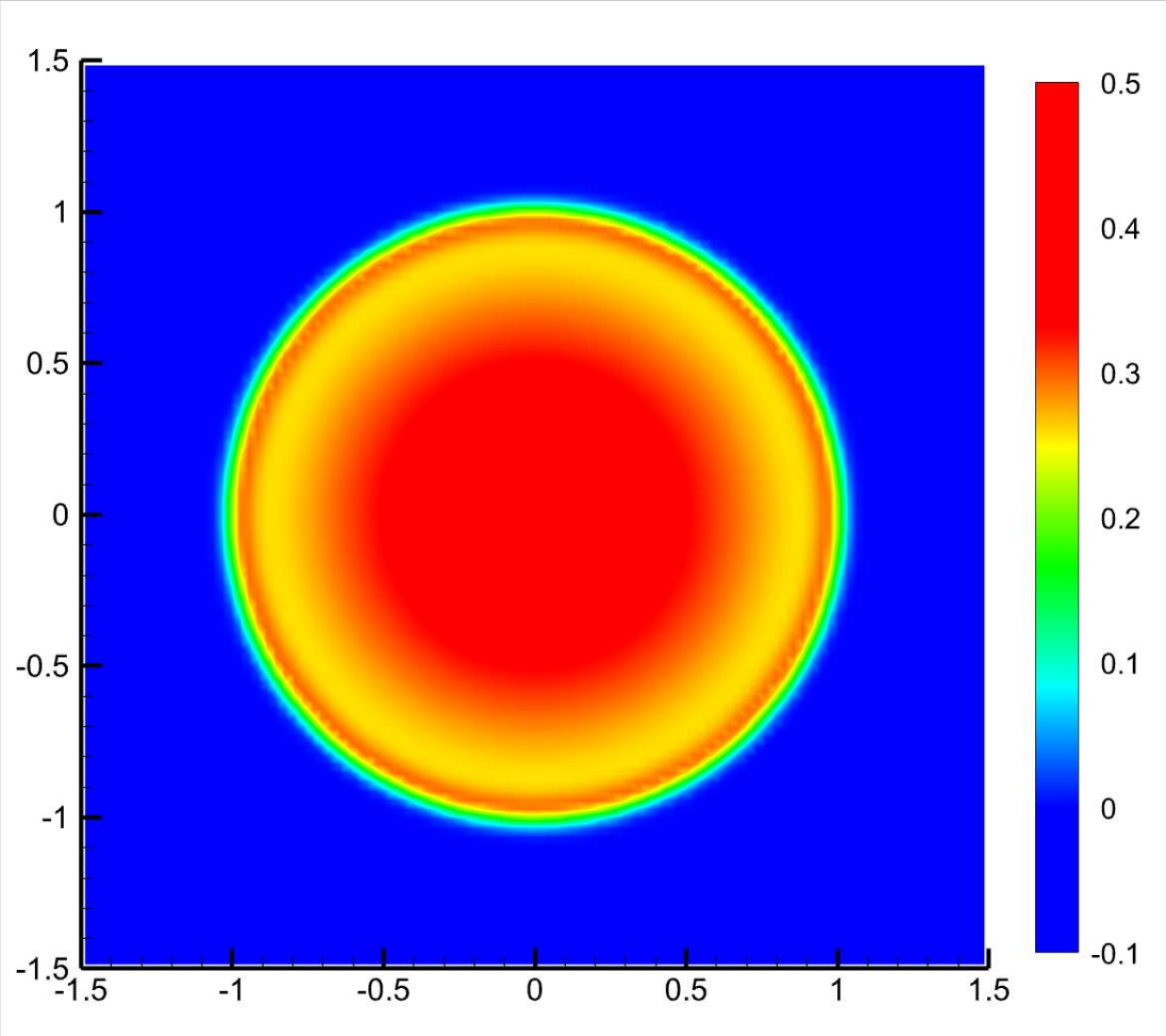}}
\centerline{\small (a) Exact solution}
\end{minipage}
\begin{minipage}[h]{0.45\textwidth}
\centering
\centerline{\includegraphics[width=2.2in]{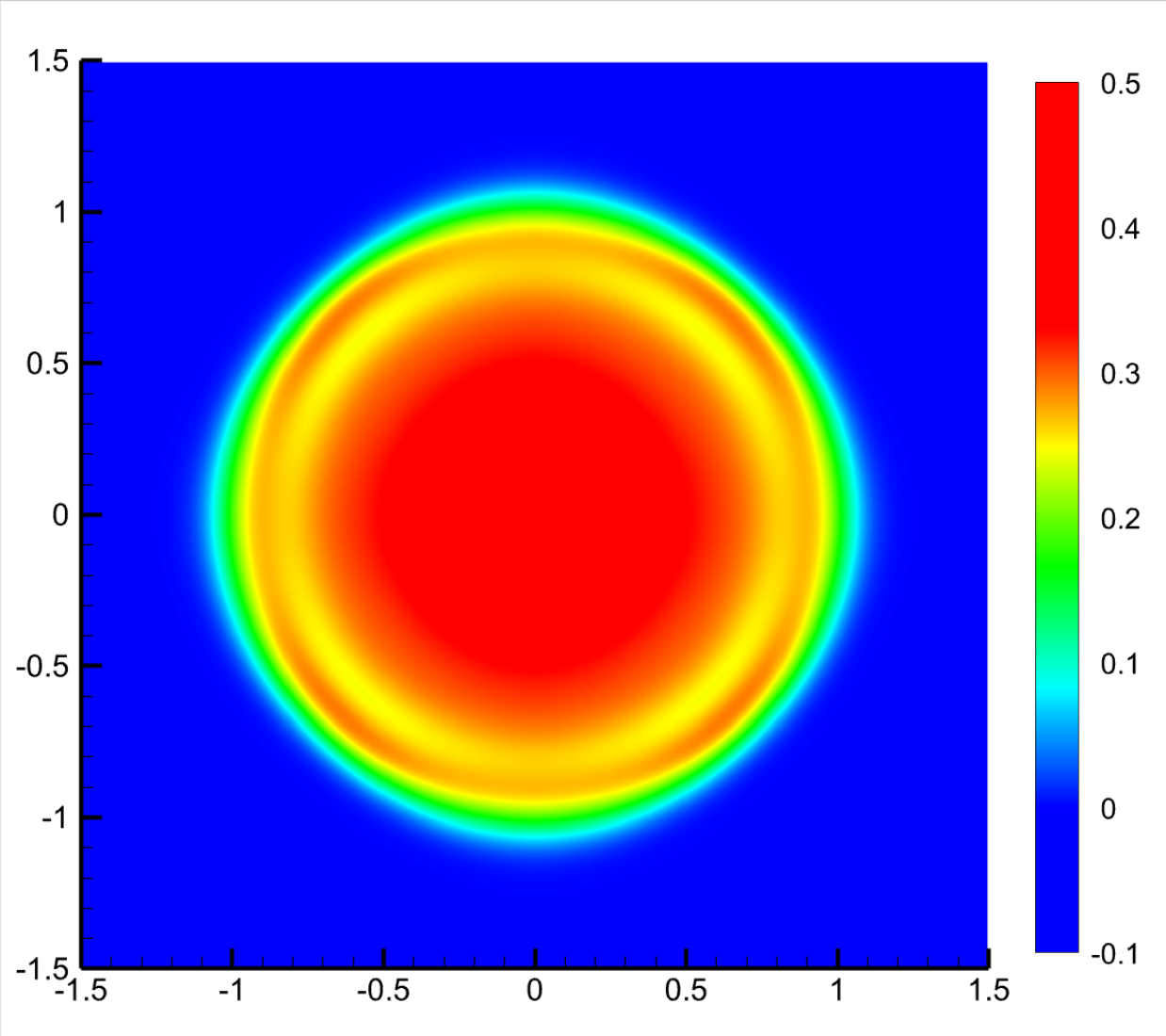}}%
\centerline{\small (b) $PPFP_{11}^{1-\mathrm{order}}$ solution}
\end{minipage}%
}\vskip -1.6pt
{
\begin{minipage}[h]{0.45\textwidth}
\centering
\centerline{\includegraphics[width=2.2in]{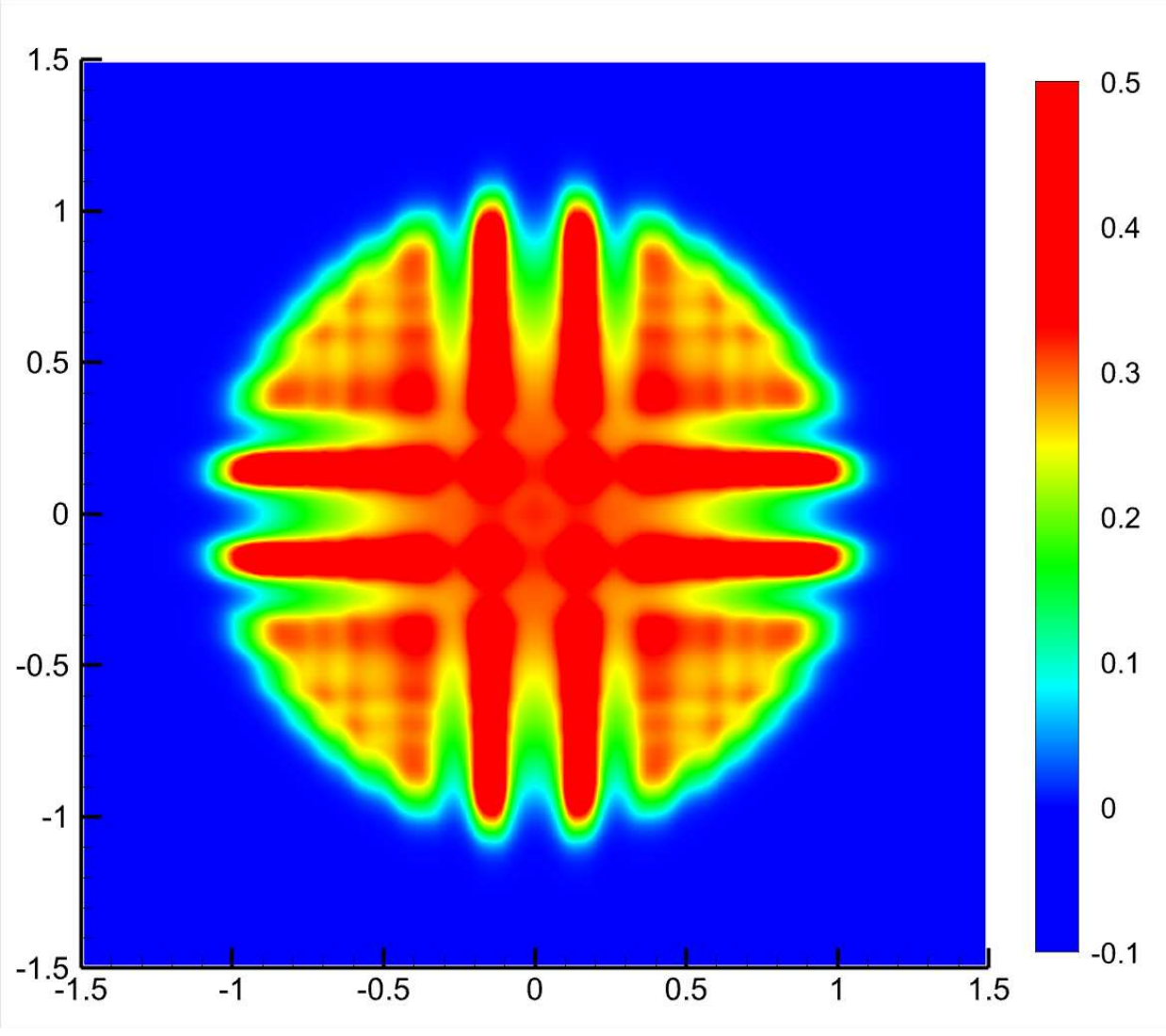}}
\centerline{\small (c) $S_{16}$ solution}
\end{minipage}
\begin{minipage}[h]{0.45\textwidth}
\centering
\centerline{\includegraphics[width=2.2in]{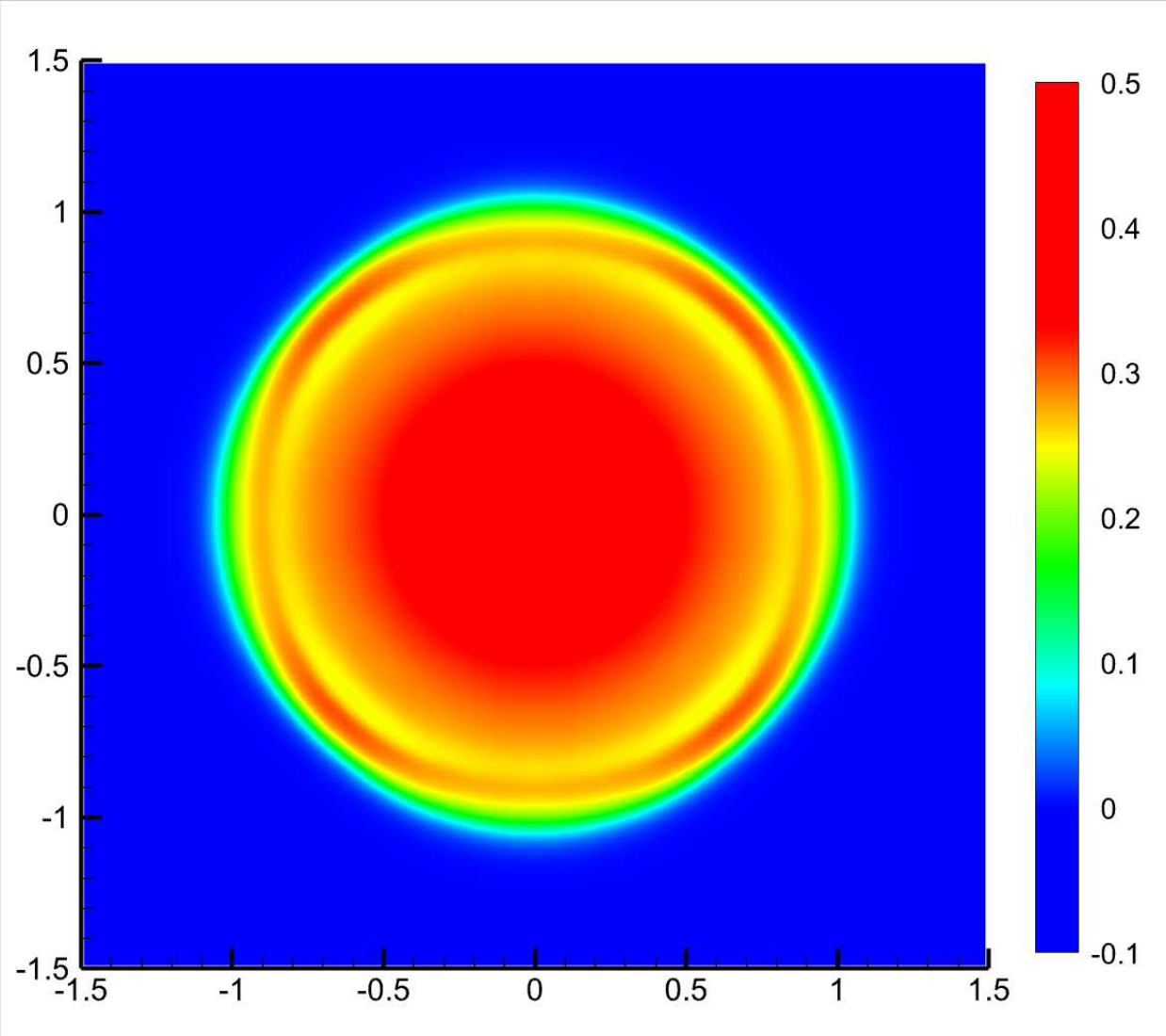}}
\centerline{\small (d) $PPFP_{11}$ solution}
\end{minipage}
}\vskip -1.6pt
{
\begin{minipage}[h]{0.45\textwidth}
\centering
\centerline{\includegraphics[width=2.2in]{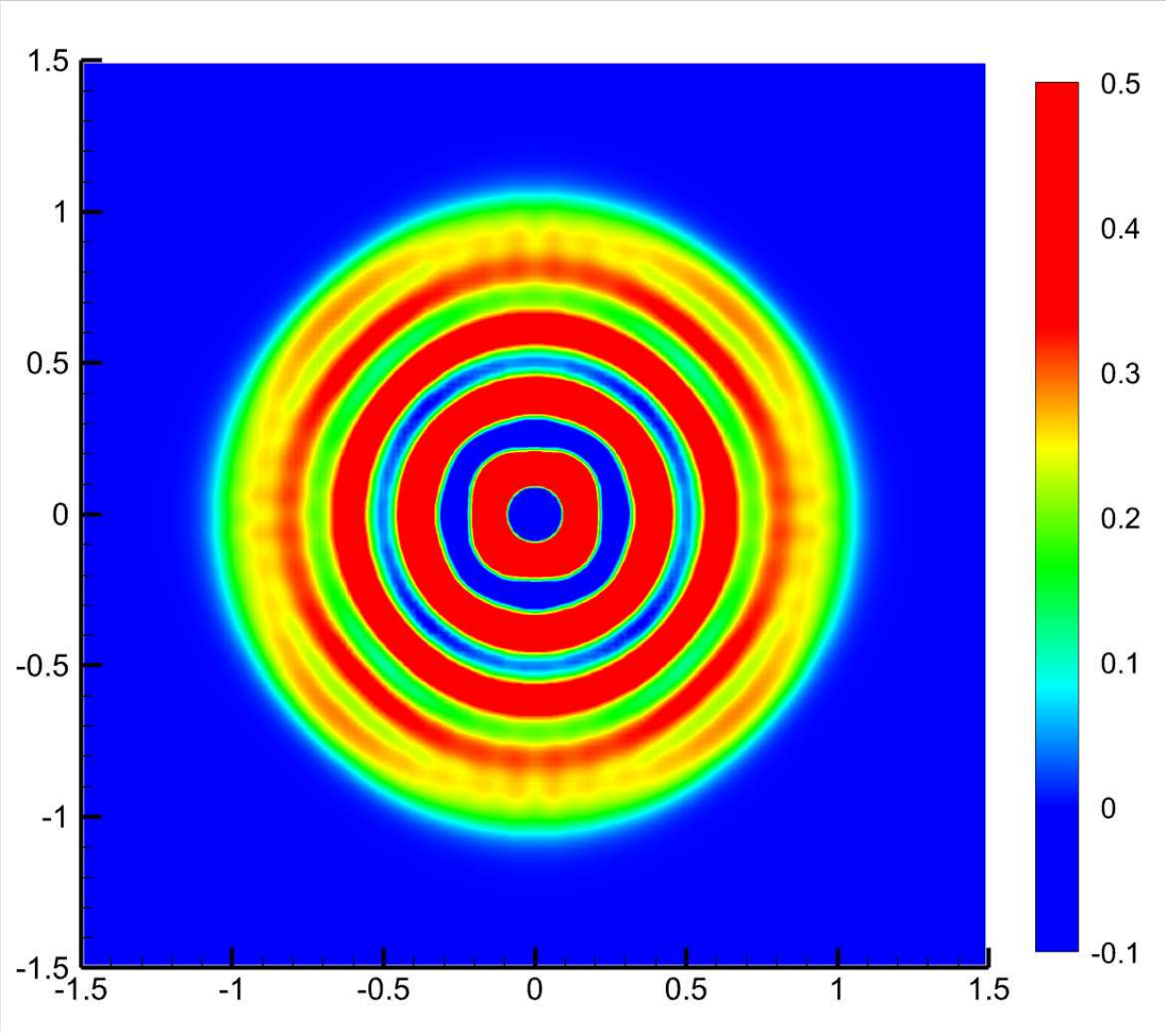}}
\centerline{\small (e) $P_{11}$ solution }
\end{minipage}
\begin{minipage}[h]{0.45\textwidth}
\centering
\centerline{\includegraphics[width=2.2in]{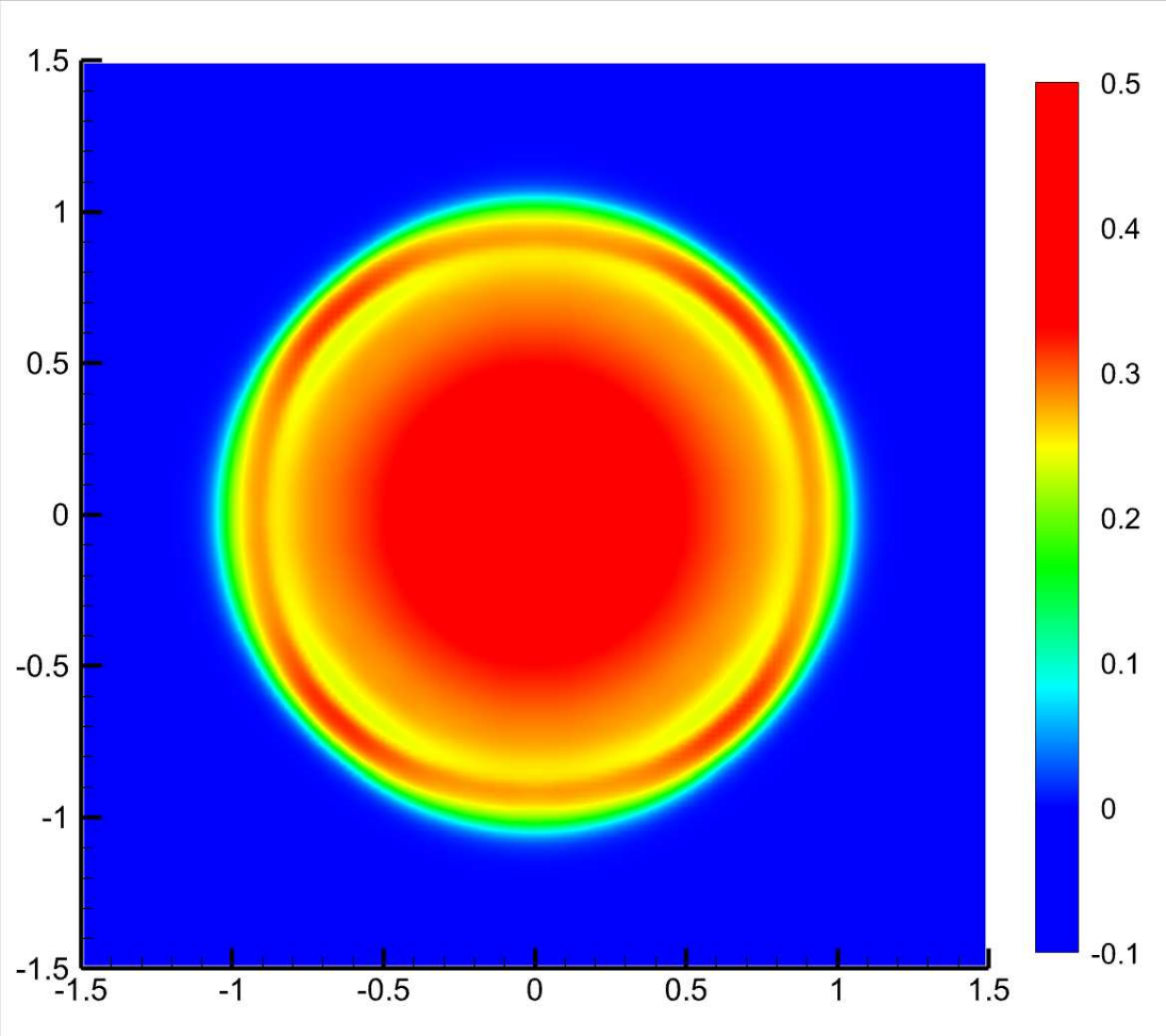}}
\centerline{\small (f) $PPFP_{11}$-$S$ solution}
\end{minipage}
}
\vskip -8pt
\caption{\small {\sc Example 1}. The solution $\rho$ of the line source problem by the various methods.
}
\label{Figure: contour of line source}
\end{figure}

\begin{figure}[htbp]
{
\begin{minipage}[h]{0.45\textwidth}
\centering
\centerline{\includegraphics[width=2.5in]{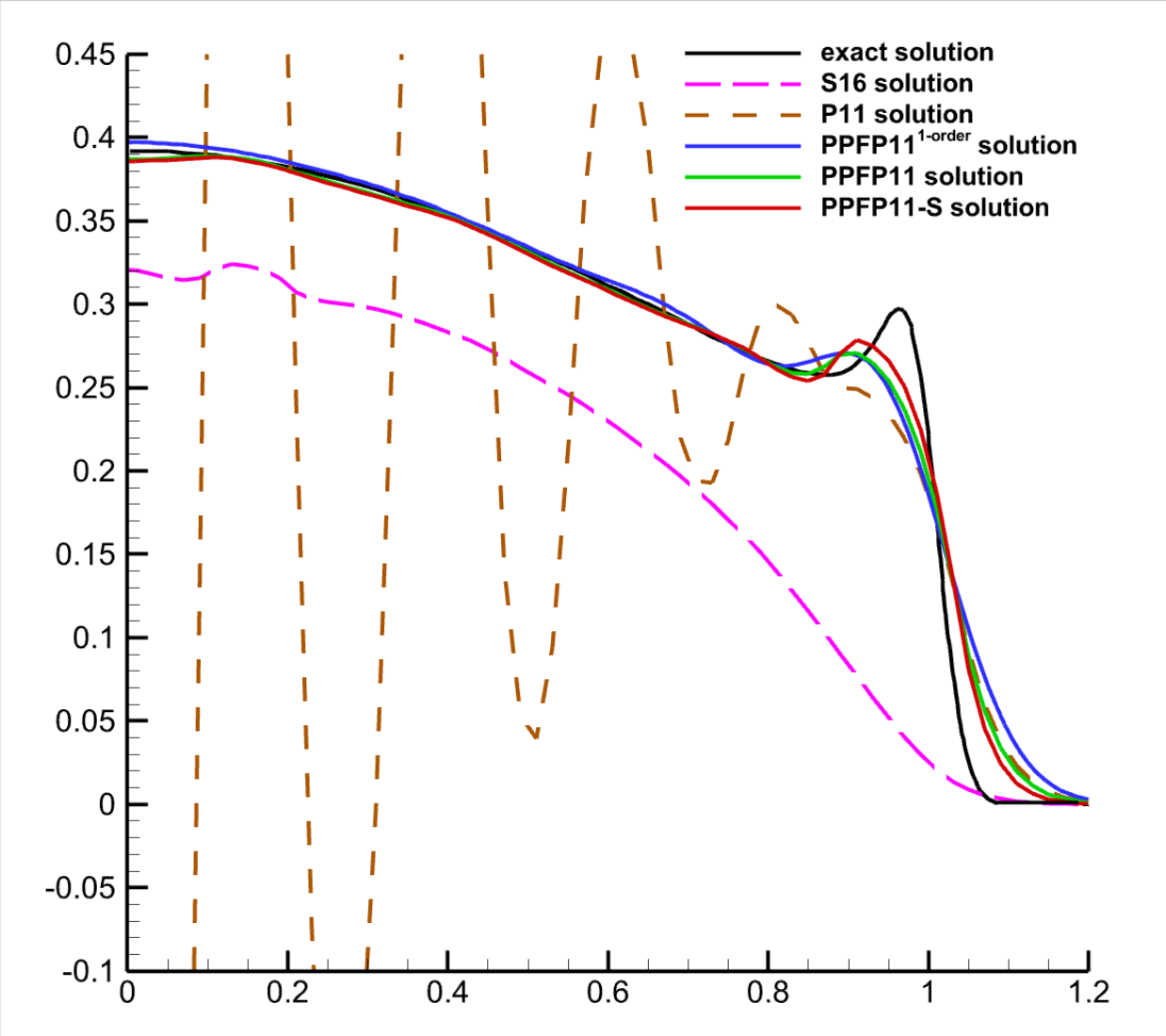}}
 \centerline{\small (a) along $x$ axis}
\end{minipage}
\begin{minipage}[h]{0.45\textwidth}
\centering
\centerline{\includegraphics[width=2.5in]{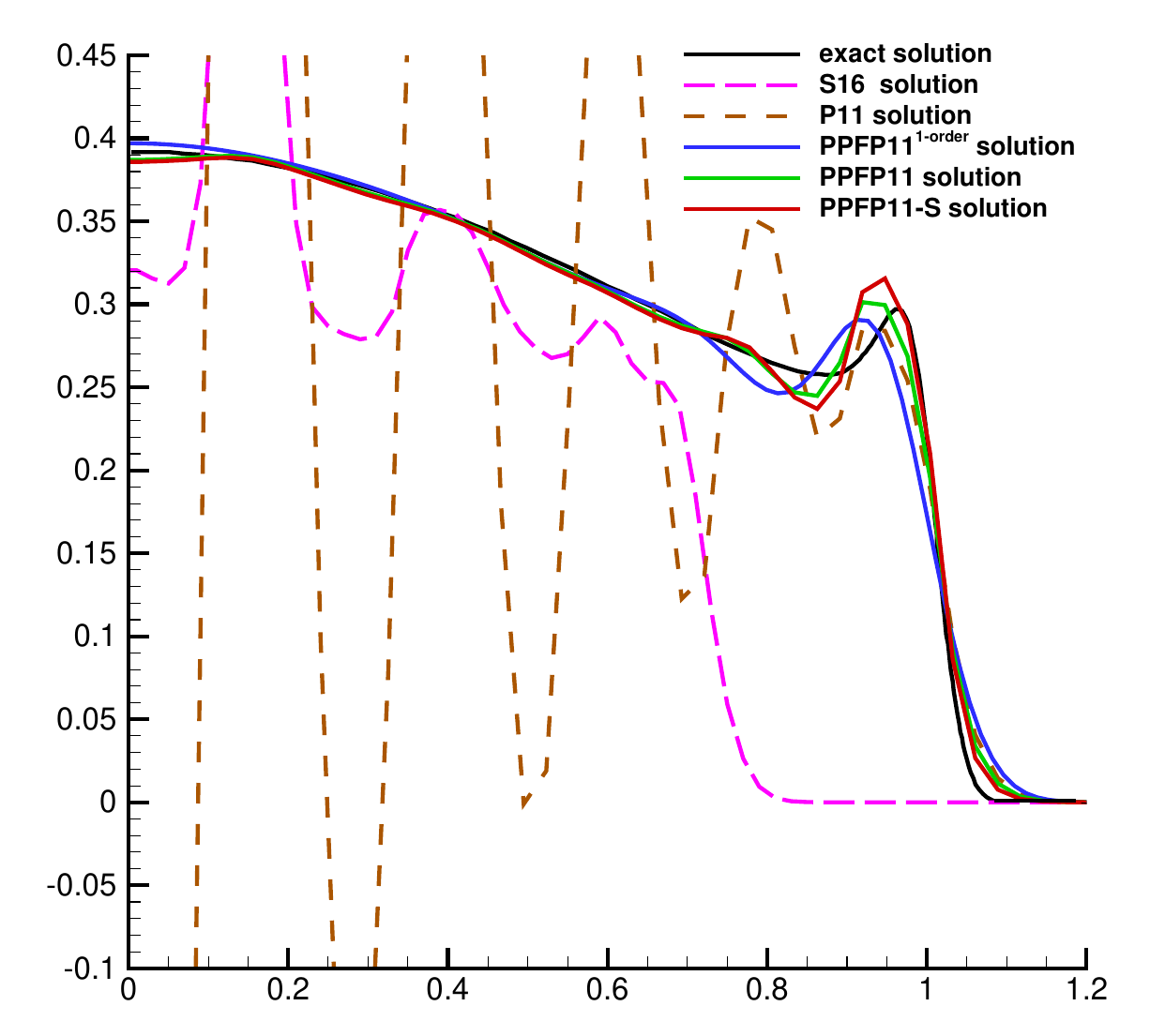}}%
\centerline{\small (b) along $y=x$}
\end{minipage}%
}
 \vskip -6pt
\caption{\small {\sc Example 1.}
The line-outs of $\rho$ 
by various methods.
}
\label{Figure: rho lineouts of line source}
\end{figure}
\vspace{0.1mm}
{\bf \noindent {\sc Example 2} (AP test).}
This example is used to test the asymptotic preserving property of the proposed schemes in 1D. The problem specifications of this example
are given in Table \ref{Problem specification of AP test}.
In this simulation, we take $200$ spatial meshes and time step $\Delta t = 0.1 \cdot \Delta x$.
Since the solution for the given coefficient $\epsilon$ is generally
not oscillatory, $PPFP_3$ and $PPFP_3$-$S$ with $\sigma_f=0$ are used to discretize the angular variable.
 \begin{table}[h]
\centering
 \caption{\small  Problem specifications of {\sc Example 2}. }
\begin{tabular}{ll}
 \hline
 Spatial domain: & \qquad   $[0,1]$  \\
 Parameters: & \qquad  $\sigma=1, c=1, \epsilon=10^{-8}$  \\
 Boundary conditions:& \qquad  $I(0,\mu,t)=1$ for $\mu>0$ \\
                     & \qquad  $I(1,\mu,t)=0$ for $\mu<0$ \\
 Initial condition:&  \qquad $I(x,\mu,0)=0$
\\\hline
 \end{tabular}
 \label{Problem specification of AP test}
 \end{table}

Figure \ref{fig of linear kinetic 1D} shows the computed results of $\overline{\rho}$ with $\overline{\rho} = \rho/(4\pi)$
at $t=0.01,0.05,0.15,2.0$.
From Figure \ref{fig of linear kinetic 1D}, we see that both $PPFP_3$ and $PPFP_3$-$S$ solutions agree well with the reference solution. This
 implies that in the optically thick regime, both $PPFP_N$ and $PPFP_N$-$S$ schemes can recover the diffusion
solution accurately.
Hence, this shows
the $AP$ property of the current schemes.

\begin{figure}[htbp]
\centering
\includegraphics[width=3.30in]{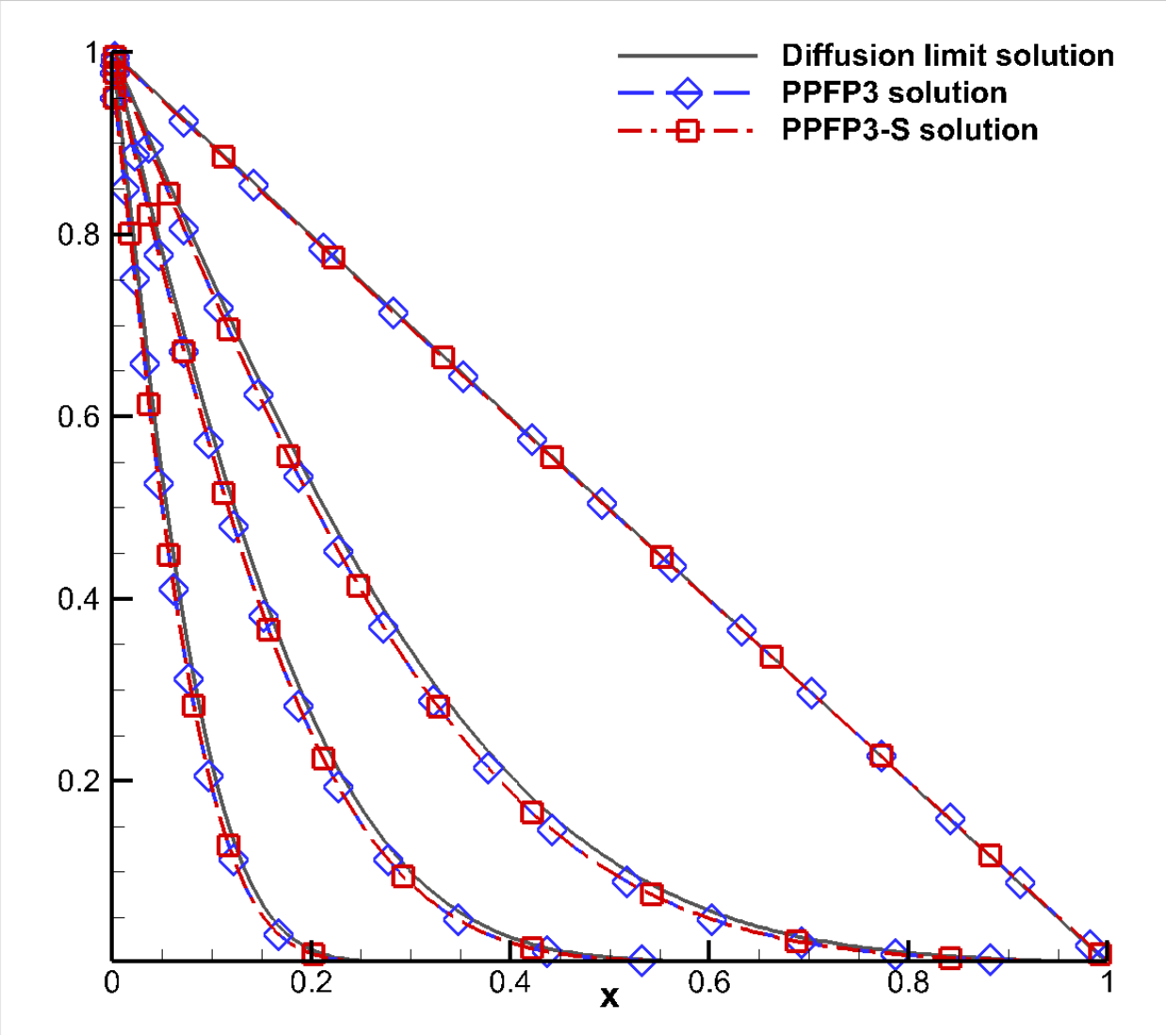}
  \caption{\small
  {\sc Example 2.}
 Linear kinetic transfer solution $\overline{\rho}$ at times 0.01, 0.05, 0.15, 2.0 (from left to right).
 }
\label{fig of linear kinetic 1D}
\end{figure}
\vspace{3mm}

The following Example 3 is given in the non-dimensional form, and
 is designed to test the accuracy of the proposed schemes to the nonlinear gray radiative transfer problem.
\vspace{3mm}

{\bf \noindent {\sc Example 3} (Accuracy test).}
This example aims at testing the spatial second-order accuracy of the proposed schemes in the regime $\epsilon\ll 1 $ and the regime $\epsilon=O(1)$.
Table \ref{Problem specification of Accurate test} shows the problem specifications of this example.
And the final simulation time of this example is taken to be $t=0.2$.
 \begin{table}[h]
\centering
 \caption{\small  Problem specifications of {\sc Example 3}. }
\begin{tabular}{ll}
 \hline
 Spatial domain: & \qquad   $[0,1]$  \\
 Parameters: & \qquad $\sigma=1, a=1, c=1, C_{\nu}=1$  \\
             & \qquad $\epsilon=1, 10^{-1}, 10^{-6}, 10^{-9}$ respectively \\
 Boundary conditions:& \qquad  periodic boundary   \\
 Initial condition:&  \qquad $T(x,0)=1+\frac{1}{2}sin(2\pi x)$ \\
  &\qquad   $I(x,\Omega,0)= acT(x,0)^4$
\\\hline
 \end{tabular}
 \label{Problem specification of Accurate test}
 \end{table}

$PPFP_7$ and $PPFP_7$-$S$ with $\sigma_f=0$ are applied to discretize the angular variable for all given values of $\epsilon$ in this simulation. Remind that by setting $N=7$ one
is already able to resolve the angular variable accurately enough, thus $\sigma_f=0$ is taken.
And also in this setting, the errors of the schemes mainly come from the spatial and temporal discretization.
Uniform spatial meshes are used with $N_x$ denoting the number of spatial cells.
In order to test the spatial accuracy, we take the time step to be $\Delta t=(\Delta x)^2$, which also
 satisfies the positive preserving constraint conditions and the CFL condition.
Here the numerical solution on a finer mesh $\Delta x /2$
is used as a reference solution to compute the relative $L_2$ and $L_{\infty}$ errors
of $\rho$ and $T$ on current mesh $\Delta x$.
Tables \ref{Table: accuracy test - rho}-\ref{Table: accuracy test - T} show the computed results.
From these Tables we clearly observe the second-order accuracy for given values of $\epsilon$.
 Thus, we have verify the spatial accuracy of the proposed schemes in the regimes $\epsilon\ll 1$ and $\epsilon=O(1)$.

 In addition to the accuracy test, to test the asymptotic preserving property in this case, we denote the asymptotic preserving error by
 $$
err_{ap}
=  \max_{i,j} \Big\{~ |I_{0,i,j}^0-\frac{\phi_{i,j}}{2\sqrt{\pi}}| + \sum_{ 1\leq\ell\leq N, -\ell \leq m \leq \ell } |I_{\ell,i,j}^m| ~\Big\}.
$$
To see the behavior of the proposed schemes as $\epsilon \rightarrow 0$, we present the time evolution of the error $err_{ap}$
in Figure \ref{Fig:  err_ap}. Note that the results are computed by using the $PPFP_7$ and $PPFP_7$-$S$ schemes
with $\sigma_f=0$, $N_x=100$ and $\Delta t = 0.25\Delta x$. From Figure \ref{Fig:  err_ap} we know that for any given small $\epsilon$,
the error decreases with time first, and then reaches a fixed steady state,
the value of which is within machine error of the double precision operation used in the code.
Moreover, for any one of the two schemes, as $\epsilon$ decreases, the error gets smaller at the beginning.
It should be pointed out that the observed difference between $PPFP_7$ and $PPFP_7$-$S$ at early time
is within the numerical error of the schemes.
Thus, we demonstrate the asymptotic preserving property of  the proposed $PPFP_N$ and $PPFP_N$-$S$ schemes.

\begin{table}[h]
{\footnotesize
\centering
\begin{tabular}{|c|c|c|c|c|c|c|c|c|c|}
\hline \multicolumn{2}{|c|} {} & 
\multicolumn{4}{|c|}
{$PPFP_7$} & \multicolumn{4}{|c|}
{$PPFP_7$-$S$}
   \\ \hline
   $\epsilon$ &  $N_x$ & re($L_2^{\rho}$)  & order  & re($L_{\infty}^{\rho}$)    & order
   & re($L_2^{\rho}$)  & order  &re($L_{\infty}^{\rho}$) & order  \\ \hline
   \multirow{5}{0.6cm}{ ~$1$}
               &   20   & 1.01E-02 & -      & 1.31E-02   & -    & 9.51E-03 & -    & 1.30E-02 &  -    \\
               &   40   & 3.59E-03 & 1.49   & 4.75E-03   & 1.46 & 3.43E-03 & 1.47 & 4.74E-03 & 1.46  \\
               &   80   & 9.73E-04 & 1.88   & 1.42E-03   & 1.74 & 9.34E-04 & 1.88 & 1.41E-03 & 1.75  \\
               &   160  & 2.59E-04 & 1.91   & 4.28E-04   & 1.73 & 2.49E-04 & 1.91 & 4.26E-04 & 1.73  \\
               &  320   & 6.68E-05 & 1.95   & 1.13E-04   & 1.93 & 6.45E-05 & 1.95 & 1.12E-04 & 1.93  \\\hline
   \multirow{5}{0.6cm}{ $10^{-1}$ } 
                &  20 & 4.89E-03 & -    & 8.44E-03 & -    &3.44E-02 &-    & 4.27E-02 & -    \\
                &  40 & 2.97E-03 & 0.72 & 4.36E-03 & 2.02 &7.87E-03 &2.13 & 9.80E-03 &  2.12  \\
                &  80 & 8.53E-04 & 1.80 & 1.28E-03 & 1.99 &1.98E-03 &1.99 & 2.46E-03 &  1.99  \\
                &  160& 2.27E-04 & 1.91 & 3.49E-04 & 1.99 &4.95E-04 &2.00 & 6.15E-04 &  2.00  \\
                & 320 & 5.84E-05 & 1.96 & 9.08E-05 & 2.00 &1.24E-04 &2.00 & 1.53E-04 &  2.00  \\ \hline
   \multirow{5}{0.6cm}{ {$10^{-6}$} }
               &   20   & 3.78E-03 & -      & 6.42E-03   & -    & 3.78E-03 &-     & 6.42E-03 &-    \\
               &   40   & 3.45E-04 & 3.46   & 5.74E-04   & 3.48 & 3.45E-04 & 3.46 & 5.74E-04 & 3.48  \\
               &   80   & 8.62E-05 & 2.00   & 1.43E-04   & 2.00 & 8.62E-05 & 2.00 & 1.43E-04 & 2.00  \\
               &   160   & 2.16E-05 & 2.00  & 3.58E-05   & 2.00 & 2.16E-05 & 2.00 & 3.59E-05 & 2.00  \\
               &  320   & 5.36E-06 & 2.01   & 8.89E-06   & 2.01 & 5.46E-06 & 1.98 & 9.11E-06 & 1.98  \\ \hline
   \multirow{5}{0.6cm}{ $10^{-9}$ } 
                &  20 & 3.78E-03 & -    & 6.42E-03 & -    & 3.78E-03 & -    & 6.42E-03 & -    \\
                &  40 & 3.45E-04 & 3.46 & 5.74E-04 & 3.48 & 3.45E-04 & 3.46 & 5.74E-04 &  3.48  \\
                &  80 & 8.62E-05 & 2.00 & 1.43E-04 & 2.00 & 8.62E-05 & 2.00 & 1.43E-04 &  2.00  \\
                &  160& 2.16E-05 & 2.00 & 3.58E-05 & 2.00 & 2.16E-05 & 2.00 & 3.58E-05 &  2.00  \\
                & 320 & 5.39E-06 & 2.00 & 8.96E-05 & 2.00 & 5.39E-06 & 2.00 & 8.96E-06 &  2.00  \\ \hline
 \end{tabular}
 \caption{\small {\sc Example 3.} Relative errors of $\rho$ at $t=0.2$.}
 \label{Table: accuracy test - rho}
 }
 \end{table}

\begin{table}[h]
{\footnotesize
\centering
\begin{tabular}{|c|c|c|c|c|c|c|c|c|c|}
\hline \multicolumn{2}{|c|} {} &
\multicolumn{4}{|c|}
{$PPFP_7$} & \multicolumn{4}{|c|}
{$PPFP_7$-$S$}
   \\ \hline
   $\epsilon$ &  $N_x$ & re($L_2^{T}$)  & order  & re($L_{\infty}^{T}$)    & order
   & re($L_2^{T}$)  & order  &re($L_{\infty}^{T}$) & order  \\ \hline
   \multirow{5}{0.6cm}{ ~$1$}
               &   20   & 3.32E-03 & -      & 4.34E-03   & -    & 3.18E-03 & -    & 4.20E-03 &  -    \\
               &   40   & 9.69E-04 & 1.78   & 1.23E-03   & 1.82 & 9.27E-04 & 1.78 & 1.20E-03 & 1.81  \\
               &   80   & 2.50E-04 & 1.96   & 3.81E-04   & 1.69 & 2.39E-04 & 1.96 & 3.73E-04 & 1.69  \\
               &   160  & 6.39E-05 & 1.97   & 1.07E-04   & 1.83 & 6.12E-05 & 1.96 & 1.05E-04 & 1.83  \\
               &  320   & 1.62E-05 & 1.98   & 2.87E-05   & 1.90 & 1.55E-05 & 1.98 & 2.80E-05 & 1.90  \\\hline
   \multirow{5}{0.6cm}{ $10^{-1}$ } 
                &  20 & 1.35E-03 & -    & 2.77E-03 & -    &8.97E-03 &-    & 1.37E-02 & -    \\
                &  40 & 7.97E-04 & 0.76 & 1.45E-03 & 0.93 &2.04E-03 &2.14 & 3.08E-03 &  2.16  \\
                &  80 & 2.28E-04 & 1.80 & 4.27E-04 & 1.77 &5.13E-04 &1.99 & 7.72E-04 &  2.00  \\
                &  160& 6.08E-05 & 1.91 & 1.17E-04 & 1.87 &1.28E-04 &2.00 & 1.92E-04 &  2.00  \\
                & 320 & 1.56E-05 & 1.96 & 3.05E-05 & 1.94 &3.21E-05 &2.00 & 4.81E-05 &  2.00  \\ \hline
   \multirow{5}{0.6cm}{ {$10^{-6}$} }
               &   20   & 9.83E-04 & -      & 2.00E-03   & -    & 9.83E-04 &-     & 2.00E-03 &-    \\
               &   40   & 9.81E-05 & 3.33   & 2.46E-04   & 3.02 & 9.81E-05 & 3.33 & 2.46E-04 & 3.02  \\
               &   80   & 2.45E-05 & 2.00   & 6.16E-05   & 2.00 & 2.45E-05 & 2.00 & 6.16E-05 & 2.00  \\
               &   160  & 6.13E-06 & 2.00   & 1.54E-05   & 2.00 & 6.14E-06 & 2.00 & 1.54E-05 & 2.00  \\
               &  320   & 1.52E-06 & 2.01   & 3.82E-06   & 2.01 & 1.56E-06 & 1.98 & 3.92E-06 & 1.98  \\ \hline
   \multirow{5}{0.6cm}{ $10^{-9}$ } 
                &  20 & 9.83E-04 & -    & 2.00E-03 & -    & 9.83E-04 & -    & 2.00E-03 & -    \\
                &  40 & 9.81E-05 & 3.33 & 2.46E-04 & 3.02 & 9.81E-05 & 3.46 & 2.46E-04 &  3.02  \\
                &  80 & 2.45E-05 & 2.00 & 6.16E-05 & 2.00 & 2.45E-05 & 2.00 & 6.16E-05 &  2.00  \\
                &  160& 6.13E-06 & 2.00 & 1.54E-05 & 2.00 & 6.13E-06 & 2.00 & 1.54E-05 &  2.00  \\
                & 320 & 1.53E-06 & 2.00 & 3.85E-06 & 2.00 & 1.53E-06 & 2.00 & 3.85E-06 &  2.00  \\ \hline
 \end{tabular}
 \caption{\small {\sc Example 3.} Relative errors of $T$ at $t=0.2$.}
 \label{Table: accuracy test - T}
 }
 \end{table}

 \begin{figure}[htbp]
\centering
\includegraphics[width=3.50in]{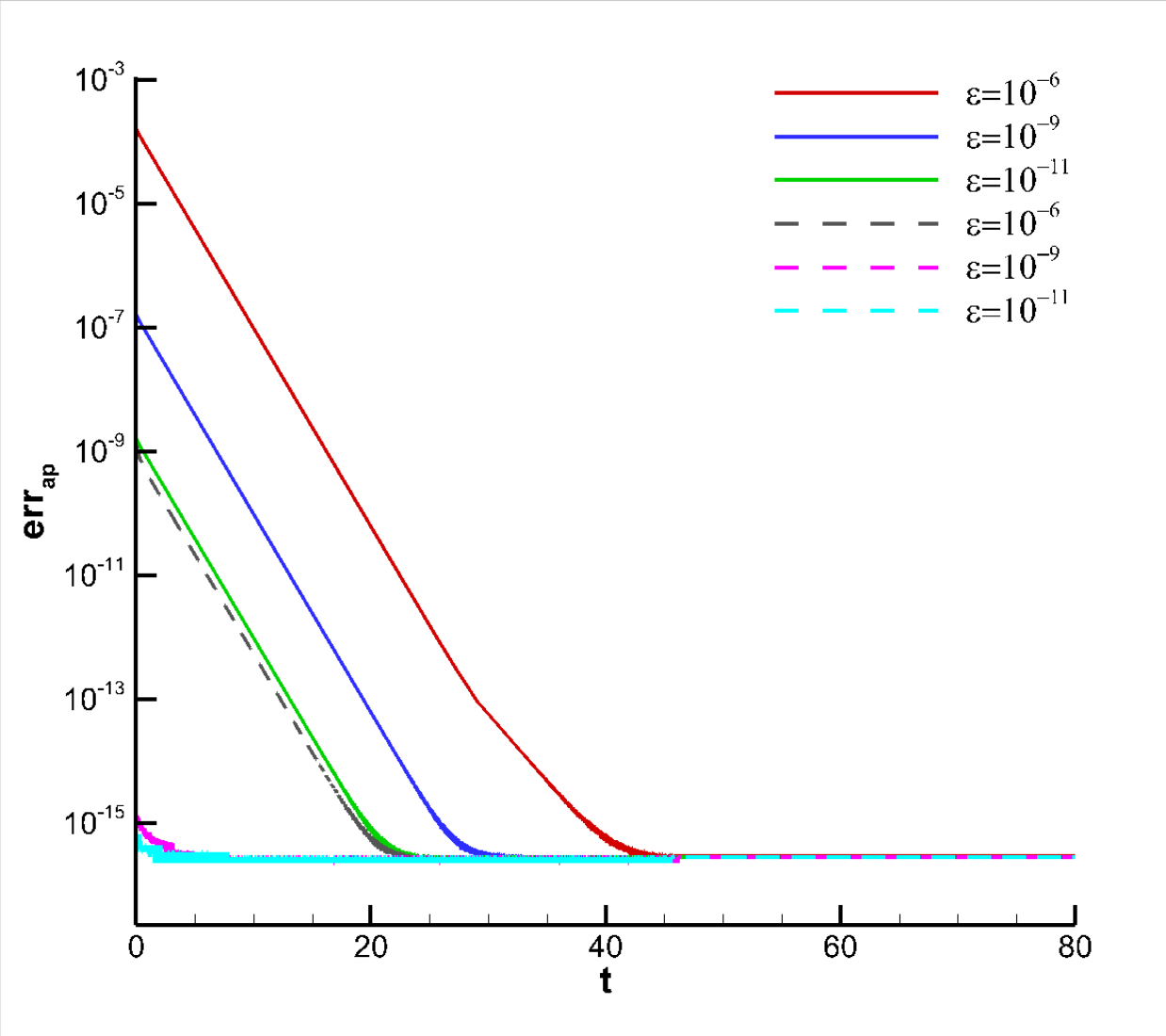}
  \caption{\small
  {\sc Example 3.}
  Time evolution of $err_{ap}$ by the $PPFP_7$ scheme (solid line) and $PPFP_7$-$S$ scheme (dashed line).
 }
\label{Fig:  err_ap}
\end{figure}
\vspace{3mm}

The remaining examples are all for the nonlinear gray radiative transfer equations in the dimensional form,
which are taken from Refs. \cite{N.A. Gentile-IMC-2001, Sun-Jiang-Xu-2015}.
In the following, the light speed $c=29.98$ cm/ns, the radiation constant $a=0.01372$ GJ/(cm$^3*$keV$^4$) and $\epsilon=1$.
\vspace{2mm}

{\bf \noindent {\sc Example 4} (Marshak wave-2B)}.
In this example, we take the temperature-dependent absorption/emission coefficient to be $\sigma=\frac{100}{T^3}$ cm$^2$/g,
the specific heat to be $0.1$ GJ/g/keV, and the density to be $3.0$ g/cm$^3$. The initial material temperature $T$ is set to be $10^{-6}$ keV.
The computational domain is a two-dimensional slab $[0\text{cm}, 1\text{cm}]\times [0\text{cm},  0.01\text{cm}]$.
  A constant isotropic incident radiation intensity with a Planckian distribution at $1$keV is kept on the left boundary.
 In the simulation, we take the spatial meshes to be $200\times 1$.
Since the solution for the given coefficient $\sigma$ is generally not oscillatory,
 $PPFP_3$ and $PPFP_3$-$S$ with $\sigma_f=0$  are used to discretize the angular variable.
As for the time step, we take $\Delta t = 0.7\cdot\Delta x/c$ for the $PPFP_3$ scheme and $\Delta t = 0.05\cdot\Delta x/c$
for the $PPFP_3$-$S$ scheme.


In Figure \ref{Figure: Tr of Marshake 2B}, we compare the radiation temperature $T_r \equiv(\frac{\rho}{a c})^{1/4}$
at times $15$, $30$, $45$, $60$, $74$ ns computed by the current $PPFP_3$ and $PPFP_3$-$S$ schemes
 with those in \cite{Sun-Jiang-Xu-2015} by $S_6$. The  numerical material temperatures $T$ at time $74$ ns are presented
 in Figure \ref{Figure: diffusion limit of Marshake 2B}. In addition, the diffusion solution is also given in
 Figure \ref{Figure: diffusion limit of Marshake 2B} for comparison.
From Figures \ref{Figure: Tr of Marshake 2B}-\ref{Figure: diffusion limit of Marshake 2B},
we clearly see that $T_r$ and $T$ computed by the current schemes show almost no difference with
the $S_6$ solution in \cite{Sun-Jiang-Xu-2015}. Moreover, the material temperatures agree well with the diffusion limit solution at $74$ ns.
This verifies that the proposed scheme can capture the right diffusive limit.

\begin{figure}[htbp]
\centering
\includegraphics[width=3.50in]{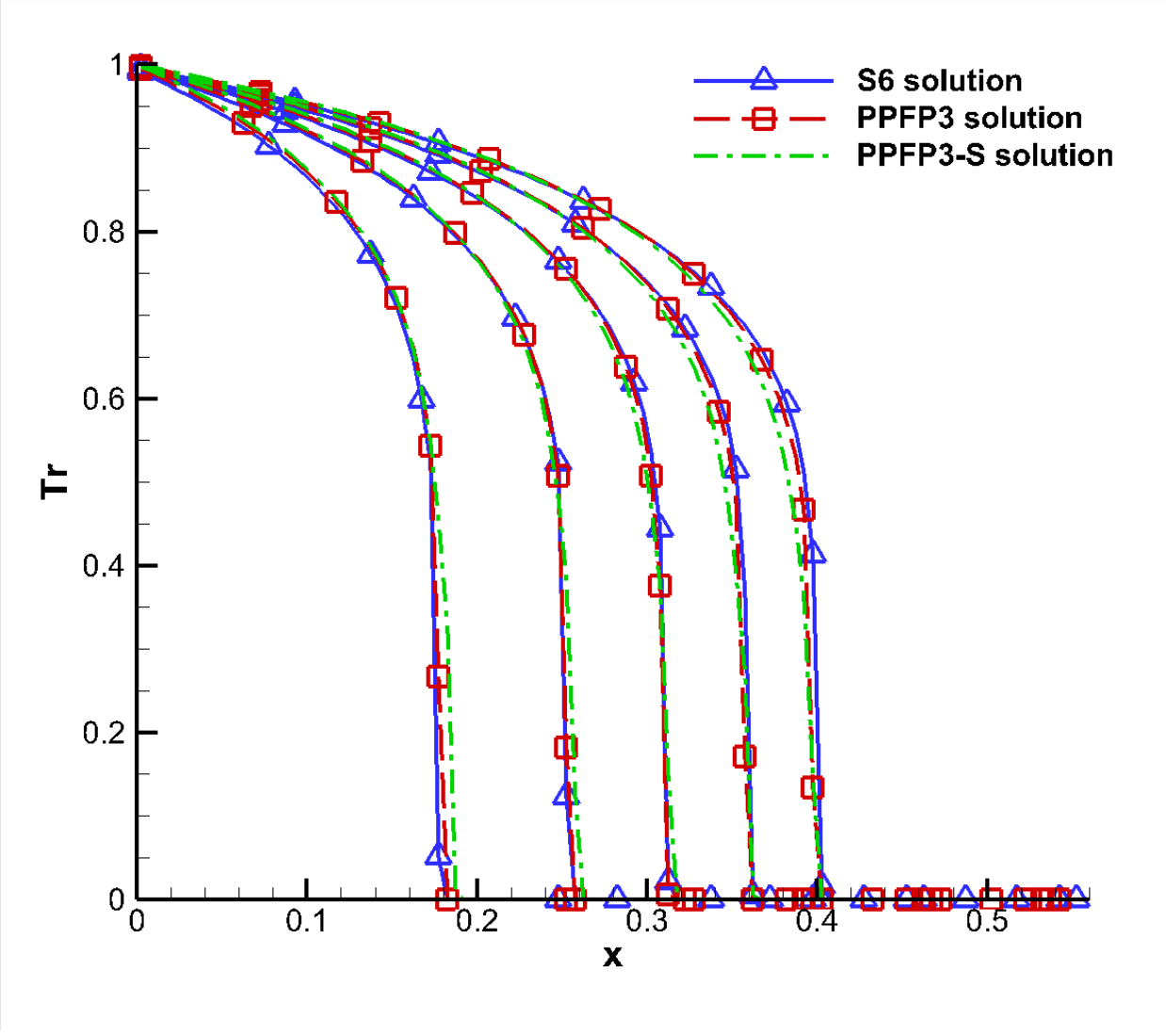}
 \vskip -6pt
\caption{\small {\sc Example 4.}
The radiation temperature $T_r$ at times 15, 30, 45, 60, 74 ns (from  left to right) for the Marshak wave-2B problem.
}
\label{Figure: Tr of Marshake 2B}
\end{figure}

\begin{figure}[htbp]
\centering
\includegraphics[width=3.50in]{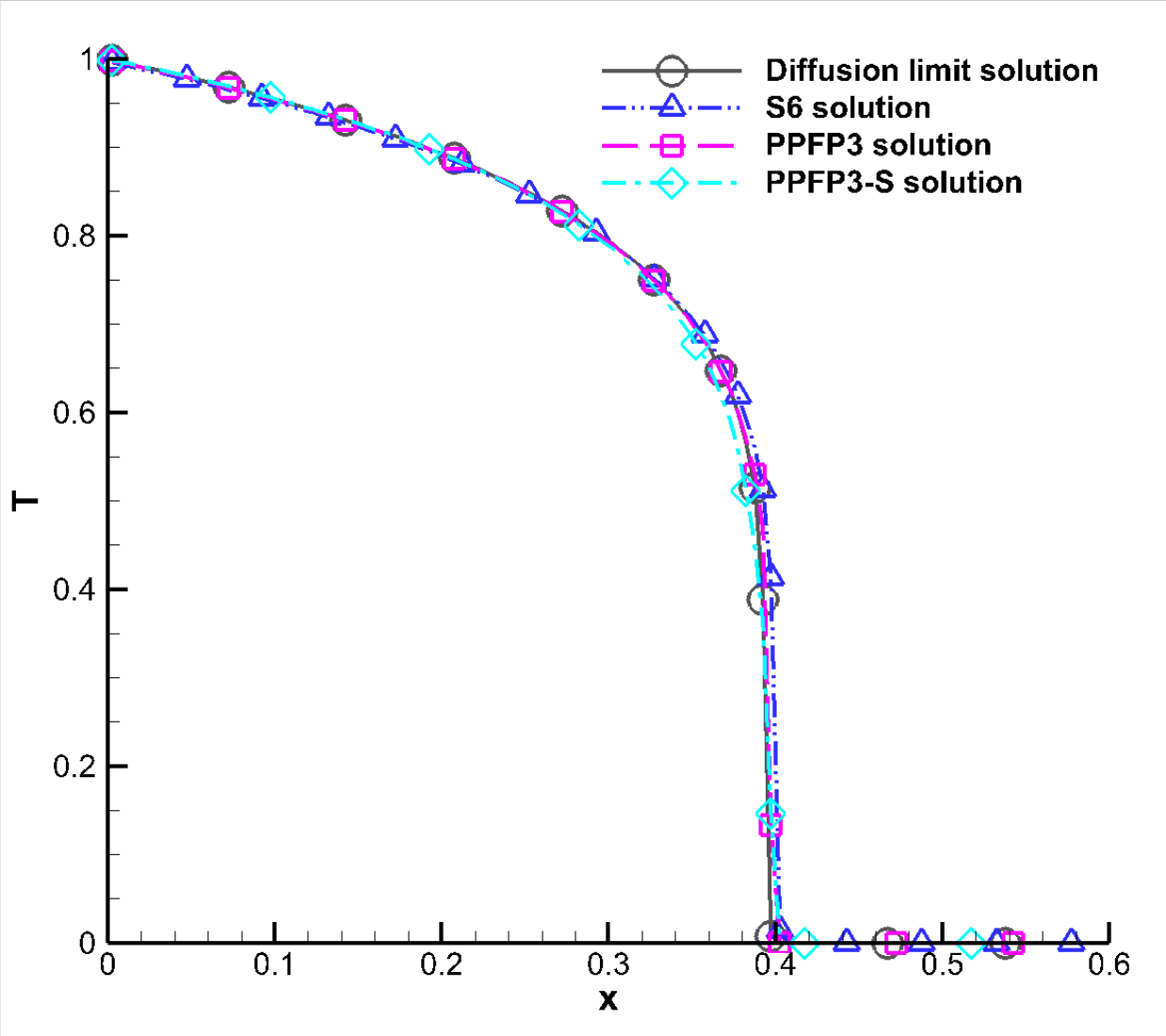}
 \vskip -6pt
\caption{\small {\sc Example 4.}
The numerical material temperature $T$ at time 74 ns for the Marshak wave-2B problem.
}
\label{Figure: diffusion limit of Marshake 2B}
\end{figure}
\vspace{3mm}
{\bf \noindent {\sc Example 5} (Marshak wave-2A).}
The Marshak wave-2A problem is almost the same as the Marshak wave-2B problem except that the coefficient $\sigma=\frac{10}{T^3}$ cm$^2$/g.
And also the same spatial meshes and angular discretization are used in the simulation of this problem. Here $\Delta t = 0.25\cdot\Delta x/c$
is taken.
Figures \ref{Figure: Tr of Marshake 2A} and \ref{Figure: diffusion limit of Marshake 2A} show the numerical results of $T_r$ and $T$.
It can be seen from Figures \ref{Figure: Tr of Marshake 2A}-\ref{Figure: diffusion limit of Marshake 2A} that the $PPFP_3$ solution
and $PPFP_3$-$S$ solution agree well with the $S_6$ solution in \cite{Sun-Jiang-Xu-2015}.
But the computed material temperatures $T$ of the current schemes are
quite different from the diffusion limit solution, which is indeed the case since the small absorption/emission coefficient
in this example violates the equilibrium diffusion approximation.

\begin{figure}[htbp]
\centering
\includegraphics[width=3.50in]{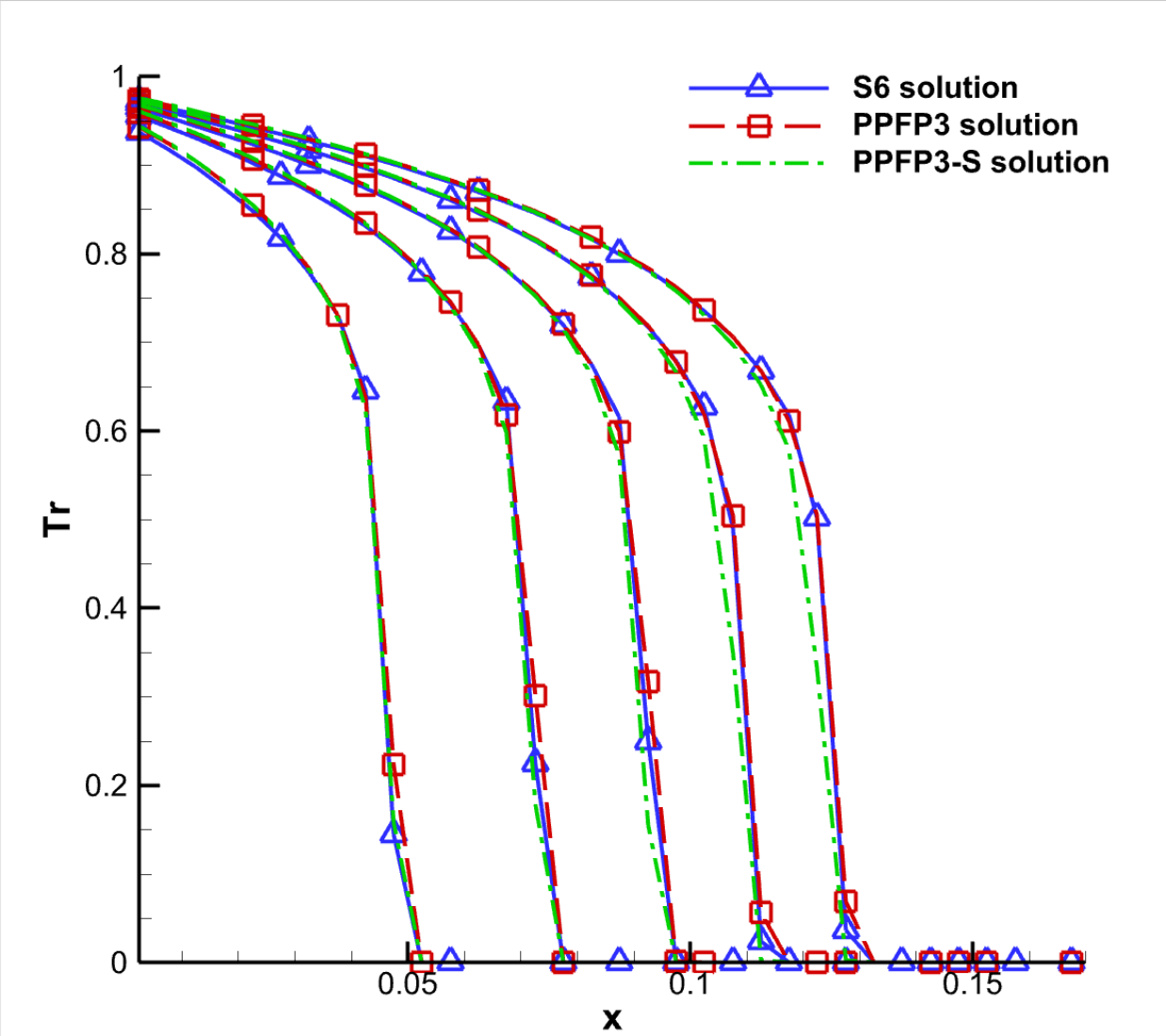}
 \vskip -6pt
\caption{\small {\sc Example 5.}
The radiation temperature $T_r$ at times 0.2, 0.4, 0.6, 0.8, 1.0 ns (from left to right) for the Marshak wave-2A problem.
}
\label{Figure: Tr of Marshake 2A}
\end{figure}

\begin{figure}[htbp]
\centering
\includegraphics[width=3.50in]{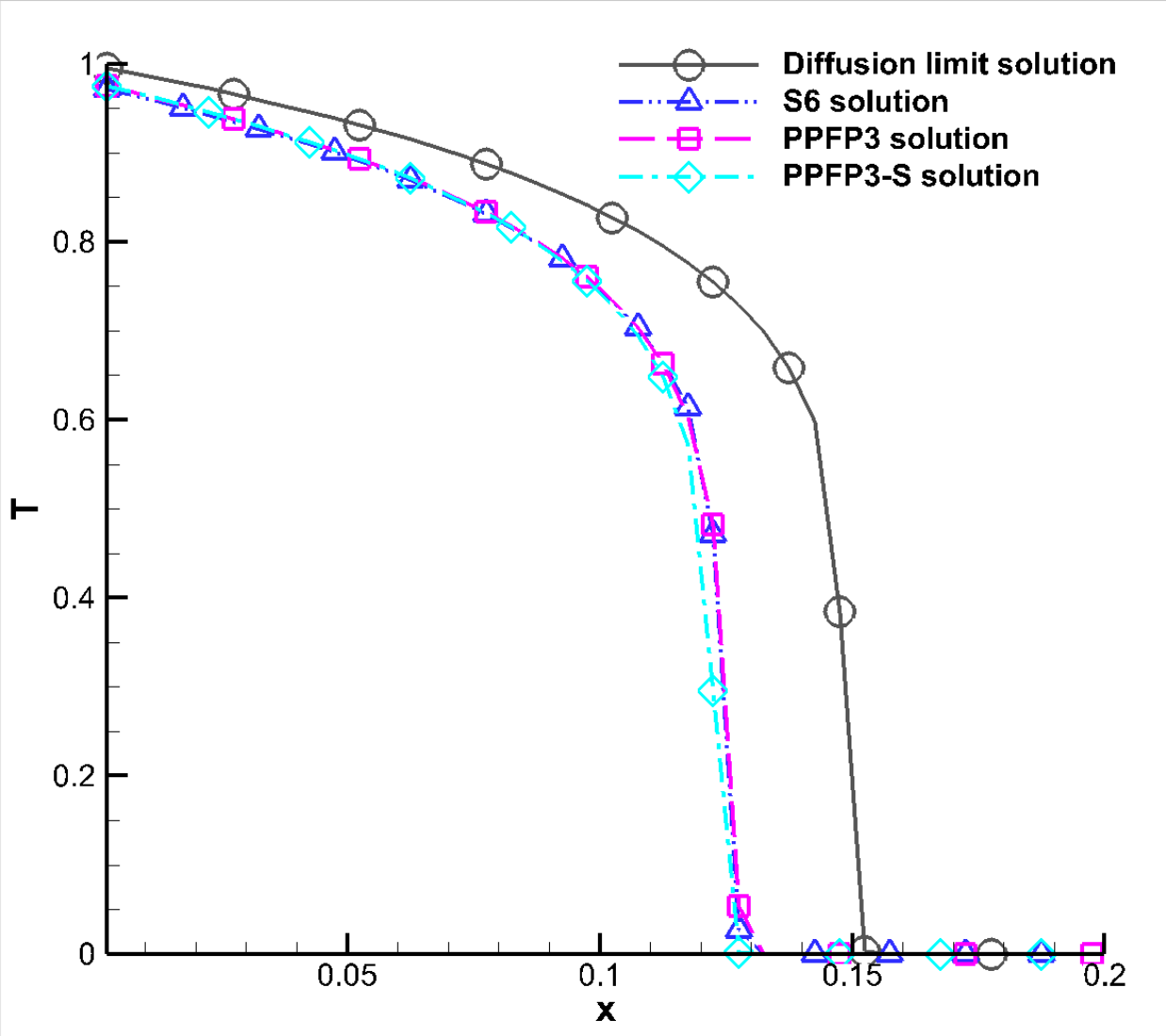}
 \vskip -6pt
\caption{\small {\sc Example 5.} The material temperature $T$ at time 1.0 ns for the Marshak wave-2A problem.
}
\label{Figure: diffusion limit of Marshake 2A}
\end{figure}
\vspace{3mm}

{\bf \noindent {\sc Example 6} (Tophat test).}
The initial configuration of this problem is given in Figure \ref{layout of Tophat test problem}. Initially,
the radiation temperature and material temperature are taken to be $0.05$keV everywhere.
A heating source with a fixed temperature $0.5$ keV is located on the left boundary for $-0.5< y < 0.5$.
We place five probes (A,B,C,D,E) in the thin opacity material, with the positions marked in Figure \ref{layout of Tophat test problem},
to monitor the change of the temperature.

In the simulation, we take $128\times 64$ spatial meshes and let $\Delta t = 0.25\cdot min\{\Delta x, \Delta y\}/c$.
The $PPFP_7$ and $PPFP_7$-$S$ schemes, with Lanczos' filter function $f_{\mathrm{Lanczos}}(\lambda)=\frac{\sin (\lambda)}{\lambda}$
 and the space-dependent filter parameter $\sigma_f$ that is given in the table of Figure \ref{layout of Tophat test problem},
 are applied. In Figure \ref{Figure: T and Tr of five point in Tophat test}, we plot the time evolution of $T$ and $T_r$ at five probe points.
 The $S_{16}$ and $P_7$ solutions are presented in Figure \ref{Figure: T and Tr of five point in Tophat test}
 for reference and comparison.

It can be seen from Figure \ref{Figure: T and Tr of five point in Tophat test} that
 the material temperatures $T$ computed by $PPFP_7$, $PPFP_7$-S and $P_7$ agree well with the reference solution of $S_{16}$
 at all probes. As for the radiative temperatures $T_r$, they coincide at probes A,B,D and E. But for the probe C,
 the $P_7$ solution is oscillating heavily between $0.1$ns and $1$ns, while the $PPFP_7$ and $PPFP_7$-S schemes still give
  very accurate solutions.

In order to verify the positive preserving property of the proposed schemes,
we present the contours of $\rho$ and its minimal negative values in Figure \ref{Figure: contours of rho at t=0.5 and t=1.0 in Tophat test}
and Table \ref{Table: table of min rho in Tophat test}, respectively.
 From Figure \ref{Figure: contours of rho at t=0.5 and t=1.0 in Tophat test} and
 Table \ref{Table: table of min rho in Tophat test}, we can conclude that the $FP_7$ scheme indeed can damp oscillations in the $P_7$ approximation, and make the negative solution less likely.
   In addition, Figure \ref{Figure: contours of rho at t=0.5 and t=1.0 in Tophat test}
and Table \ref{Table: table of min rho in Tophat test} also show that,
with the time evolution, the absolute value of the negative quantity decreases.
  However, a negative solution still appears in the $FP_7$ scheme. On the other hand, the solutions of the current $PPFP_7$ and $PPFP_7$-$S$ schemes
  are always positive. Moreover, the accuracy of both $PPFP_7$ and $PPFP_7$-$S$ solutions can be assured,
  which can be seen in comparison with the reference solution of $P_{29}$.

  Figure \ref{Figure: T at different times in Tophat Test} shows the contours of the computed material temperature $T$
  by the $PPFP_7$ and $PPFP_7$-$S$ schemes, from which we can see that there is almost no difference between them. Moreover,
    comparing them with the numerical results from \cite{Sun-Jiang-Xu-2015},
  one can conclude that the current $PPFP_7$ and $PPFP_7$-$S$ schemes can still capture
  the interface between opacity thick and thin materials well.

\begin{figure}[h]
\centering
\begin{minipage}[h]{0.6\textwidth}
\centering
\includegraphics[width=0.9\textwidth]{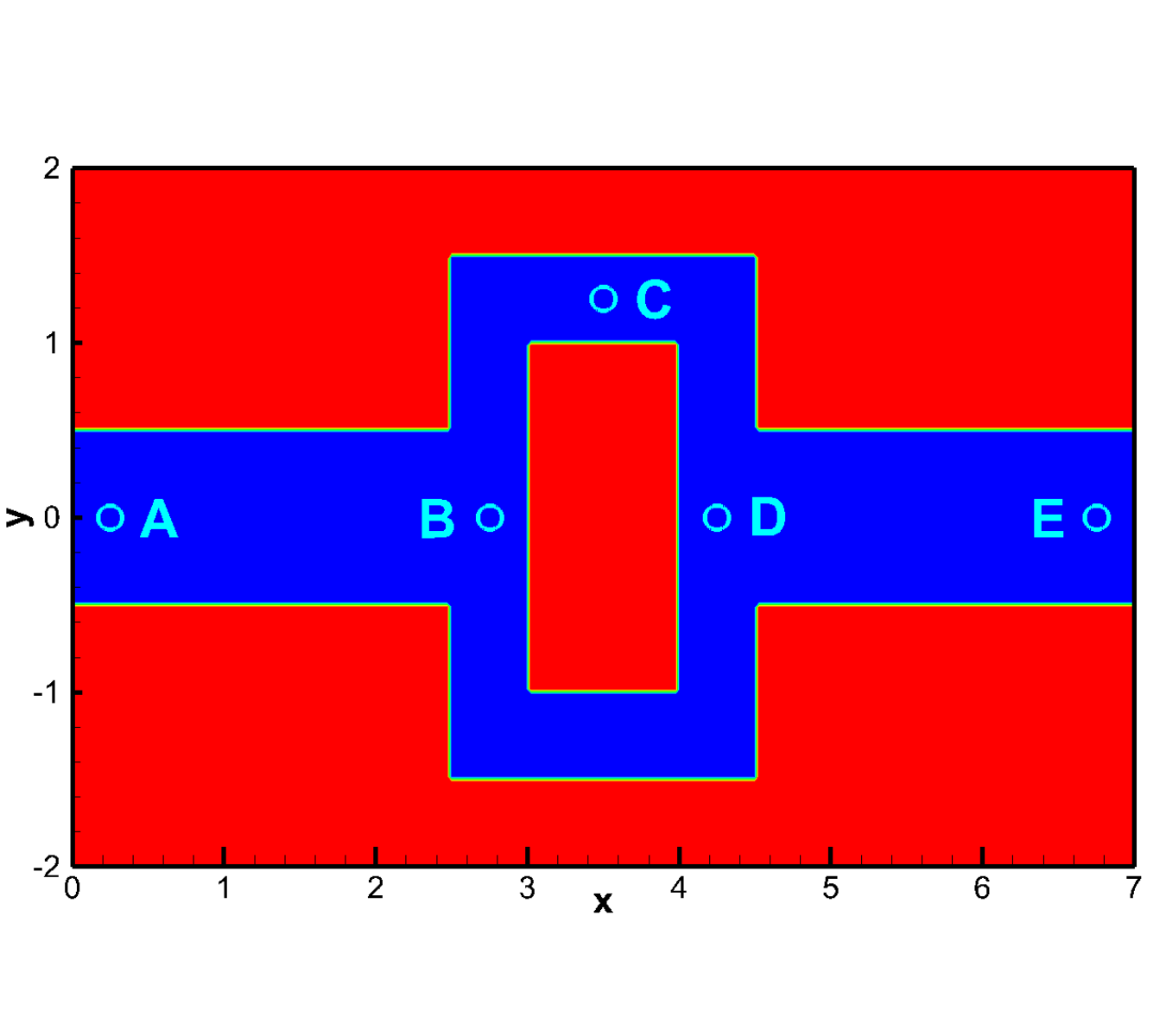}
\end{minipage}\\[-3mm]
\begin{minipage}[h]{1.0\textwidth}
\centering
\begin{tabular}{c|c|c|c}
\hline
  ~& $\rho$ & $\sigma$  & $\sigma_f$\\ \hline
   blue region &  $0.01 $g/cm$^3$  & $0.2$ cm$^{-1}$ &  $2.0\times10^3$   \\ \hline
   red  regions & $10$ g/cm$^3$ & $2000$ cm$^{-1}$ & $0$       \\ \hline
 \end{tabular}
\end{minipage}\\
 \caption{\small  {\sc Example 6.}
  Initial configuration of the Tophat test problem.} \label{layout of Tophat test problem}
 \end{figure}


 \begin{table}[h]
\centering
\begin{tabular}{ccccc}
\hline
 ~ min $\rho$  ~ &~~~~ $P_{7}$ ~~~~ &~~ $FP_{7}$ ~~ & ~ $PPFP_{7}$  ~ &  $PPFP_{7}$-$S$  \\ \hline
  at $t=0.5$ ns  &  -2.32E-4    &  -3.32E-5  &  -      &  -     \\ \hline
  at $t=1.0$ ns  &  -1.85E-4    &  -3.73E-6  &  -      &  -        \\ \hline
 \end{tabular}
 \caption{\small  {\sc Example 6.} Minimal negative values of the radiative energy density $\rho$. }
 \label{Table: table of min rho in Tophat test}
 \end{table}

 \begin{figure}[htbp]
{
\begin{minipage}[h]{0.5\textwidth}
\centering
\centerline{\includegraphics[width=2.5in]{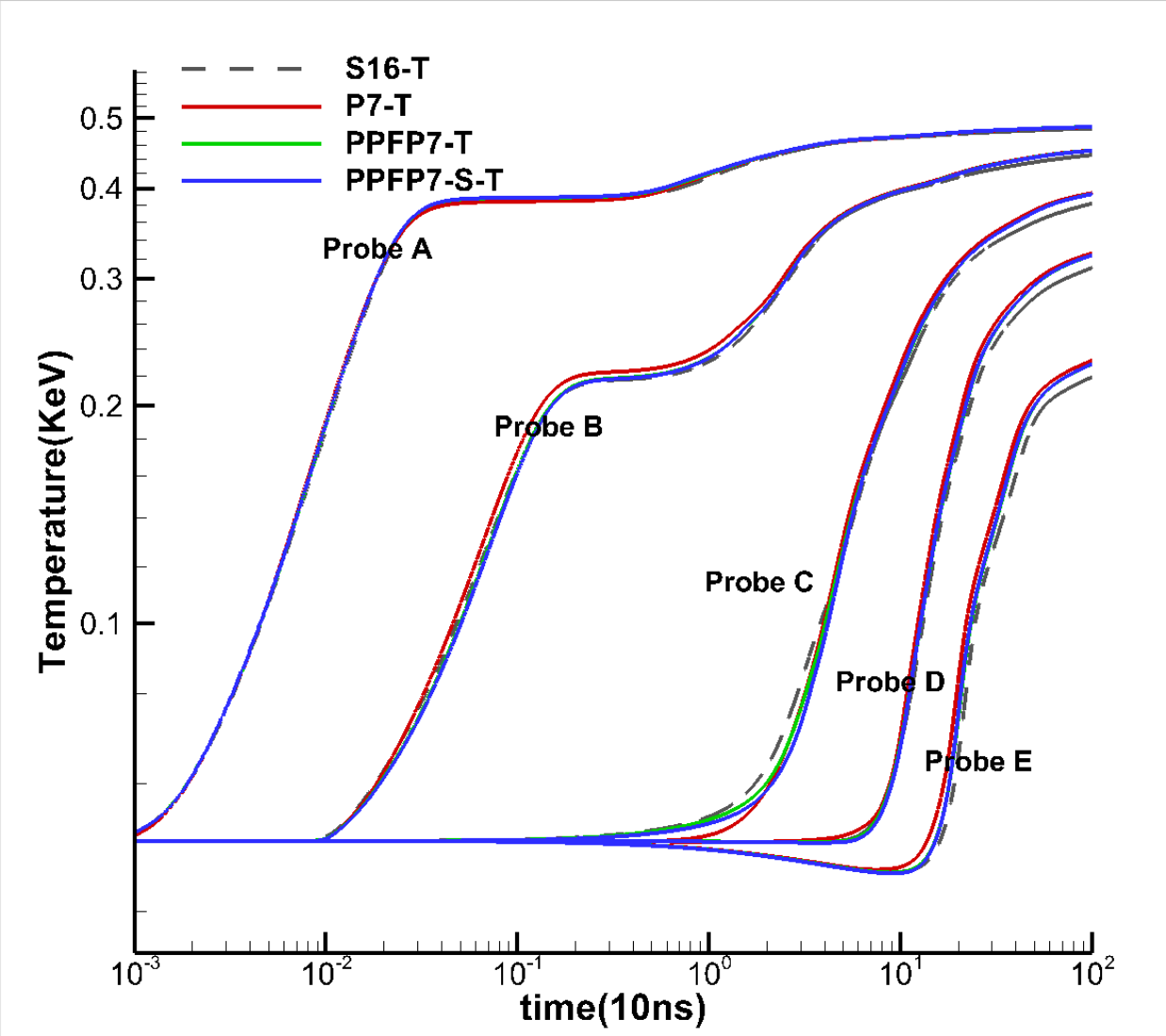}}
\end{minipage}
\begin{minipage}[h]{0.5\textwidth}
\centering
\centerline{\includegraphics[width=2.5in]{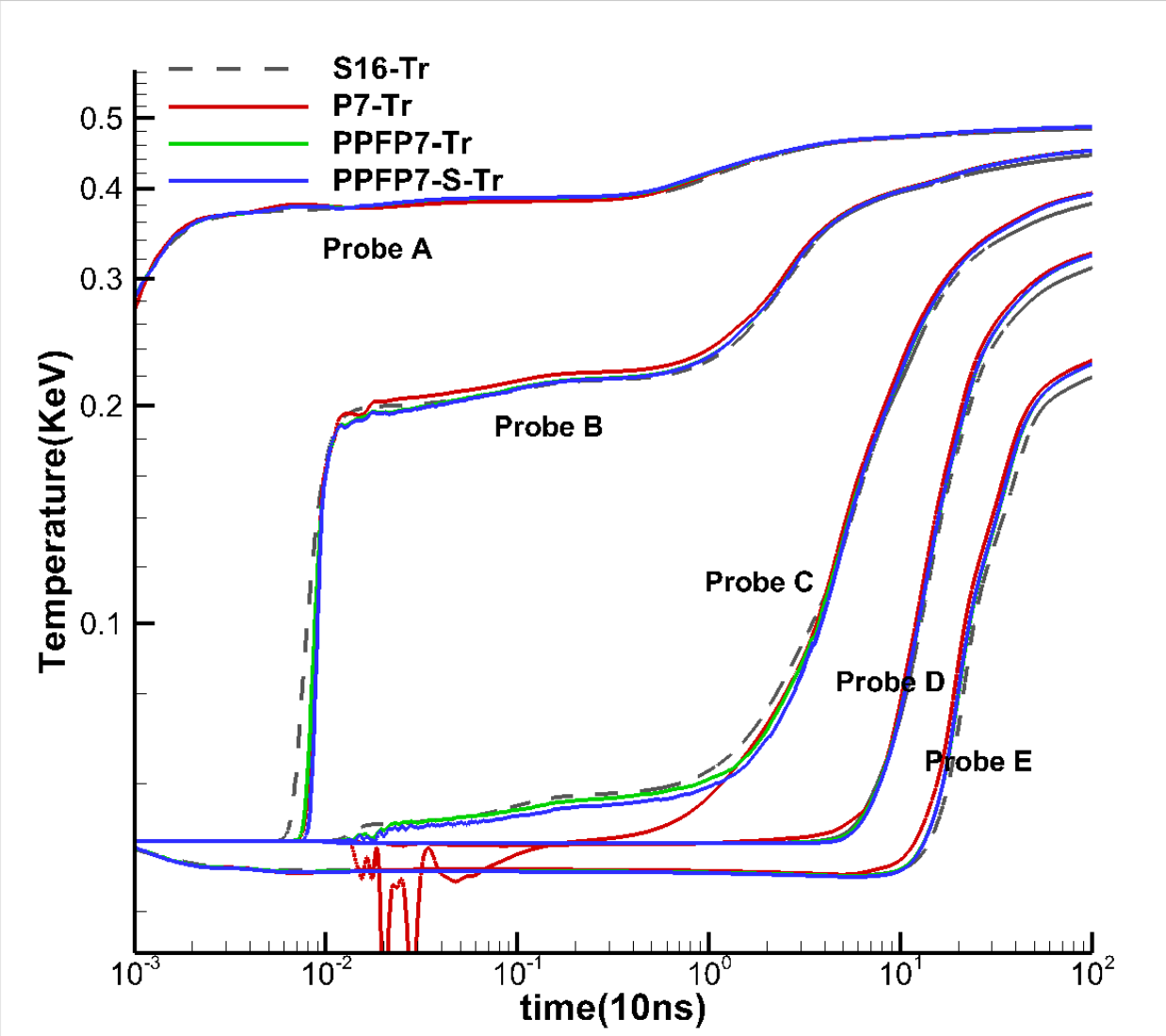}}%
\end{minipage}%
}
 \vskip -6pt
\caption{\small {\sc Example 6.}
The time evolution of $T$ and $T_r$ at five probe points. The temperature unit is keV and the time unit is 10 ns.
}
\label{Figure: T and Tr of five point in Tophat test}
\end{figure}

\begin{figure}[htbp]
{
\begin{minipage}[h]{0.5\textwidth}
\centerline{ \hspace{25mm}
\includegraphics[width=1.25in]{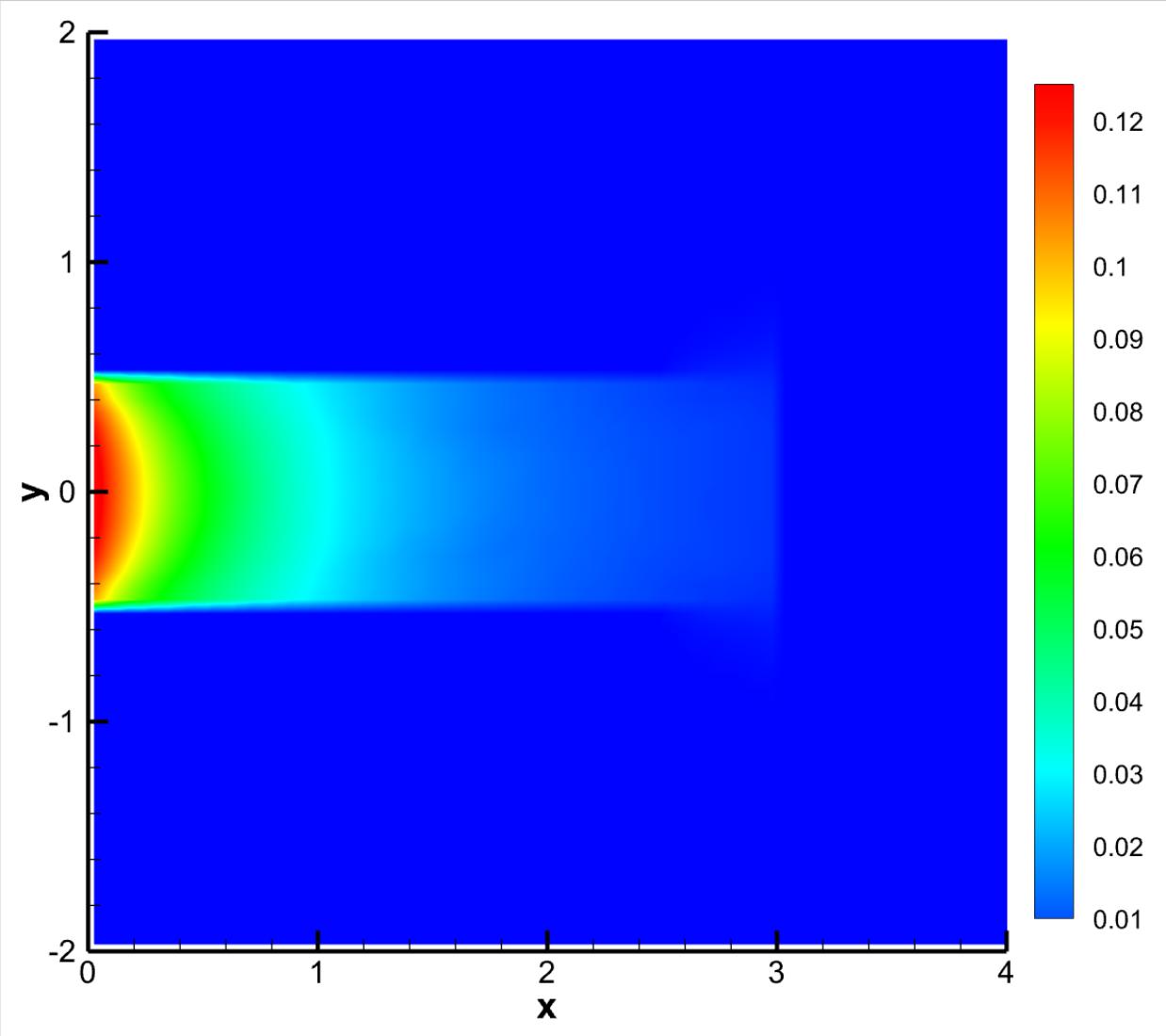}}
\vspace{-2mm}
 \centerline{  \small \hspace{32mm}  $P_{29}$ ~t=0.5ns }
\end{minipage}
\begin{minipage}[h]{0.5\textwidth}
\centerline{\hspace{-35mm}
\includegraphics[width=1.25in]{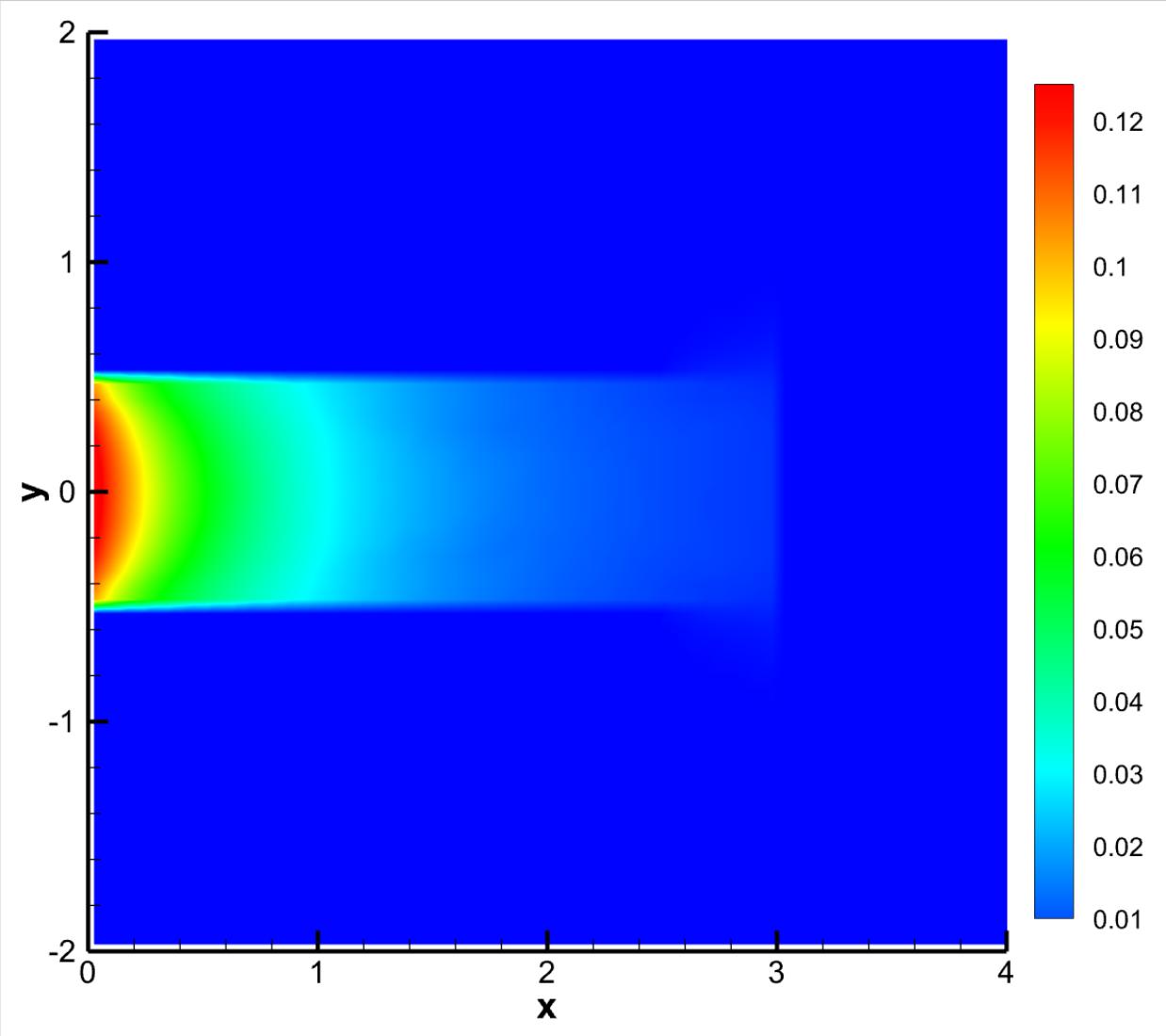}}
\vspace{-2mm}
\centerline{ \small \hspace{-43mm}  $P_{29}$ ~t=1.0ns }
\end{minipage}
}

{
\begin{minipage}[h]{0.24\textwidth}
\centerline{
\includegraphics[width=1.25in]{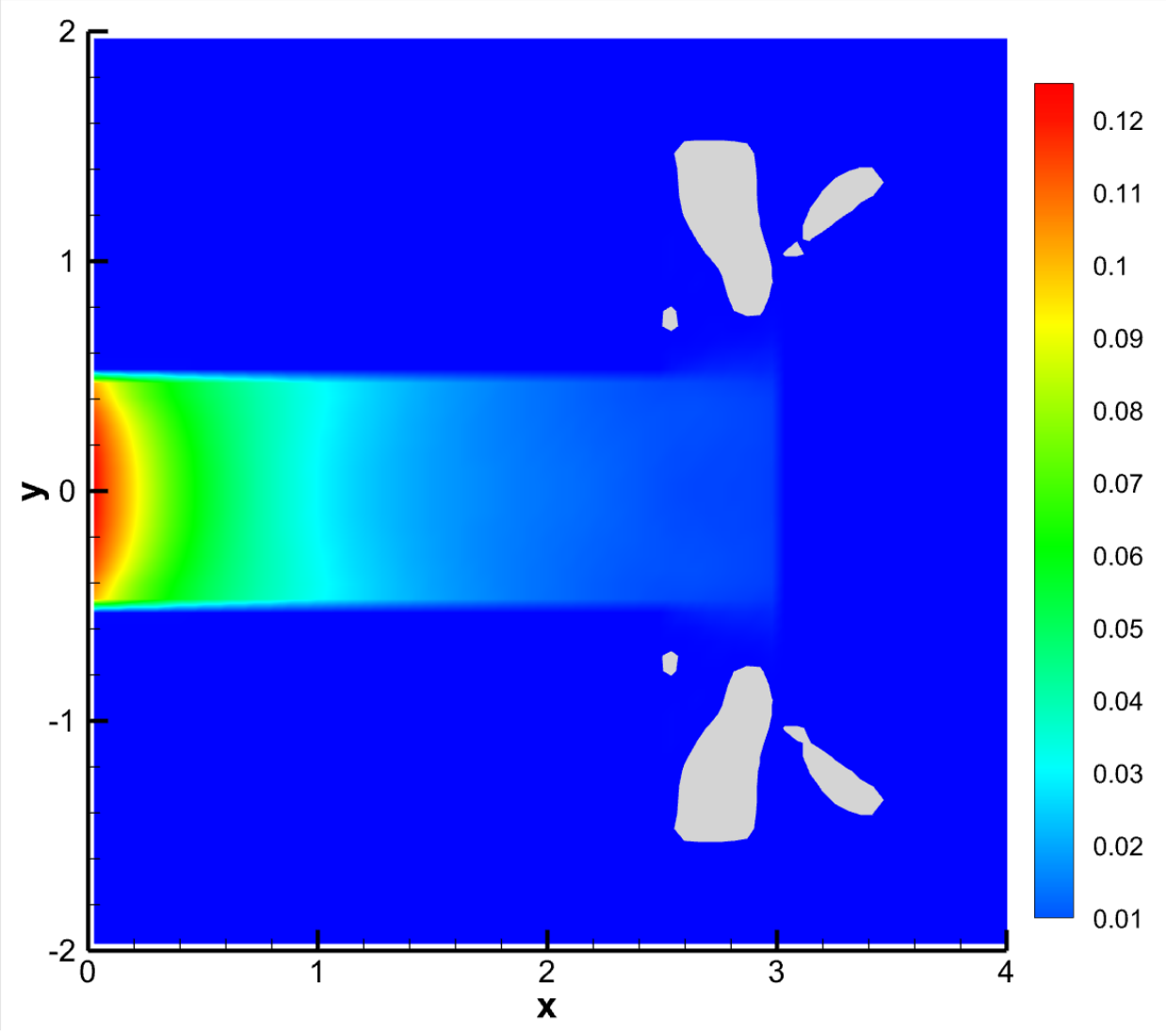}}
\vspace{-2mm}
 \centerline{  \small (a1) $P_7$ ~t=0.5ns}
\end{minipage}
\begin{minipage}[h]{0.24\textwidth}
\centerline{
\includegraphics[width=1.25in]{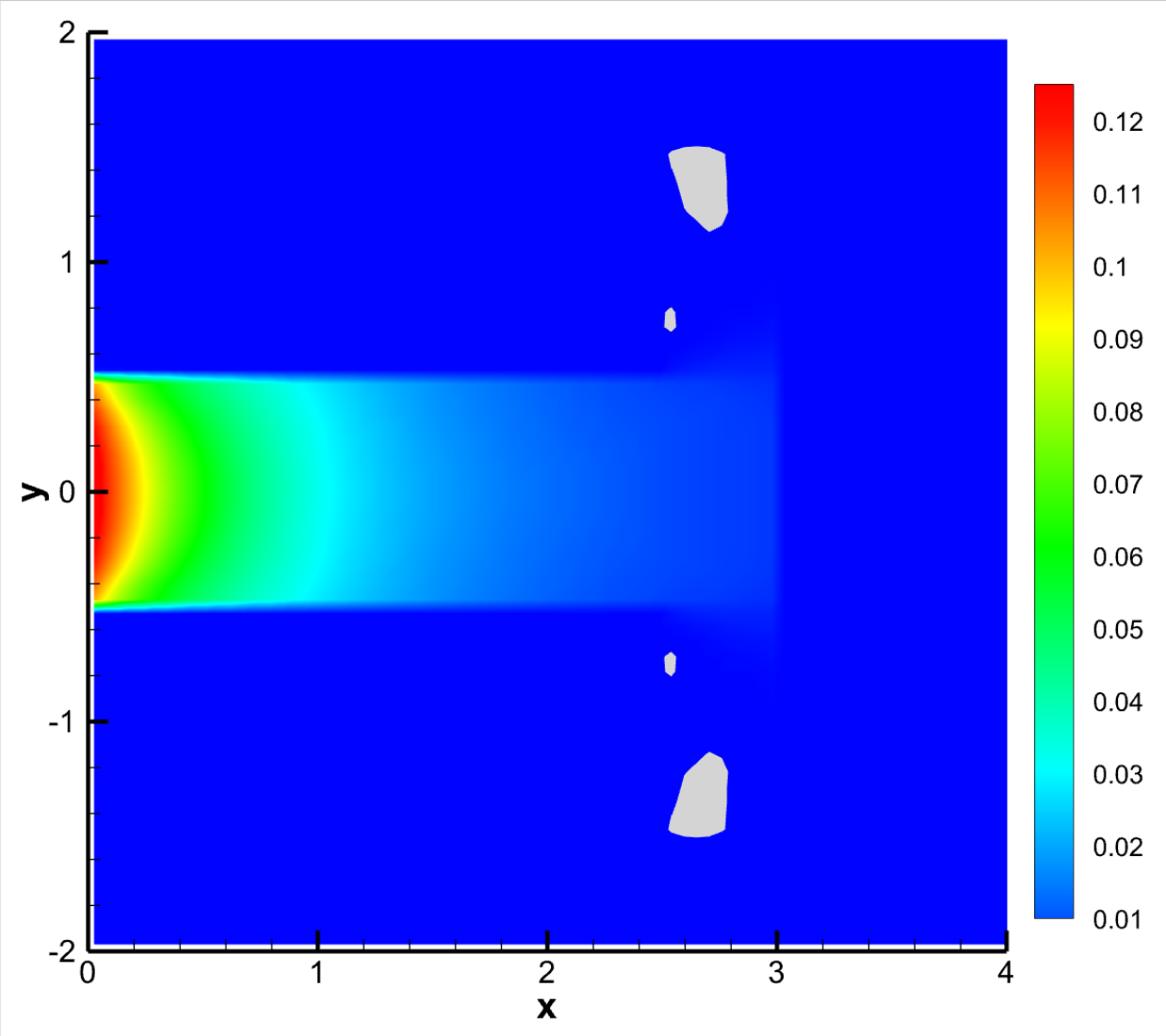}}
\vspace{-2mm}
\centerline{ \small (b1)  $FP_7$ ~t=0.5ns}
\end{minipage}
\begin{minipage}[h]{0.24\textwidth}
\centerline{
\includegraphics[width=1.25in]{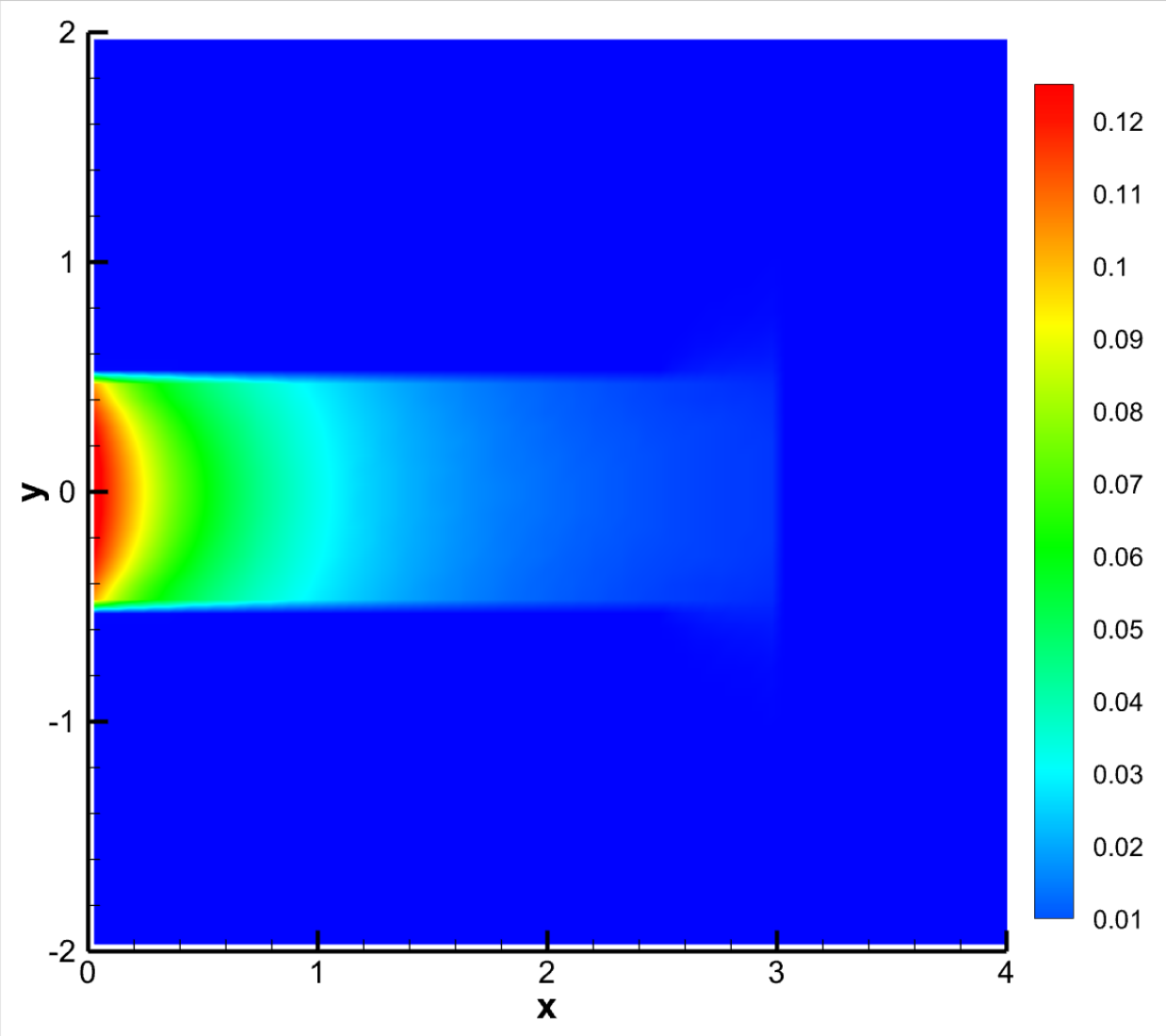}}
\vspace{-2mm}
\centerline{ \small (c1) $PPFP_7$ ~t=0.5ns}
\end{minipage}
\begin{minipage}[h]{0.24\textwidth}
\centerline{
\includegraphics[width=1.25in]{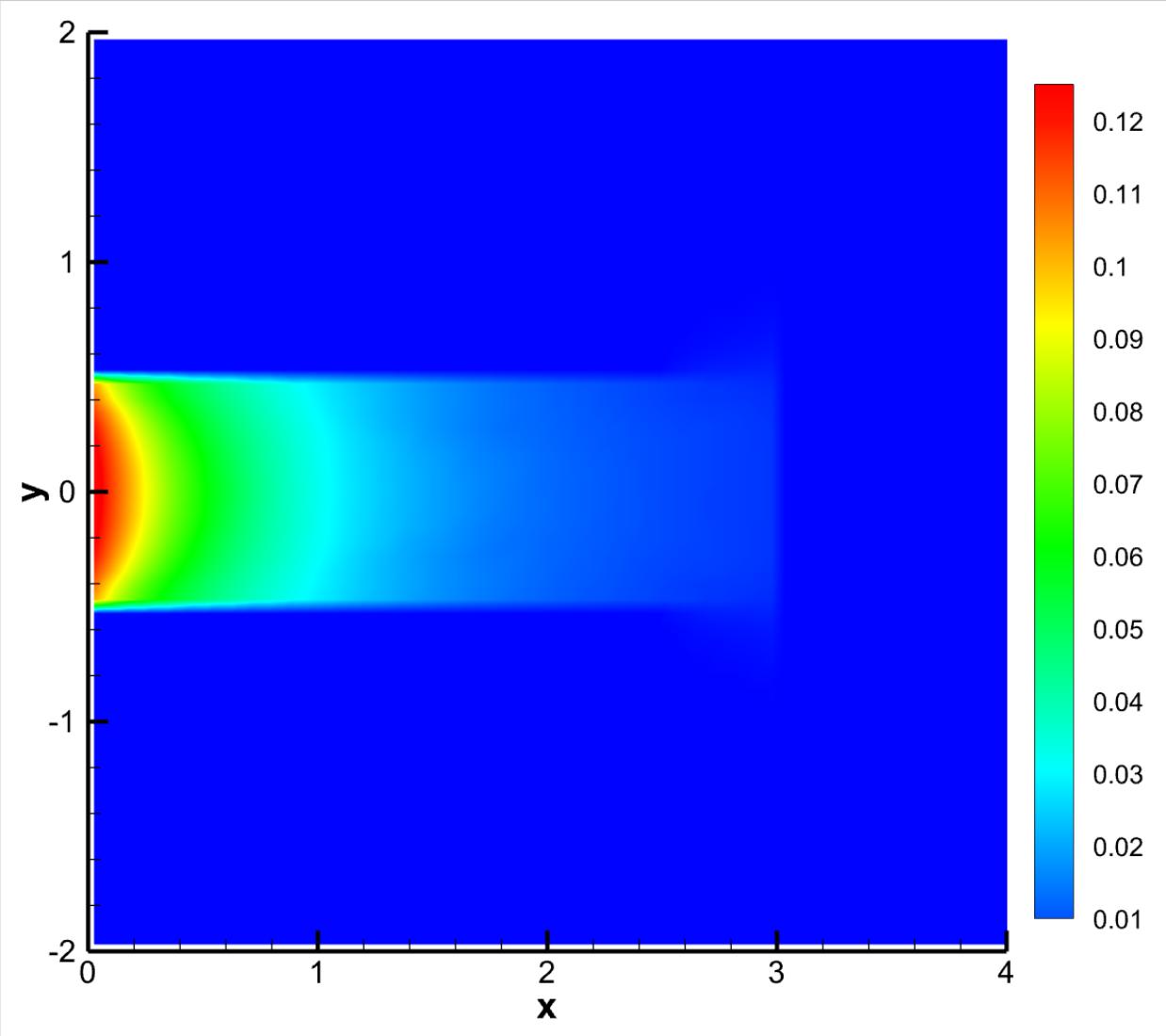}}
\vspace{-2mm}
\centerline{ \small (d1) $PPFP_7$-S t=0.5ns}
\end{minipage}
}

{
\begin{minipage}[h]{0.24\textwidth}
\centerline{
\includegraphics[width=1.25in]{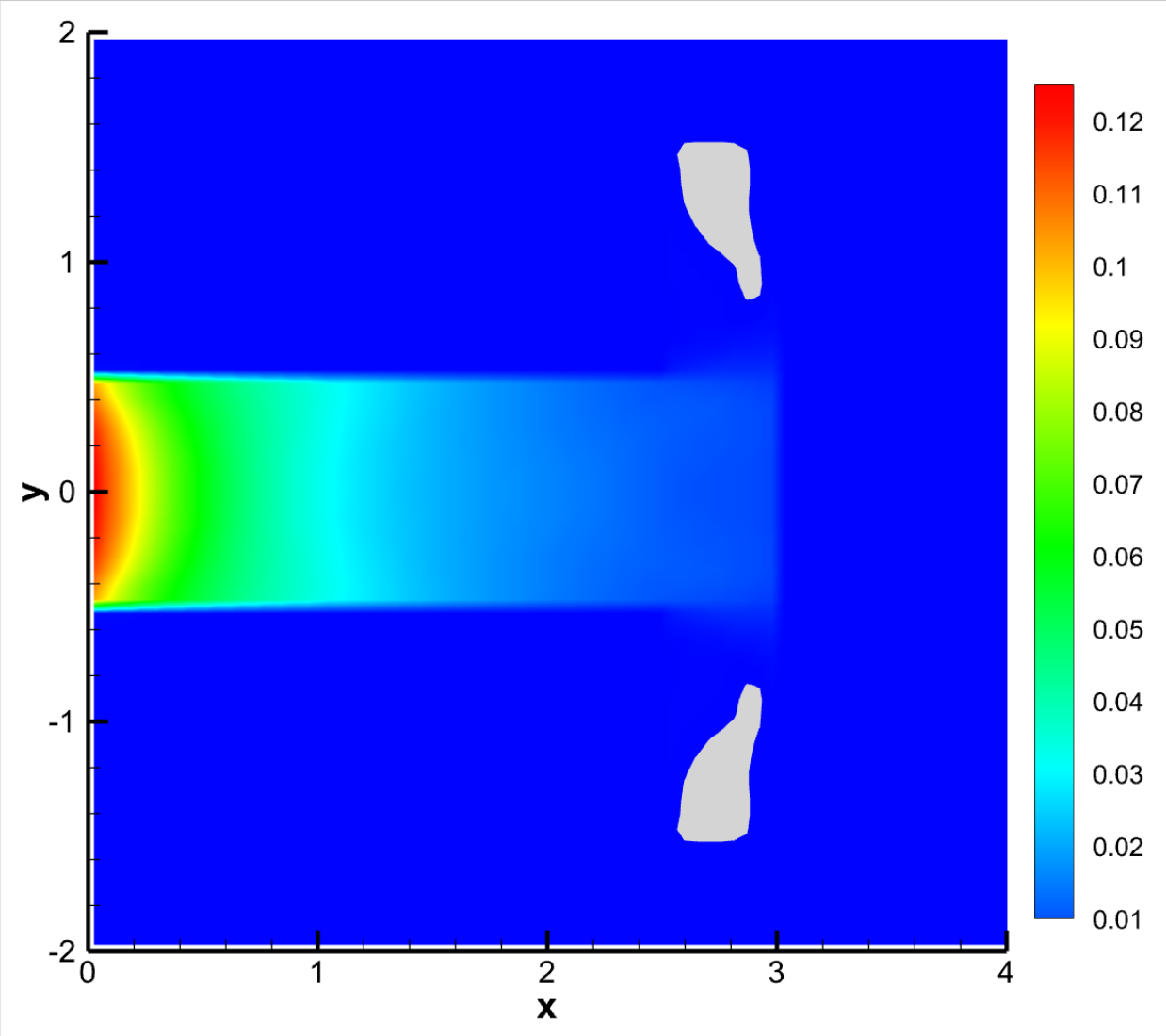}}
\vspace{-2mm}
 \centerline{  \small (a2) $P_7$ ~t=1.0ns}
\end{minipage}
\begin{minipage}[h]{0.24\textwidth}
\centerline{
\includegraphics[width=1.25in]{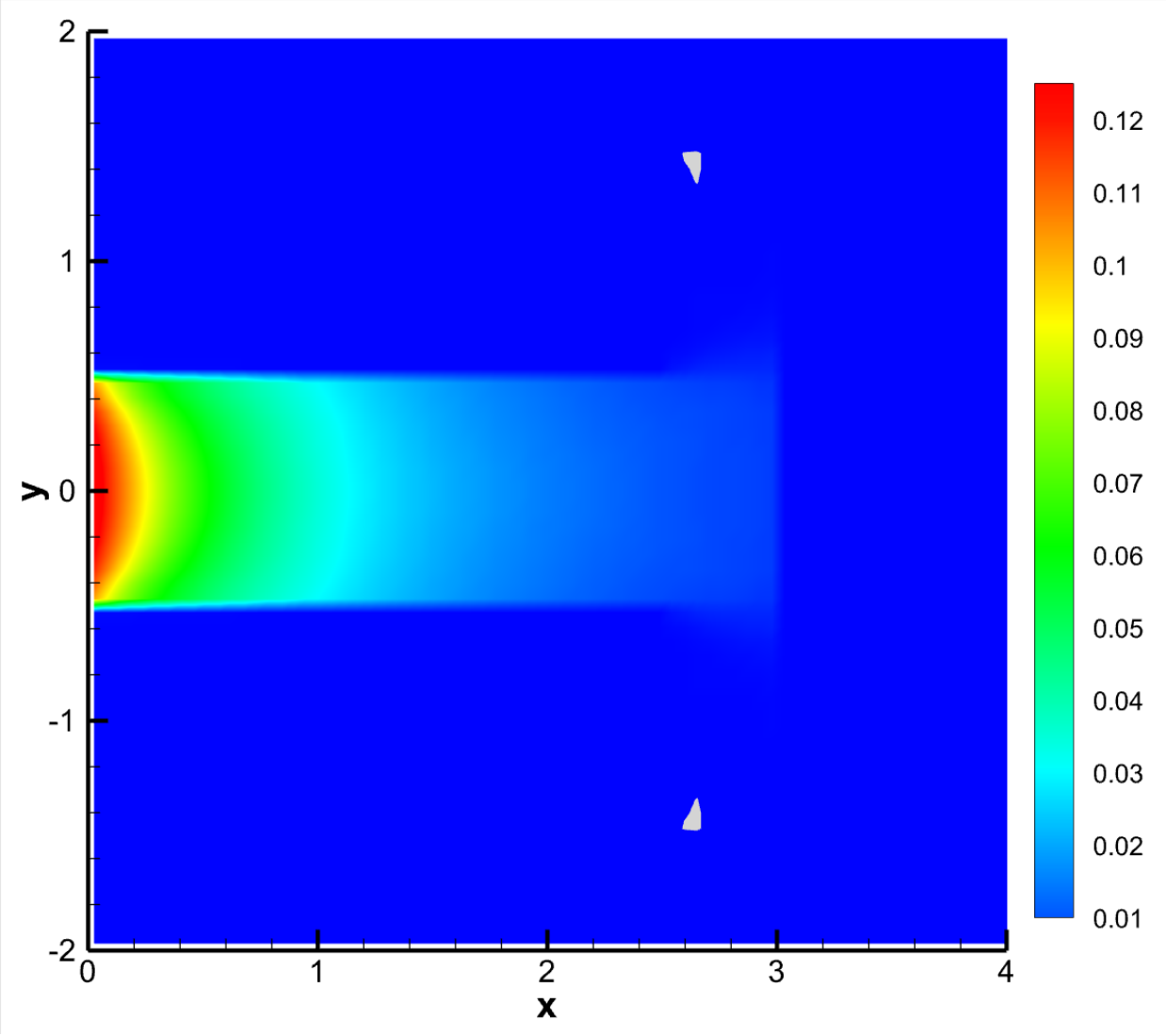}}
\vspace{-2mm}
\centerline{ \small (b2)  $FP_7$ ~t=1.0ns}
\end{minipage}
\begin{minipage}[h]{0.24\textwidth}
\centerline{
\includegraphics[width=1.25in]{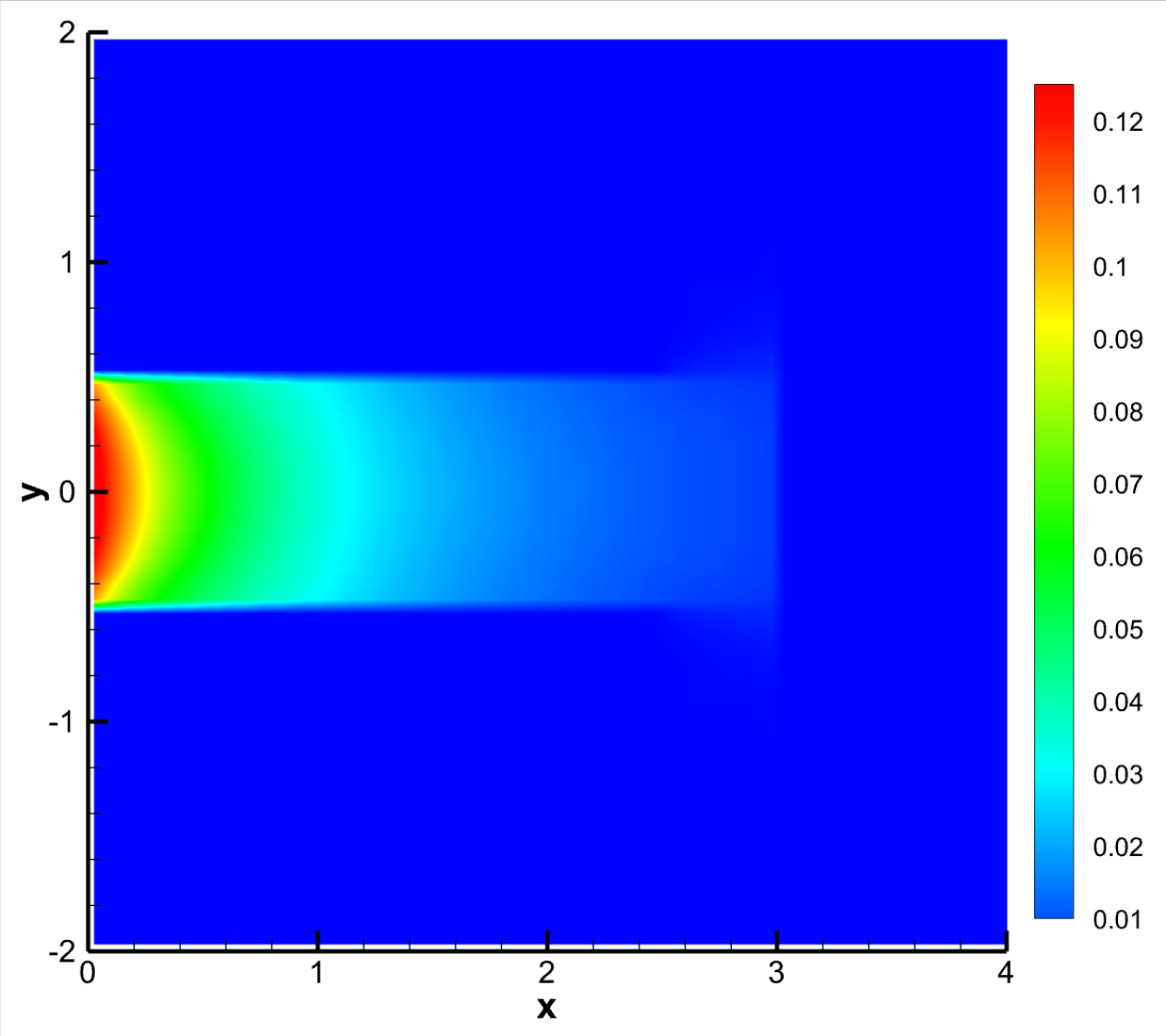}}
\vspace{-2mm}
\centerline{ \small (c2) $PPFP_7$ ~t=1.0ns}
\end{minipage}
\begin{minipage}[h]{0.24\textwidth}
\centerline{
\includegraphics[width=1.25in]{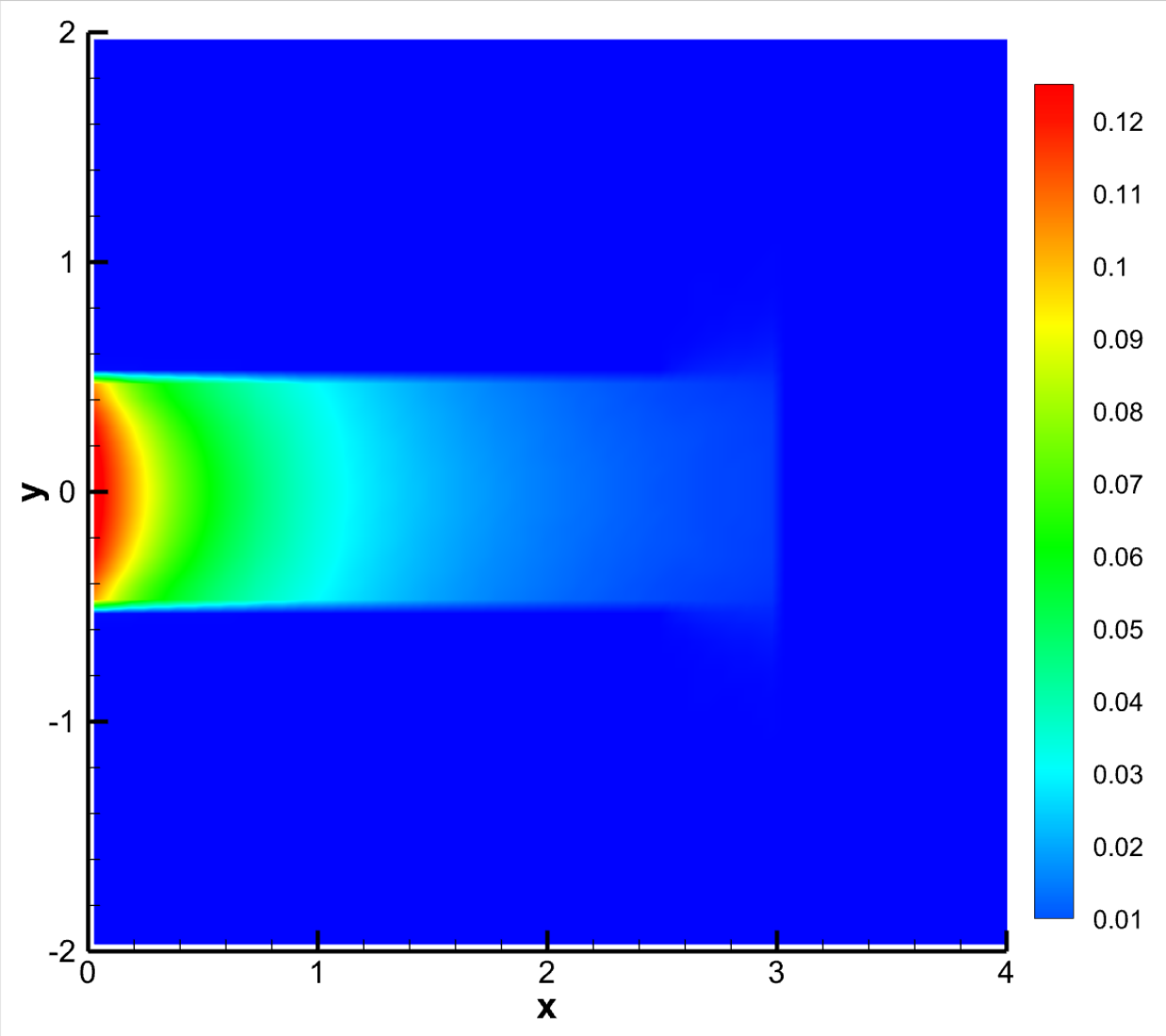}}
\vspace{-2mm}
\centerline{ \small (d2) $PPFP_7$-S t=1.0ns}
\end{minipage}
}

\caption{\small {\sc Example 6.} Contour of $\rho$. In the white regions $\rho$ becomes negative.
}
\label{Figure: contours of rho at t=0.5 and t=1.0 in Tophat test}
\end{figure}

\begin{figure}[htbp]
{
\begin{minipage}[t]{0.5\textwidth}
\centering
\centerline{\includegraphics[width=2.35in]{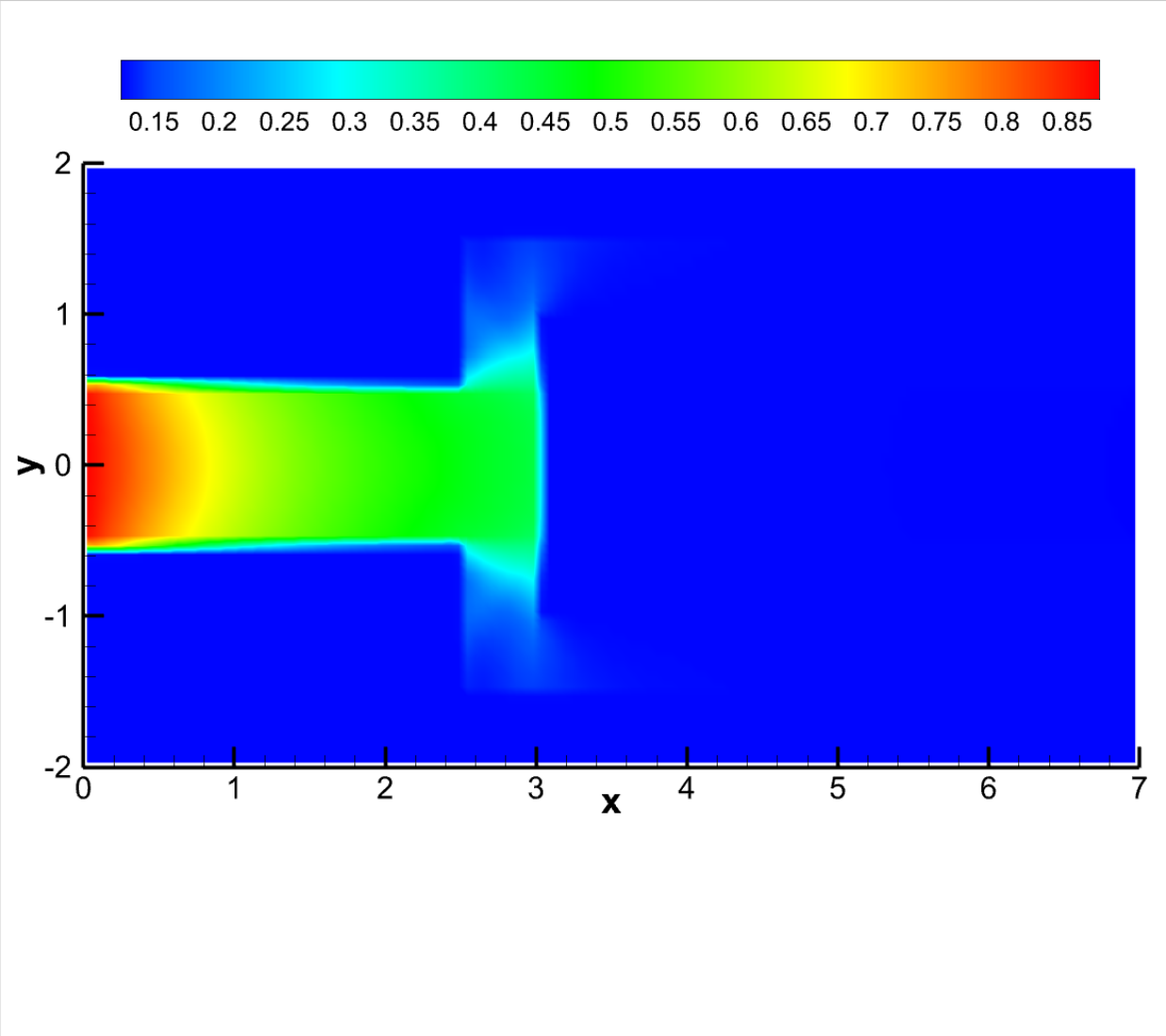}}
\vspace{-10mm}
\centerline{$PPFP_7$~  t=8ns}
\end{minipage}
\begin{minipage}[t]{0.5\textwidth}
\centering
\centerline{\includegraphics[width=2.35in]{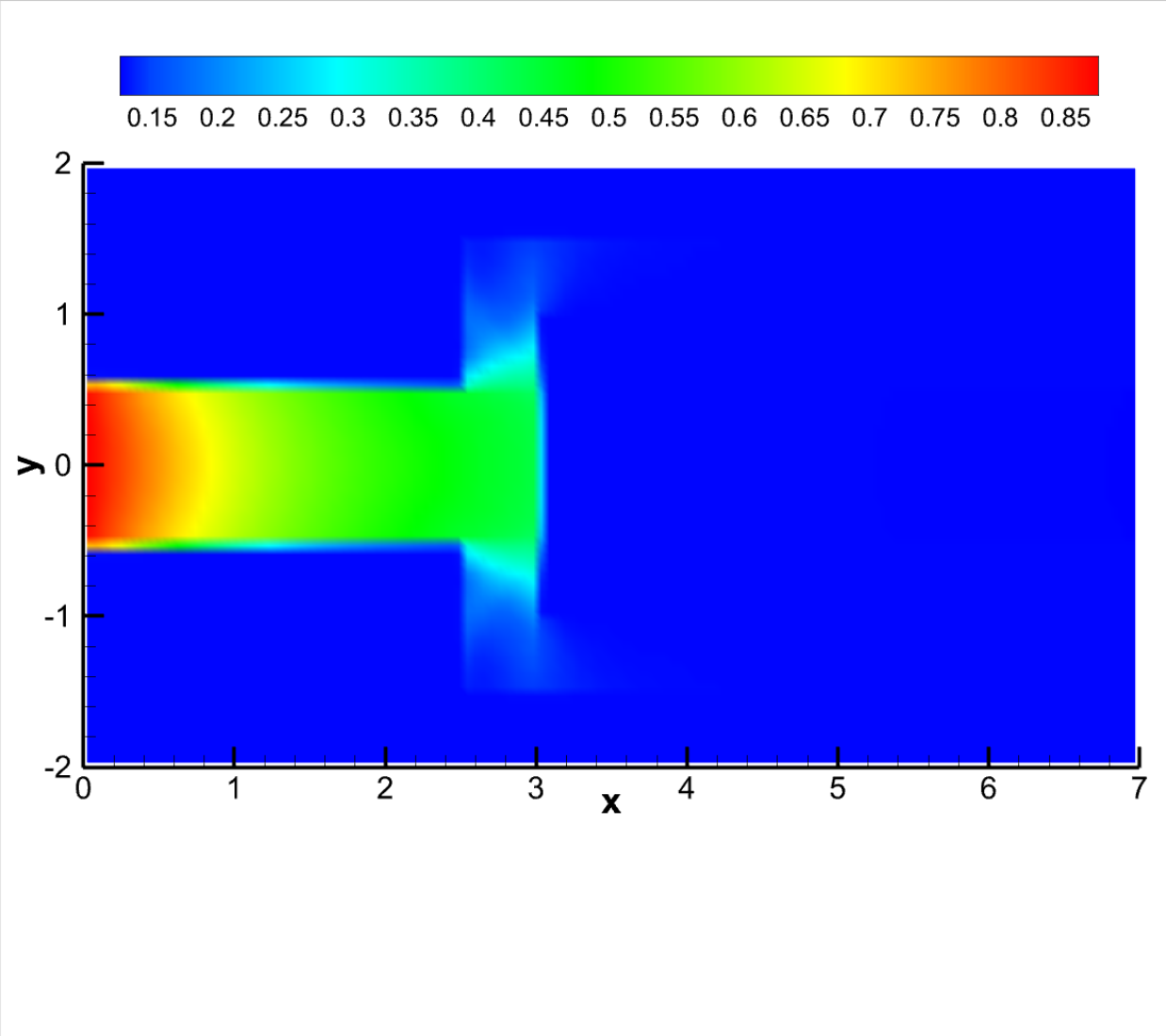}}
\vspace{-10mm}
\centerline{$PPFP_7$-S~  t=8ns }
\end{minipage}%
}
{
\begin{minipage}[t]{0.5\textwidth}
\centering
\centerline{\includegraphics[width=2.35in]{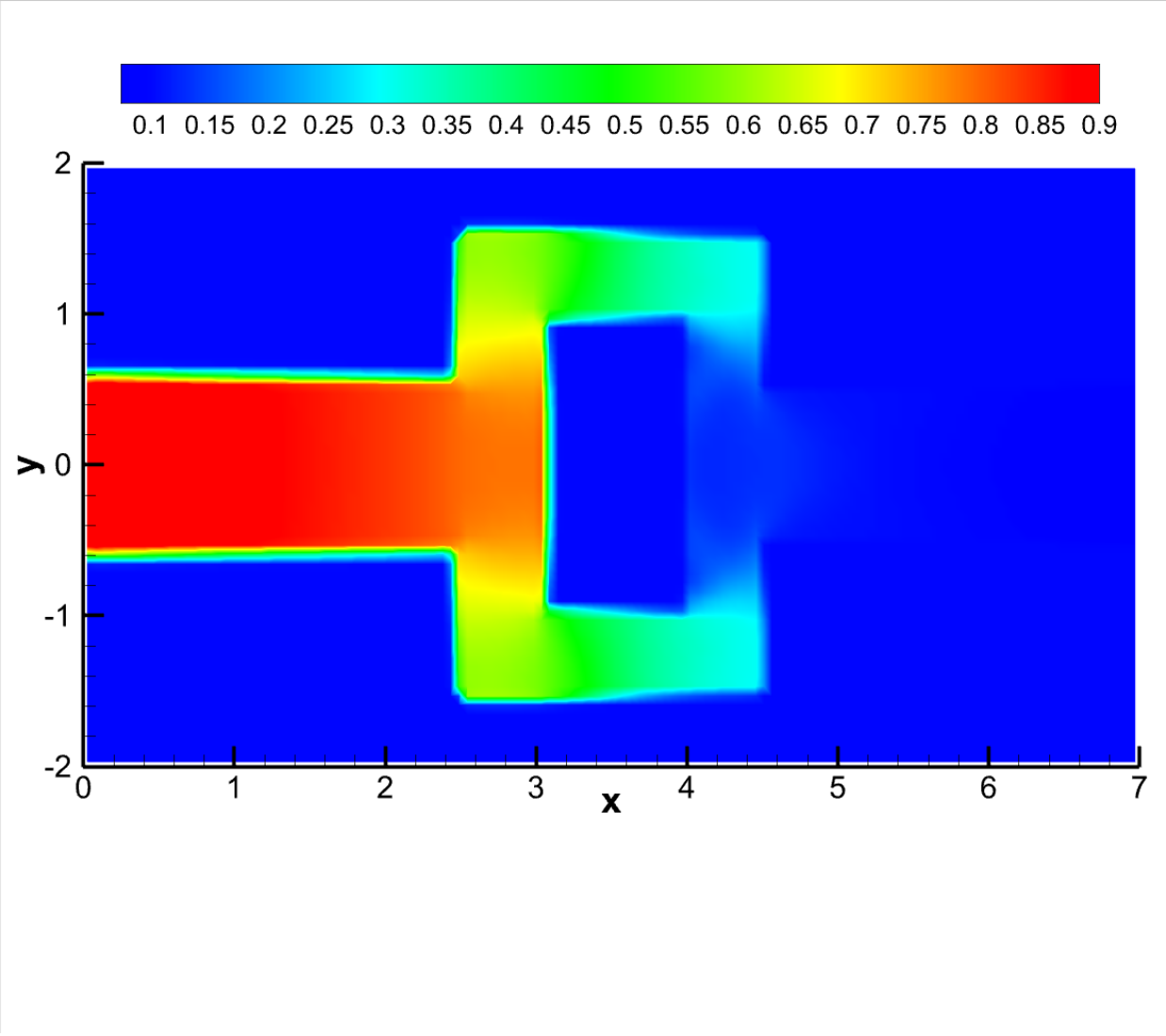}}
\vspace{-10mm}
\centerline{$PPFP_7$~  t=94ns}
\end{minipage}
\begin{minipage}[t]{0.5\textwidth}
\centering
\centerline{\includegraphics[width=2.35in]{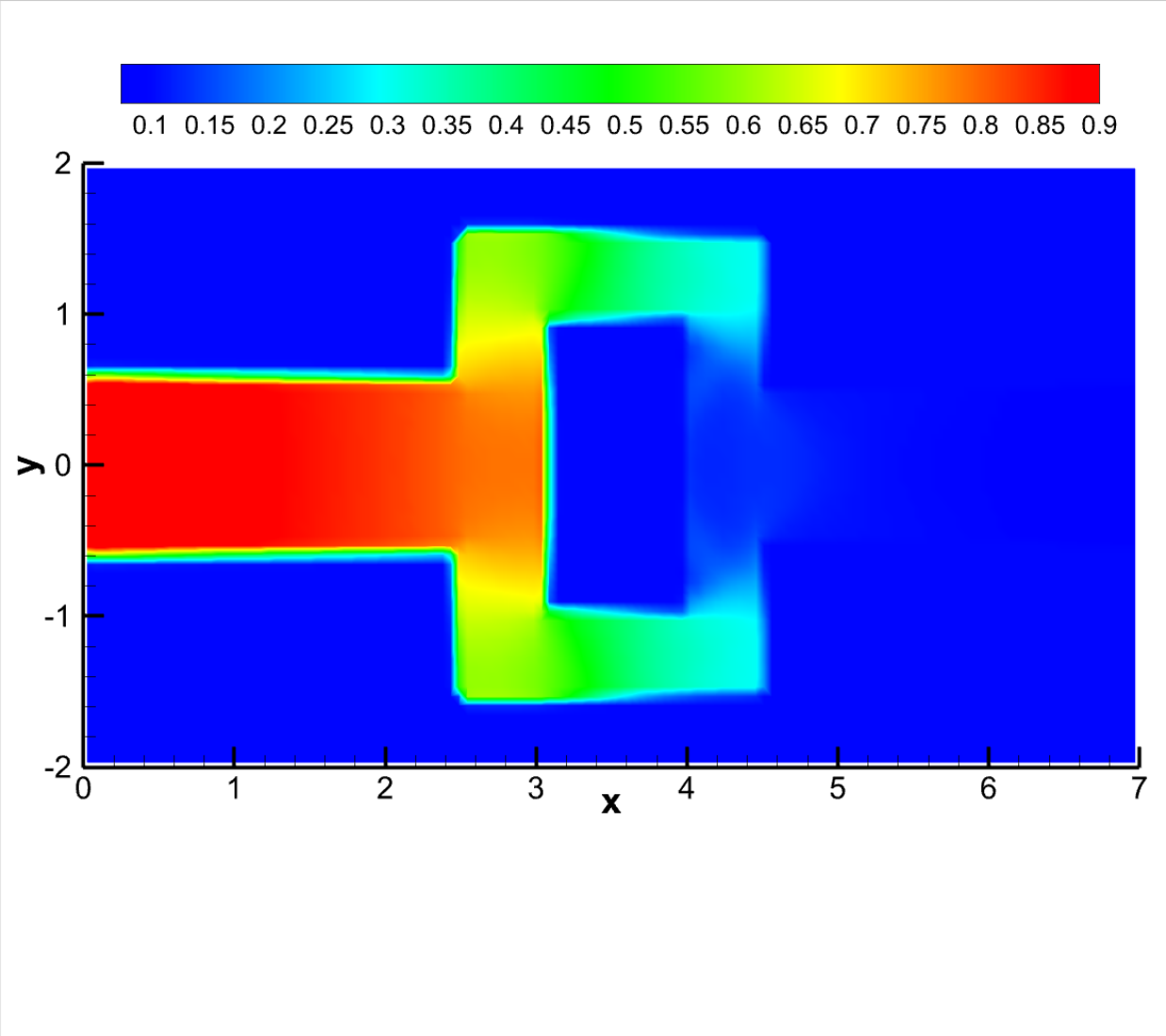}}
\vspace{-10mm}
\centerline{$PPFP_7$-S~  t=94ns }
\end{minipage}
}
{
\begin{minipage}[t]{0.5\textwidth}
\centering
\centerline{\includegraphics[width=2.35in]{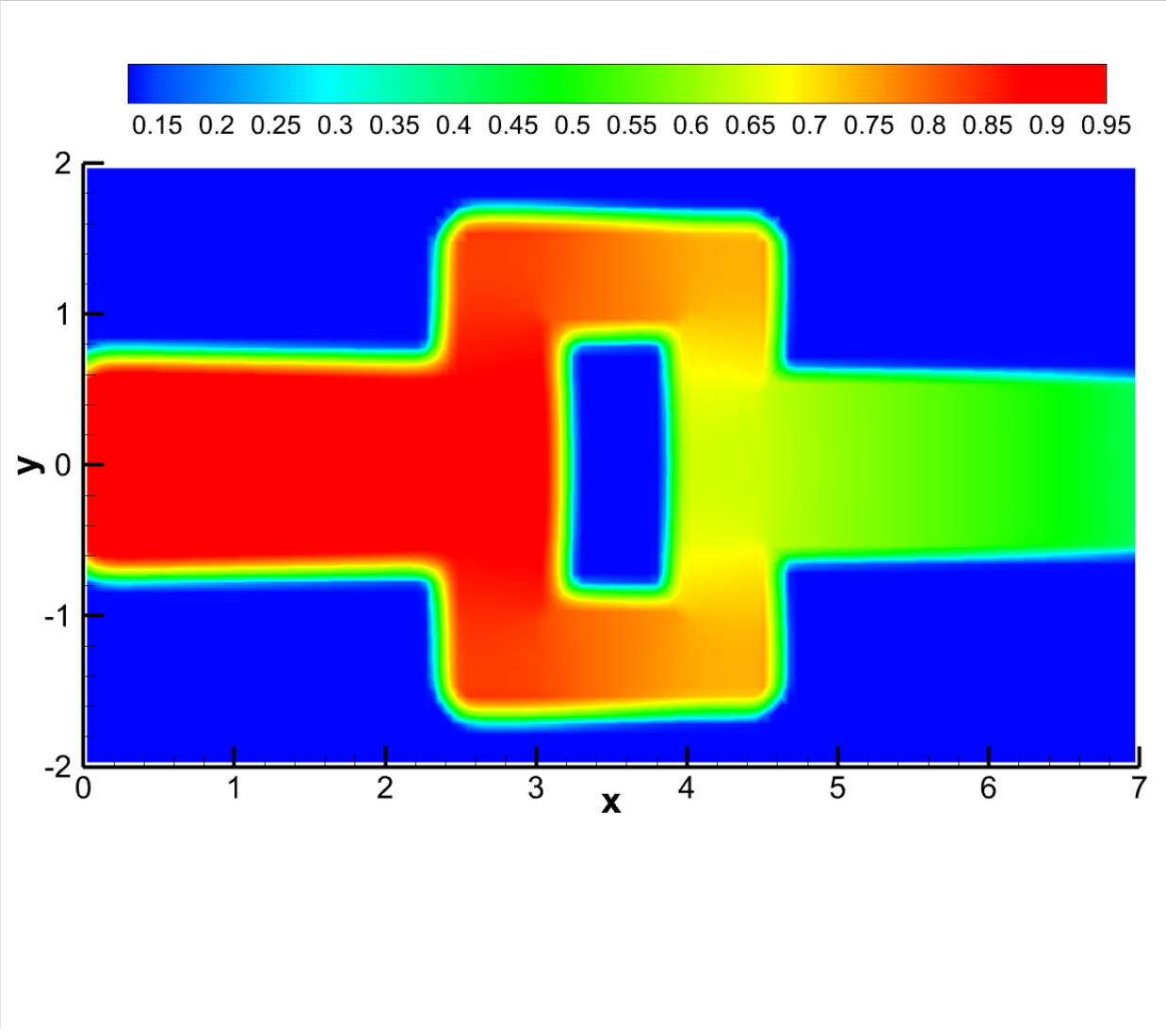}}
\vspace{-10mm}
\centerline{$PPFP_7$~  t=1000ns}
\end{minipage}
\begin{minipage}[t]{0.5\textwidth}
\centering
\centerline{\includegraphics[width=2.35in]{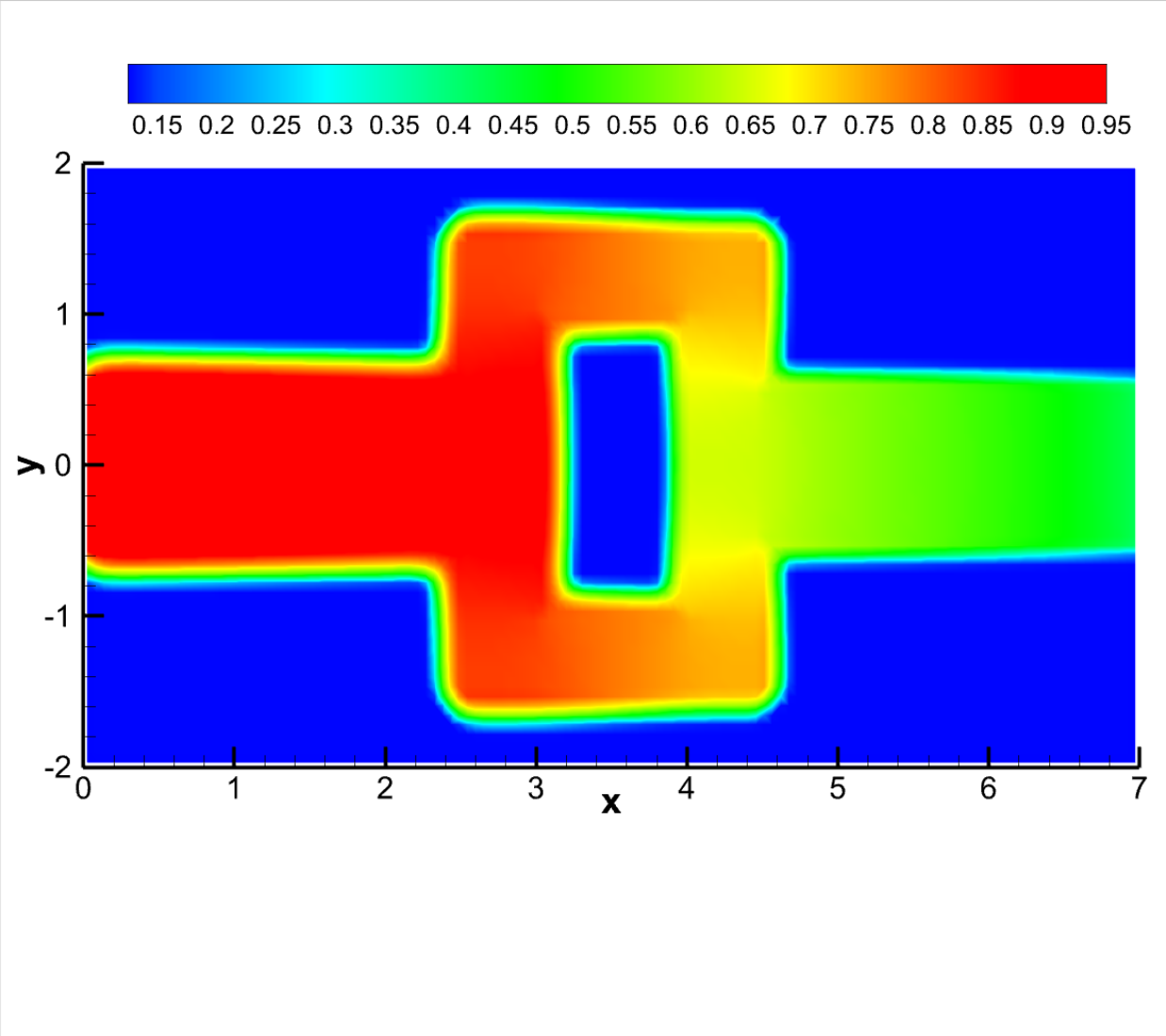}}
\vspace{-10mm}
\centerline{$PPFP_7$-S~  t=1000ns}
\end{minipage}
}
\vspace{-8mm}
\caption{\small {\sc Example 6.} Contours of $T$. The temperature unit is $0.5$ keV.} \label{Figure: T at different times in Tophat Test}
\end{figure}

\section{Conclusion}
In this paper, we have extended the previous first-order positive and asymptotic preserving
$FP_N$ scheme for the nonlinear gray radiative transfer equations \cite{Xu-Jiang-Sun-2021} to a spatial second-order accuracy one.
The scheme applies the filtered spherical harmonics approach to discretize the angular variable, and UGKS to discretize the time and
spatial variables.

Due to the $FP_N$ angular discretization, the scheme is almost free of ray effects. It, meanwhile, can reduce the Gibbs phenomena
by the filter term. In the spirit of UGKS, it is easy to show that the scheme is AP.
Moreover, through a detailed analysis of the spatial second order fluxes which are obtained by using the IMC linearization method,
we are able to find the sufficient conditions that guarantee the positivity of the radiative energy density and material temperature.
Finally, by employing much cheap linear scaling limiters to make these conditions hold, the desired spatial second-order positive
and asymptotic preserving scheme, i.e., the $PPFP_N$-based UGKS, is obtained. The numerical examples validate the spatial second-order
accuracy of the scheme as expected. Furthermore, in order to reduce the computational costs of the fluxes in $PPFP_N$-based UGKS
in the regimes $\epsilon\ll 1$ and $\epsilon=O(1)$, a simplified version, called the $PPFP_N$-based SUGKS, is presented.

To our best knowledge, this is the first time that a spatial second-order positive, asymptotic preserving and almost free of ray effects
 scheme has been constructed for the nonlinear radiative transfer equations without operator splitting.
A number of standard (benchmark) problems have been tested to validate all the mentioned properties of the proposed schemes.

\section*{Acknowledgements}
The current research is supported by NSFC (Grant No. 12001451) for Xu, and by National Key R\&D Program (2020YFA0712200), and
National Key Project (GJXM92579), the Sino-German Science Center (Grant No. GZ 1465)
and the ISF-NSFC joint research program (Grant No. 11761141008) for Jiang, and by
National Key Project (GJXM92579) for Sun.

%

\section{Appendix. Proof of Lemma \ref{lemma: analysis of fluxes}}\label{section: appendix}

In this appendix, we show the inequalities \eqref{eq1-analysis of fluxes}-\eqref{eq4-analysis of fluxes}
in Lemma \ref{lemma: analysis of fluxes}.
\vspace{2mm}

{\bf Proof of \eqref{eq1-analysis of fluxes}:}
 \vspace{2mm}

 After a straightforward calculation, we have
\begin{eqnarray}\label{Appendix: eq1}
\nonumber
& &
\Bigg| \frac{ \Phi^{n+1}_{i-1/2,j}(\tilde{b})- \Phi^{n+1}_{i+1/2,j}(\tilde{b}) }{\Delta x} \Bigg|
\\\nonumber
&=& \Bigg|
\frac{ \tilde{b}^{n+1}_{i-1/2,j}\left(
{\bm\langle}\xi^2\vec{\bm \psi} {\bm\rangle}_{1,4}\cdot\delta_x\vec{\bm I}^n_{i-1,j}
+{\bm\langle}\xi^2\vec{\bm \psi} {\bm\rangle}_{2,3}\cdot\delta_x\vec{\bm I}^n_{i,j}
\right)   }{\Delta x}\\\nonumber
& & \qquad -
\frac{ \tilde{b}^{n+1}_{i+1/2,j}\left(
{\bm\langle}\xi^2\vec{\bm \psi}{\bm\rangle}_{1,4}\cdot\delta_x\vec{\bm I}^n_{i,j}
+{\bm\langle}\xi^2\vec{\bm \psi} {\bm\rangle}_{2,3}\cdot\delta_x\vec{\bm I}^n_{i+1,j}
\right)    }{\Delta x}\Bigg|
\\\nonumber
&=&\Bigg|
{\bm\langle}\xi^2\vec{\bm \psi}{\bm\rangle}_{1,4}\cdot
\frac{
  \tilde{b}^{n+1}_{i-1/2,j}\delta_x\vec{\bm I}^n_{i-1,j}
-\tilde{b}^{n+1}_{i+1/2,j}\delta_x\vec{\bm I}^n_{i,j}
   }{\Delta x}
 +
 {\bm\langle}\xi^2\vec{\bm \psi} {\bm\rangle}_{2,3}\cdot \frac{
  \tilde{b}^{n+1}_{i-1/2,j}\delta_x\vec{\bm I}^n_{i,j}
-\tilde{b}^{n+1}_{i+1/2,j}\delta_x\vec{\bm I}^n_{i+1,j}
   }{\Delta x}\Bigg|
   \\\nonumber
&=&\Bigg|
{\bm\langle}\xi^2\vec{\bm \psi} {\bm\rangle}_{1,4}\cdot
\frac{
  \tilde{b}^{n+1}_{i-1/2,j}
  \left( \delta_x\vec{\bm I}^n_{i-1,j}-\delta_x\vec{\bm I}^n_{i,j} \right)
+\left(\tilde{b}^{n+1}_{i-1/2,j}-\tilde{b}^{n+1}_{i+1/2,j}\right)\delta_x\vec{\bm I}^n_{i,j}
   }{\Delta x}\\
& &\quad +
 {\bm\langle}\xi^2\vec{\bm \psi} {\bm\rangle}_{2,3} \cdot\frac{
  \tilde{b}^{n+1}_{i-1/2,j}\left( \delta_x\vec{\bm I}^n_{i,j}-\delta_x\vec{\bm I}^n_{i+1,j}\right)
+  \left(\tilde{b}^{n+1}_{i-1/2,j} -\tilde{b}^{n+1}_{i+1/2,j}\right)\delta_x\vec{\bm I}^n_{i+1,j}}{\Delta x}\Bigg|.
\end{eqnarray}
We assume that there are smooth vector functions  $\vec{\bm I}(x,y,t)$ (here we have slightly abused the notation), such that
$$\vec{\bm I}_{i,j}^n = \vec{\bm I}(x_i,y_j,t_n).$$
Substituting $\vec{\bm I}$ and the coefficient function $\widetilde{b}$ into \eqref{Appendix: eq1}, we use the differential mean value
theorem to find that for any component $I_{\ell}^m$ of $\vec{\bm I}$ and $\widetilde{b}$,
there are $(\zeta^1_x)_{\ell}^m, (\zeta^2_x)_{\ell}^m, \zeta_x$, such that
$$
\frac{\delta_x (I_{\ell}^m)^n_{i-1,j}-\delta_x(I_{\ell}^m)^n_{i,j}}{\Delta x} = -\frac{\partial^2 (I_{\ell}^m)^n((\zeta_x^1)_{\ell}^m, y_j) }{\partial x^2},$$
$$
\frac{\delta_x(I_{\ell}^m)^n_{i,j}-\delta_x(I_{\ell}^m)^n_{i+1,j}}{\Delta x} = -\frac{\partial^2 (I_{\ell}^m)^n((\zeta_x^2)_{\ell}^m, y_j) }{\partial x^2}, $$
$$
\frac{\tilde{b}^{n+1}_{i-1/2,j} -\tilde{b}^{n+1}_{i+1/2,j}}{\Delta x} = -\frac{\partial\tilde{b}^{n+1}(\zeta_x,y_j)}{ \partial x}.
$$
 For convenience, denote
$$
\frac{\partial^2 \vec{\bm I}^n(\zeta_x^1, y_j)}{\partial x^2}=
\left(\frac{\partial^2 (I_{0}^0)^n((\zeta_x^1)_{0}^0, y_j) }{\partial x^2},\frac{\partial^2 (I_{1}^{-1})^n((\zeta_x^1)_{1}^{-1}, y_j) }{\partial x^2},..., \frac{\partial^2 (I_{N}^N)^n((\zeta_x^1)_{N}^N, y_j) }{\partial x^2}\right)',
$$
and analogously for $\frac{\partial^2 \vec{\bm I}^n(\zeta_x^2, y_j)}{\partial x^2}$.

Therefore, one has
\begin{eqnarray}\nonumber
&&\Bigg| \frac{ \Phi^{n+1}_{i-1/2,j}(\tilde{b})- \Phi^{n+1}_{i+1/2,j}(\tilde{b}) }{\Delta x} \Bigg|
\\\nonumber
&=&\Bigg|
{\bm\langle}\xi^2\vec{\bm \psi} {\bm\rangle}_{1,4} \cdot
\left(- \tilde{b}^{n+1}_{i-1/2,j}
  \frac{\partial^2 \vec{\bm I}^n(\zeta_x^1, y_j) }{\partial x^2}
- \frac{ \partial\tilde{b}^{n+1}(\zeta_x,y_j)}{ \partial x}  \delta_x\vec{\bm I}^n_{i,j}
\right)
\\\nonumber
& &\qquad +
 {\bm\langle}\xi^2\vec{\bm \psi} {\bm\rangle}_{2,3} \cdot \left( -
  \tilde{b}^{n+1}_{i-1/2,j}
  \frac{\partial^2 \vec{\bm I}^n(\zeta_x^2, y_j) }{\partial x^2}
- \frac{\partial\tilde{b}^{n+1}(\zeta_x,y_j)}{ \partial x}  \delta_x\vec{\bm I}^n_{i+1,j}
\right)\Bigg|
\\\nonumber
&\lesssim&
\left |\tilde{b}^{n+1}_{i-1/2,j}\right| + \left|\frac{\partial\tilde{b}^{n+1}(\zeta_x,y_j)}{ \partial x} \right|.
\end{eqnarray}
In the same manner, we can obtain
\begin{eqnarray}\nonumber
\Bigg| \frac{ \Phi^{n+1}_{i,j-1/2}(\tilde{b})- \Phi^{n+1}_{i,j+1/2}(\tilde{b}) }{\Delta y} \Bigg|
&\lesssim&
\left |\tilde{b}^{n+1}_{i,j-1/2}\right| + \left|\frac{\partial\tilde{b}^{n+1}(x_i,\zeta_y)}{ \partial y} \right|.
\end{eqnarray}

Now, it remains to analyze $\widetilde{b}, \frac{\partial \widetilde{b} }{\partial x}, \frac{\partial \widetilde{b} }{\partial y}$.
After a straightforward calculation, one obtains
\begin{eqnarray}\nonumber
\frac{\partial \widetilde{b}}{\partial x}
= \frac{2c^2}{\epsilon^2 \Delta t \nu^3} \frac{\partial \nu}{\partial x}
\left( 1- e^{-\nu\Delta t} -\nu\Delta t e^{-\nu\Delta t}  \right) -
\frac{c^2\Delta t}{\epsilon^2\nu} \frac{\partial \nu}{\partial x} e^{-\nu\Delta t},
\end{eqnarray}
where we recall $\nu=\frac{c\sigma}{\epsilon^2}$. Similarly, we have an identity for $\frac{\partial \widetilde{b}}{\partial y}$.

Considering the regime $\epsilon \ll 1$ in which $\sigma=O(1)$, we can verify that
$$ |\widetilde{b} | = O( \frac{\epsilon^2}{\Delta t}), \qquad\quad
\Big| \frac{\partial \widetilde{b} }{\partial x} \Big|\ll \epsilon^2,\qquad\quad
\Big| \frac{\partial \widetilde{b} }{\partial y} \Big|\ll \epsilon^2. $$
On the other hand, for the regime $\epsilon = O(1)$, by the Taylor expansion, we have
$$ \widetilde{b}=O(\Delta t), \qquad \frac{\partial \widetilde{b} }{\partial x}=0
~\text{or}~ O( (\Delta t)^2),\qquad
\frac{\partial \widetilde{b} }{\partial y}=0
~\text{or}~ O( (\Delta t)^2). $$
Putting the above analyses together and noticing that $\epsilon \ll \Delta t$, we obtain \eqref{eq1-analysis of fluxes} immediately.
This completes the proof of inequality \eqref{eq1-analysis of fluxes}.
\vspace{2mm}

{\bf Proof of \eqref{eq2-analysis of fluxes}:}
\vspace{2mm}

In view of the expressions of $\Phi_{i-1/2,j}(\tilde{b})$ and $G_{i-1/2,j}(\tilde{b})$, one finds that
the proof of the inequality \eqref{eq2-analysis of fluxes} is almost the same as that of \eqref{eq1-analysis of fluxes},
and hence, we omit it here.
\vspace{2mm}

{\bf Proof of \eqref{eq3-analysis of fluxes} and \eqref{eq4-analysis of fluxes}:}
\vspace{2mm}

A straightforward calculation leads to
\begin{eqnarray} \label{Appendix: eq2}
\nonumber
& &\Bigg|\frac{ G_{i-1/2,j}(\tilde{c})- G_{i+1/2,j}(\tilde{c}) }{\Delta x}
\Bigg| \\\nonumber
&=&
\Bigg| \frac{\tilde{c}^{n+1}_{i-1/2,j}{\bm\langle}\xi \psi_{\ell}^m {\bm\rangle}~
\left( (\kappa^{n+1}\phi^n)_{i-1/2,j} +(1-\kappa_{i-1/2,j}^{n+1})\rho_{i-1/2,j}^{n+1} \right) }{\Delta x}
\\\nonumber
& &\qquad -
 \frac{\tilde{c}^{n+1}_{i+1/2,j}{\bm\langle}\xi \psi_{\ell}^m {\bm\rangle}~
\left( (\kappa^{n+1}\phi^n)_{i+1/2,j} +(1-\kappa_{i+1/2,j}^{n+1})\rho_{i+1/2,j}^{n+1} \right) }{\Delta x}\Bigg|
\\\nonumber
&=& \Bigg|  {\bm\langle}\xi \psi_{\ell}^m {\bm\rangle}~\left\{ \frac{\tilde{c}^{n+1}_{i-1/2,j} (\kappa^{n+1}\phi^n)_{i-1/2,j} - \tilde{c}^{n+1}_{i+1/2,j} (\kappa^{n+1}\phi^n)_{i+1/2,j}   }{\Delta x}\right. \\
& &\left. \qquad + \frac{ \tilde{c}^{n+1}_{i-1/2,j}(1-\kappa_{i-1/2,j}^{n+1})\rho_{i-1/2,j}^{n+1} - \tilde{c}^{n+1}_{i+1/2,j}(1-\kappa_{i+1/2,j}^{n+1})\rho_{i+1/2,j}^{n+1}  }{\Delta x}
\right\}
\Bigg|.
\end{eqnarray}
We suppose that there are smooth functions  $\phi(x,y,t), \rho(x,y,t)$, such that
$$\phi_{i,j}^n = \phi(x_i,y_j,t_n), \qquad\qquad \rho_{i,j}^n = \rho(x_i,y_j,t_n)  .$$
Substituting $\phi, \rho$ and the coefficient function $\widetilde{c}$ into \eqref{Appendix: eq2}, using the differential
mean value theorem, we see that there are $\zeta^1_x, \zeta^2_x$, such that
\begin{eqnarray}
\nonumber
& &\Bigg|\frac{ G_{i-1/2,j}(\tilde{c})- G_{i+1/2,j}(\tilde{c}) }{\Delta x}
\Bigg| \\\nonumber
&=& \Bigg| {\bm\langle}\xi \psi_{\ell}^m {\bm\rangle}~
 \frac{\partial (\tilde{c}^{n+1}\kappa^{n+1}\phi^n)}{\partial x} (\zeta^1_x, y_j) +
 \frac{\partial \left(\tilde{c}^{n+1}(1-\kappa^{n+1})\rho^{n+1} \right) }{\partial x}  (\zeta^2_x, y_j) \Bigg|
 \\\nonumber
 &\lesssim&
 \max_{k=1,2}
\left |\tilde{c}^{n+1}(\zeta^k_x, y_j)\right| + \max_{k=1,2} \left|\frac{\partial\tilde{c}^{n+1}}{ \partial x} (\zeta^k_x, y_j) \right|.
\end{eqnarray}
Similarly,
\begin{eqnarray}
\nonumber
\Bigg|\frac{ G_{i,j-1/2}(\tilde{c})- G_{i,j+1/2}(\tilde{c}) }{\Delta y}
\Bigg|
 &\lesssim&
 \max_{k=1,2}
\left |\tilde{c}^{n+1}(x_i, \zeta^k_y)\right| + \max_{k=1,2} \left|\frac{\partial\tilde{c}^{n+1}}{ \partial y} (x_i, \zeta^k_y) \right|.
\end{eqnarray}

Now, we focus on estimating $\widetilde{c}, \frac{\partial \widetilde{c} }{\partial x}$ and $\frac{\partial \widetilde{c} }{\partial y}$.
After a straightforward calculation, one has
\begin{eqnarray}
\nonumber
\frac{\partial \widetilde{c} }{\partial x}
&=& \left(  \frac{1}{2\pi\epsilon^3\nu} -\frac{1}{2\pi\Delta t \epsilon^3 \nu^2}
(1-e^{-\nu\Delta t})  \right) \frac{\partial \sigma_s}{\partial x} \\\nonumber
& & + \left( \frac{\sigma_s}{\pi\Delta t\epsilon^3\nu^3}(1-e^{-\nu\Delta t})
-  \frac{\sigma_s}{2\pi\epsilon^3\nu^2}(1+e^{-\nu\Delta t}) \right)  \frac{\partial \nu}{\partial x}.
\end{eqnarray}
A similar identity for $\frac{\partial \widetilde{c} }{\partial y}$ can be obtained in the same manner.

Consider the regime $\epsilon \ll 1$ where $\sigma=O(1)$. When $\sigma =$const., we see that
$\frac{\partial \sigma}{\partial x}=0$; otherwise, $\frac{\partial \sigma}{\partial x}=O(1)$. Thus, it is easy to verify that
$$ \widetilde{c} = O(\frac{1}{\epsilon}), \qquad \frac{\partial \widetilde{c} }{\partial x}= 0 ~\text{or}~ O(\frac{1}{\epsilon}),\qquad
\frac{\partial \widetilde{c} }{\partial y} =0 ~\text{or}~ O(\frac{1}{\epsilon}). $$
On the other hand, for the regime $\epsilon=O(1)$, we employ the Taylor expansion and argue similarly to arrive at
$$ \widetilde{c}= O(\Delta t), \qquad  \frac{\partial \widetilde{c} }{\partial x}= 0 ~\text{or}~  O(\Delta t),\qquad
\frac{\partial \widetilde{c} }{\partial y}= 0 ~\text{or}~ O(\Delta t). $$
Hence, recalling $\epsilon\ll\Delta t$, we get \eqref{eq3-analysis of fluxes} and \eqref{eq4-analysis of fluxes}.
This completes the proof of Lemma \ref{lemma: analysis of fluxes}.


\end{document}